\documentclass[11pt,a4paper]{article}
\usepackage[colorlinks]{hyperref}
\usepackage[latin1]{inputenc}
\usepackage{amsmath}
\usepackage{amsfonts}
\usepackage{amssymb}
\usepackage{theorem}
\usepackage{subfig}
\usepackage{graphicx}
\usepackage{xspace}
\usepackage{tikz}
\usepackage{fullpage}
\providecommand{\figwidth}{\textwidth}
\newcommand{\figscale}{1.1}
\newcommand{\rP}{\ensuremath{\mathbb P}\xspace}

\newcommand{\poly}[1]{\ensuremath{\rP}^{#1}}

\newcommand{\ndg}[1]{| \kern -.25mm \|{#1}| \kern -.25mm \|}

\newcommand{\ltwo}[2]{\|{#1}\|_{#2}}

\newcommand{\el}{ T \in \mathcal{T} }
\newcommand{\ud}{\mathrm{d}}

\newcommand{\norm}[2]{\|#1\|_{#2}}
\newcommand{\snorm}[2]{|#1|_{#2}}
\newcommand{\dint}{\text{\rm int}}
\newcommand{\mbf}[1]{\mbox{\boldmath$\rm{#1}$}}
\newcommand{\mean}[1]{ \{#1\} }
\newcommand{\jump}[1]{  [#1]  }
\newcommand{\ujump}[1]{  \lfloor #1\rfloor  }

\newcommand{\qp}[1]{\ensuremath{\!\left({#1}\right)}}
\newcommand{\rR}{\ensuremath{\mathbb R}\xspace}
\newcommand{\reals}{\rR}
\renewcommand{\vec}[1]{\ensuremath{\boldsymbol{#1}}}
\newcommand{\Forall}{\:\forall\:}

\newcommand{\Foreach}{\quad\Forall}
\newcommand{\transpose}{{\boldsymbol\intercal}}   % transpose symbol
\newcommand{\cE}{\mathcal E}

\DeclareMathOperator{\diam}{diam}

\newtheorem{corollary}{Corollary}[section]
\newtheorem{lemma}[corollary]{Lemma}
\newtheorem{theorem}[corollary]{Theorem}

\newtheorem{remark}[corollary]{Remark}
\newtheorem{example}[corollary]{Example}
\newtheorem{assumption}[corollary]{Assumption}

\newcommand{\qed}{ \vspace{-0.5cm} \hfill $\Box$ }
\newenvironment{proof}[1][Proof.]{\begin{trivlist}
\item[\hskip \labelsep {\bfseries #1}]}{\end{trivlist}\qed}

\begin{document}
\title{Recovered Finite Element Methods}
\author{
  Emmanuil H.~Georgoulis\thanks{
   Department of Mathematics,
    University of Leicester,
    University Road,
    Leicester LE1 7RH,
    UK and School of Applied Mathematical and Physical Sciences, National Technical University of Athens, Zografou 15780, Greece
    {\tt{Emmanuil.Georgoulis@le.ac.uk}}.
    }\and  Tristan Pryer\thanks{
     Department of Mathematics and Statistics,
     University of Reading,
     Whiteknights,
     PO Box 220,
     Reading RG6 6AX,
     UK
  {\tt{T.Pryer@reading.ac.uk}}.
}}
\date{\today}

\maketitle

\begin{abstract}
\noindent
We introduce a family of Galerkin finite element methods which are constructed via recovery operators over element-wise discontinuous approximation spaces. This new family, termed collectively as \emph{recovered finite element methods (R-FEM)} has a number of attractive features over both classical finite element and discontinuous Galerkin approaches, most important of which is its potential to produce stable conforming approximations in a variety of settings. Moreover, for special choices of recovery operators, R-FEM produces the same approximate solution as the classical conforming finite element method, while, trivially, one can recast (primal formulation) discontinuous Galerkin methods.  A priori error bounds are shown for linear second order boundary value problems, verifying the optimality of the proposed method. Residual-type a posteriori bounds are also derived, highlighting the potential of R-FEM in the context of adaptive computations. Numerical experiments highlight the good approximation properties of the method in practice. A discussion on the potential use of R-FEM in various settings is also included.
\end{abstract}

\section{Introduction}
Galerkin procedures are extremely popular in numerical approximation of solutions to initial and/or boundary value problems for partial differential equations (PDEs). The most used families of Galerkin procedures are the finite element (FEM) and, more recently, discontinuous Galerkin (dG) families of methods. Roughly speaking, FEM are attractive for their simplicity and robustness, especially in structural mechanics and heat flow simulations, owing to their variational interpretation and origins; dG methods, on the other hand, are popular in fluid flow and fast convection/transport simulations, due to their superior numerical stability properties, stemming from the ability to incorporate general numerical flux functions seamlessly.

FEM typically incorporate (approximate) continuity/conformity of the state variable(s) and/or of some moments directly into the finite element space in order to imitate the respective properties of the underlying continuous problem. As a result, FEM's approximation capabilities have to be assessed for each choice of finite element spaces. At the other end of the spectrum, dG methods typically employ element-wise discontinuous approximation spaces whose approximation properties are clear; the continuity/conformity of the state variable(s) and/or of some moments is enforced weakly via numerical flux functions incorporated in the variational formulation of the dG method. Consequently, the approximation capabilities of dG schemes is only dependent on the ability to show C\'ea-type quasi-optimality results, i.e., relies on the structure of the respective weak formulation of the dG method. This enables the construction of stable dG methods in a number of settings, e.g., hyperbolic and/or convection-dominated problems \cite{reedhill,MR58:31918,MR88b:65109, MR92e:65128, hss}, and locking-free approximations for elasticity \cite{MR1886000,MR2027288} (see also \cite{MR1140646,MR1343077} for similar ideas in the context of classical non-conforming methods).

In an effort to combine the ``generic'' approximation-space capabilities of dG methods while retaining the attractive conformity or near-conformity of the discretizations (enjoyed normally in FEM), we introduce a family of Galerkin finite element-type methods which are constructed via (conforming or classical non-conforming) recovery operators applied to element-wise discontinuous approximation spaces. This family will be termed collectively as \emph{recovered finite element methods (R-FEM)} and has a number of attractive features over both FEM and dG approaches. 

More specifically, R-FEM combines completely discontinuous local finite element-type spaces, resulting, nonetheless, to conforming FEM-like approximations, or classical non-conforming ones, e.g., of Crouzeix-Raviart type. To fix ideas, let $\mathcal{E}:V_h\to \tilde{V}_h\cap H^1_0(\Omega)$ an operator mapping a \emph{discontinuous} piecewise polynomial space $V_h$ over a triangulation onto a space of \emph{continuous} piecewise polynomial space $\tilde{V}_h\cap H^1_0(\Omega)$ over the same or a finer triangulation; such \emph{recovery} operators $\mathcal{E}$ can be constructed locally, e.g., by (weighted) averaging of the nodal degrees of freedom \cite{KP,MR0400739}. We can now consider the method: find $u_h\in V_h$, such that
\[
\int_{\Omega}\nabla \mathcal{E}(u_h)\cdot \nabla \mathcal{E}(v_h)\ud x + s(u_h,v_h) = \int_{\Omega}f\mathcal{E}(v_h)\ud x, \quad\text{ for all } v_h\in V_h,
\]
to solve the Poisson problem with homogeneous essential boundary conditions, $f\in H^{-1}(\Omega)$, for suitable \emph{stabilization} $s(\cdot,\cdot):W_h\times W_h\to\mathbb{R}$, where $W_h\subset V_h$ such that $V_h=W_h \oplus (V_h\cap H^1_0(\Omega))$. We observe that, despite using element-wise discontinuous polynomial test space $V_h$, the method also produces simultaneously a conforming approximation $\mathcal{E}(u_h)$. We note that the above method yields, in general, different numerical solutions to those one would get by postprocessing standard dG approximations via the recovery operator $\mathcal{E}$. On the other hand, as we shall see below, we can also retrieve classical conforming FEM approximations from R-FEM, by making specific choices of recovery operators $\mathcal{E}$. Therefore, in a sense R-FEM is both a generalisation and a variant of known finite element methods.

The above basic example is intended to highlight a number of attractive features for R-FEM: conformity is not hard-wired in the approximation spaces and there is considerable flexibility in the particular choice of: (a) the recovery operator $\mathcal{E}$; (b) the finite element space  $\mathcal{E}(V_h)$; and, (c) the stabilisation $s$ used. An important property of R-FEM is the extreme flexibility in the choice and nature of degrees of freedom to be recovered locally (e.g., nodal values in one element, normal fluxes or higher moments in another, etc.) while \emph{avoiding} the often cumbersome proof of respective unisolvency of the global linear system. Moreover, a crucial practical attribute of R-FEM is that it can be implemented in a rather straightforward manner into existing conforming or non-conforming finite element implementations, as we shall discuss below.

The use of element-wise discontinuous polynomial spaces in standard dG methods is often found to increase the number of numerical degrees of freedom on a given mesh, without achieving better approximation per degree of freedom. Generally speaking, this is the case for low order approximation spaces. Recent work has shown that this is \emph{not} necessarily the case for higher order polynomial degrees and/or $hp$-version dG  methods \cite{DGpoly1,DGpoly2,DGpolyparabolic}, once appropriate choices of local basis are determined, e.g., using local basis of \emph{total} polynomial degree on box-type or, generally, polygonal/polyhedral elements. Since the use of total degree bases on such meshes necessitates relaxation of conformity requirements for the approximation spaces, dG methods have been used in this context as the underlying discretization. It is evident that R-FEM can be naturally extended to accept such reduced approximation spaces, once a suitable choice of recovery is constructed; we refer to the forthcoming work \cite{REMpoly} for the construction of R-FEM for general polygonal/polyhedral element shapes. Also, recalling that R-FEM is based on discontinuous approximation spaces also, R-FEM is expected to be able to produce stable conforming approximations in the context of convection-dominated problems. A numerical investigation in this direction is presented in the numerical experiments below.

The idea of variational methods involving some form of recovery, e.g., as (enhanced) gradient approximation, has appeared in various forms in the literature, especially in the context of dG methods, see, e.g., \cite{Lew,HDG,BuffaOrtner,Ern}. All these approaches, however, recover the gradient of the numerical solution, through the solution of additional local (or global) weak problems.  At the other end of the spectrum, the recent framework of virtual element methods (VEMs) \cite{virtual} applied to meshes with polygonal/polyhedral element shapes, use non-polynomial basis functions to ensure conformity, which are not computed fully in practice. Indeed, the local VEM spaces are constructed so that they contain a, typically discontinuous, polynomial subspace ensuring optimal approximation; the approximate solution is then computed over this subspace and over the mesh skeleton.

As a first work introducing the recovered finite element method, we confine ourselves to its definition for linear elliptic problems on simplicial and/or box-type triangulations (Section \ref{sec:rfem}). After discussing its relation on known, popular methods (Section \ref{sec:fem}), we prove a priori bounds (Section \ref{sec:apriori}) showing the optimality of the proposed method with respect to meshsize $h$, as well as residual-type a posteriori error bounds (Section \ref{sec:apost}). We also investigate carefully the lowest order case, whereby the underlying discontinuous space $V_h$ consists of element-wise constant functions recovered into conforming linear elements on a triangular mesh; the resulting method is shown to converge optimally in the energy norm. We investigate a number of implementation and conditioning issues for the proposed method, assessing its competitiveness against FEM and dG in terms of complexity (Section \ref{sec:compl}).  Numerical experiments highlighting the good approximation properties of the method in practice and comparisons against FEM and dG are performed in Section \ref{sec:numerics}. Finally, we conclude by discussing a number of currently developed and future extensions of the R-FEM framework to complex problems.

Throughout this work we denote the standard Lebesgue spaces by $L^p(\omega)$, $1\le p\le \infty$, $\omega\subset\mathbb{R}^d$, $d=2,3$,
with corresponding norms $\|\cdot\|_{L^p(\omega)}$;
the norm of $L^2(\omega)$ will be denoted by $\ltwo{\cdot}{\omega}$ for brevity. Let also
$H^s(\omega)$, be the Hilbertian Sobolev
space of index $s\in\mathbb{R}$ of real-valued functions defined on
$\omega\subset\mathbb{R}^d$, constructed via standard interpolation and/or duality procedures, along with the corresponding norm and seminorm
$\norm{\cdot}{s,\omega}$ and $\snorm{\cdot}{s,\omega}$, respectively. We also denote by $H^1_0(\omega)$ the space of functions in $H^1(\omega)$ with vanishing trace on $\partial \omega$.

For $\Omega$ a bounded open polygonal domain in $\mathbb{R}^d$, $d=2,3$,
with $\partial\Omega$ denoting its boundary, we consider the elliptic problem
\begin{equation}\label{pde}
  -\nabla\cdot A\nabla u=f\quad\text{in } \Omega,
\end{equation}
where $f\in H^{-1}(\Omega)$, for some uniformly positive definite diffusion tensor $A\in [L^{\infty}(\Omega)]^{d\times d}$.
For simplicity of the presentation only, we impose homogeneous essential boundary
conditions $u=0$ on $\partial\Omega$, although this is by no means an essential restriction on what follows. Moreover, we shall also briefly consider a linear convection-diffusion problem in Section \ref{sec:numerics} to highlight the versatility of the proposed framework.

\section{Recovered finite element method}
\label{sec:rfem}
%\subsection{Meshes, finite element spaces and trace operators}\label{mesh_assumptions}
Let $\mathcal{T}$ be a regular subdivision of $\Omega$ into
disjoint simplicial or box-type (quadrilateral/hexahedral) elements $\el$%; extensions to polygonal/polyhedral elements, possibly containing hanging nodes/edges, will be discussed later.
. We assume that the subdivision $\mathcal{T}$ is shape-regular (see, e.g., p.124 in
\cite{ciarlet}), that $\bar{\Omega}=\cup_{\el}\bar{T}$
and that the elemental faces are straight line (for $d=2$) or planar (for $d=3$) segments; these will be, henceforth, referred to as \emph{facets}. 
By $\Gamma$ we shall denote the union of all ($d-1$)-dimensional facets associated with the subdivision $\mathcal{T}$ including the boundary. Further, we set $\Gamma_{\dint}:=\Gamma\backslash\partial\Omega$.

For a nonnegative integer $r$, we denote the set of all polynomials of
total degree at most $r$ by $\mathcal{P}_r(T)$, while the set of all tensor-product polynomials on $T$ of degree at most $r$
in each variable is denoted by $\mathcal{Q}_r(T)$.
For $r \geq 1$, we consider the finite element space
\begin{equation}
 V_h^r  :=\{v\in L^2(\Omega):v|_{T}
    \in\mathcal{R}_{r}(T),\,\el\},
\end{equation}
where $\mathcal{R}_{r}(T)\in\{\mathcal{P}_{r}(T), \mathcal{Q}_{r}(T)\}$. We stress that, in this context, we can consider local bases in $\mathcal{P}_r(T)$ also for box-type elements; we shall return to this point below, cf., \cite{DGpoly1,DGpoly2,DGpolyparabolic}.

Further, let $T^+$, $T^-$ be two (generic) elements sharing a facet
$e:=\partial T^+\cap\partial T^-\subset\Gamma_{\dint}$ with respective outward normal unit vectors $\mbf{n}^+$ and $\mbf{n}^-$ on $e$. For a function $v:\Omega\to\mathbb{R}$ that may be discontinuous across $\Gamma_{\dint}$,
we set  $v^+:=v|_{e\subset\partial T^+}$, $v^-:=v|_{e\subset\partial T^-}$, and we define the jump by
\[ \jump{v}:=q^+\mbf{n}^++q^-\mbf{n}^-;\]
if $e\in \partial T\cap\partial\Omega$, we set $\jump{v}:=v^+\mbf{n}$. Also, we define $h_{T}:=\diam(T)$ and we collect them into the
element-wise constant function ${\bf h}:\Omega\to\mathbb{R}$,
with ${\bf h}|_{T}=h_{T}$, $\el$, ${\bf h}|_e=(h_T+h_{T'})/2$ for $e\subset\Gamma_{\dint}$ and ${\bf h}|_e=h_T$ for $e\subset\partial T\cap\partial\Omega$. We assume that the families of meshes considered in this work are locally
quasi-uniform.

For the definition of the proposed method, we require \emph{recovery operators} of the form 
\begin{equation}\label{recovery}
\mathcal{E}:V_h^r\to   H^1_0(\Omega)\cap V_h^s,
\end{equation}
 for some non-negative integer $r$, mapping element-wise discontinuous functions into functions in the solution space for the boundary value problem, for some $s,r\in\mathbb{N}\cup\{0\}$, or respective non-conforming recovery
\begin{equation}
\mathcal{E}:V_h^r\to  V_{nc},
\end{equation} for some classical non-conforming finite element space $V_{nc}$, e.g., Crouzeix-Raviart elements. Evidently, there is considerable flexibility in the choice of such recovery operators; indeed, various choices of $\mathcal{E}$ may give rise to a different methods. 

For recovery operator $\mathcal{E}:V_h^r\to \widetilde{V}$, with $\widetilde{V}\in \{H^1_0(\Omega)\cap V_h^s, V_{nc}\}$, we consider the \emph{recovered finite element method} reading: find $u_h\in V_h^r$ such that
\begin{equation}\label{rem}
B(w_h,v_h):=a( \mathcal{E}(w_h),\mathcal{E}(v_h)) + s_h(w_h,v_h) = \ell (\mathcal{E}(v_h)),\qquad\text{for all } v_h\in \widetilde{V},
\end{equation}
where
\begin{equation}\label{REM_bilinear}
a(w,v):= \sum_{T\in\mathcal{T}}\int_{T} A\nabla w\cdot \nabla v\,\ud x,\quad\text{for all } w,v\in \prod_{T\in\mathcal{T}}H^1(T),
\end{equation}
and
\[
\ell (v):=\int_{\Omega} f v\,\ud x,\quad\text{for all } v\in \prod_{T\in\mathcal{T}}H^1(T),
\]
with $s_h(\cdot,\cdot):V_h^r\times V_h^r\to \mathbb{R}$ a symmetric bilinear form, henceforth referred to as the \emph{stabilisation}; possible choices for $s_h$, resulting, in general, to different methods, will be given below. As we shall see below, specific choices of $\mathcal{E}$ and of $s_h$ can lead to the same recovered solutions produced by both classical FEM and dG methods. 

Recovery operators of the form \eqref{recovery} have appeared in various settings in the theory of finite element methods, e.g., \cite{MR0400739,MR1011446,MR1248895, KP,brenner,GHV}. They are typically used to recover a conforming function from a non-conforming one under minimal regularity requirements. 

Classical examples include the construction of the so-called Cl\'ement or Scott-Zhang operators \cite{MR0400739,MR1011446}, based on local averages of functions. Note that the classical Cl\'ement construction yields a quasi-interpolant, i.e., it does \emph{not} necessarily preserve continuous functions.

Another popular example is the nodal \emph{averaging operator} for which the following stability result was proven by Karakashian and Pascal in \cite{KP}.
\begin{lemma} \label{lemma_E1}
Let  $\mathcal{T}$ a locally quasi-uniform mesh. The operator $\mathcal{E}_s:V_h^s\to V_h^s\cap H^1_0(\Omega)$, defined on the conforming Lagrange nodes $\nu\in\mathcal{N}$, $\mathcal{N}$ denoting the set of all Lagrange nodes of $V_h^s$, by:
 \[
 \mathcal{E}_s(v)(\nu):= \bigg \{ \begin{array}{cc}
 \displaystyle|\omega_{\nu}|^{-1}\sum_{T\in\omega_{\nu}} v|_{T}({\nu}),& \nu\in\Omega; \\ 
 0, & \nu\in \partial\Omega,
 \end{array} 
 \]
with
 $
 \omega_{\nu}:=\bigcup_{T\in\mathcal T: \nu\in\bar{T}}T, 
$
the set of elements sharing the node $\nu\in\mathcal{N}$ and $|\omega_{\nu}|$ their number. Then, the following bound  holds
\begin{equation}\label{KP_stab}
\sum_{T\in\mathcal{T}} \snorm{v - \mathcal{E}_s(v)}{\alpha,T}^2 \leq C_{KP,|\alpha|} \norm{\mbf{h}^{1/2-\alpha}\jump{v}}{\Gamma}^2,
\end{equation}
with  $|\alpha|=0,1$, $C_{|\alpha|}\equiv C_{|\alpha|}(r)>0$ a constant independent of $\mbf{h}$, $v$ and $\mathcal{T}$, but depending on the shape-regularity of $\mathcal{T}$ and on the polynomial degree $s$.
\end{lemma}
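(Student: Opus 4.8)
The plan is to prove \eqref{KP_stab} element by element, first reducing the broken seminorm of the nonconforming remainder $v-\mathcal{E}_s(v)$ to pointwise discrepancies of nodal values, and then controlling those by jumps of $v$ across the facets meeting each node. One checks first that $\mathcal{E}_s(v)$ is well defined and lies in $V_h^s\cap H^1_0(\Omega)$: the averaged nodal data are single-valued, and since the Lagrange nodes are unisolvent for $\mathcal{R}_s$ and those lying on a facet determine the trace there, the one-sided traces of $\mathcal{E}_s(v)$ agree across every facet, while boundary nodes carry the value $0$. For the reduction, fix $\el$; then $v-\mathcal{E}_s(v)$ restricted to $T$ lies in $\mathcal{R}_s(T)$, so expanding it in the Lagrange basis, mapping to the reference element and invoking shape-regularity together with equivalence of norms on the finite-dimensional space $\mathcal{R}_s(T)$ gives
\[
\snorm{v-\mathcal{E}_s(v)}{\alpha,T}^2\le C\,h_T^{d-2\alpha}\sum_{\nu\in\mathcal{N}\cap\bar T}\bigl|(v|_T)(\nu)-\mathcal{E}_s(v)(\nu)\bigr|^2,\qquad|\alpha|=0,1,
\]
where here and below $C$ denotes a generic constant depending only on $d$, $s$ and the shape-regularity of $\mathcal{T}$.

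Next I would bound each nodal discrepancy by jumps. For an interior node $\nu\in\Omega$, the definition of $\mathcal{E}_s$ yields
\[
(v|_T)(\nu)-\mathcal{E}_s(v)(\nu)=\frac1{|\omega_\nu|}\sum_{T'\in\omega_\nu}\bigl((v|_T)(\nu)-(v|_{T'})(\nu)\bigr).
\]
By shape-regularity $|\omega_\nu|$ is bounded and, with edges given by pairs of elements sharing a facet whose closure contains $\nu$, the adjacency graph on $\omega_\nu$ is connected of uniformly bounded diameter; telescoping along a path joining $T$ to $T'$, each increment equals, up to sign, the scalar jump $\jump{v}|_e(\nu)\cdot\mbf{n}$ across the shared facet $e$, whence $\bigl|(v|_T)(\nu)-(v|_{T'})(\nu)\bigr|\le\sum_{e}\bigl|\,\jump{v}|_e(\nu)\,\bigr|$ over the boundedly many interior facets $e$ with $\nu\in\bar e$. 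If $\nu\in\partial\Omega$ then $\mathcal{E}_s(v)(\nu)=0$; picking a boundary facet $e^\ast\subset\partial\Omega$ with $\nu\in\bar e^\ast$, lying in an element $T^\ast$, we split $(v|_T)(\nu)=\bigl((v|_T)(\nu)-(v|_{T^\ast})(\nu)\bigr)+(v|_{T^\ast})(\nu)$, treat the bracket by the same chaining inside $\omega_\nu$, and use $\bigl|(v|_{T^\ast})(\nu)\bigr|=\bigl|\,\jump{v}|_{e^\ast}(\nu)\,\bigr|$ coming from the boundary convention $\jump{v}=v^+\mbf{n}$. In both cases a Cauchy--Schwarz step over the boundedly many facets at $\nu$ gives
\[
\bigl|(v|_T)(\nu)-\mathcal{E}_s(v)(\nu)\bigr|^2\le C\sum_{e:\,\nu\in\bar e}\bigl|\,\jump{v}|_e(\nu)\,\bigr|^2 .
\]

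Finally I would pass from point values to facet $L^2$-norms and sum. Since $\jump{v}|_e$ is a vector multiple of a polynomial of degree at most $s$ on the $(d-1)$-dimensional facet $e$, scaling to the reference facet and norm equivalence give $\bigl|\,\jump{v}|_e(\nu)\,\bigr|^2\le C\,h_e^{-(d-1)}\ltwo{\jump{v}}{e}^2$, and $h_e\simeq h_T$ by local quasi-uniformity, so $h_T^{d-2\alpha}h_e^{-(d-1)}\le C\,h_e^{1-2\alpha}$. Combining the three displays,
\[
\snorm{v-\mathcal{E}_s(v)}{\alpha,T}^2\le C\sum_{e:\,\bar e\cap\bar T\neq\emptyset} h_e^{1-2\alpha}\ltwo{\jump{v}}{e}^2 ,
\]
and summing over $\el$, each facet $e$ being charged only to the boundedly many elements $T$ with $\bar T\cap\bar e\neq\emptyset$, we reach
\[
\su\snorm{v-\mathcal{E}_s(v)}{\alpha,T}^2\le C\sum_{e\subset\Gamma}h_e^{1-2\alpha}\ltwo{\jump{v}}{e}^2=C\,\norm{\mbf{h}^{1/2-\alpha}\jump{v}}{\Gamma}^2 ,
\]
which is \eqref{KP_stab}.

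The one genuinely delicate ingredient is the chaining argument: one must confirm that, uniformly over the shape-regular (and locally quasi-uniform) families considered, the elements sharing a node, or an edge in three dimensions, form a facet-connected cluster of uniformly bounded combinatorial diameter, and keep meticulous track that every constant above depends on $d$, $s$ and shape-regularity alone. Everything else -- the local scaling identities, the inverse and norm-equivalence estimates on polynomial spaces, and the finite-overlap counting of facets against elements -- is routine; the boundary nodes require only the extra additive splitting above and introduce no new idea.
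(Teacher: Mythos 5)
Your argument is correct, and it reconstructs what the paper itself does not spell out: the paper's ``proof'' of this lemma is simply a citation of Karakashian and Pascal \cite{KP}, and your route --- local norm equivalence on $\mathcal{R}_s(T)$ reducing the broken seminorm to nodal discrepancies, telescoping those discrepancies across the facets meeting each node (with the boundary convention handling nodes on $\partial\Omega$), and a facet inverse estimate plus finite-overlap counting to pass to $\norm{\mbf{h}^{1/2-\alpha}\jump{v}}{\Gamma}^2$ --- is essentially the standard argument given in that reference. The one ingredient to keep explicit, as you yourself flag, is the uniform facet-connectivity and bounded cardinality/diameter of the patches $\omega_{\nu}$, which is guaranteed for the regular, shape-regular, locally quasi-uniform meshes assumed here and is exactly where the dependence on shape-regularity (and on $s$, through the norm equivalences) enters the constants.
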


\begin{proof} See Karakashian and Pascal \cite{KP}.
\end{proof}

Note that, since $V_h^r\subset V_h^s$ for $r\le s$, the recovery operator $\mathcal{E}_s$ from Lemma \ref{lemma_E1} is also an operator from $V_h^r$ into $V_h^s\cap H^1_0(\Omega)$ for $0\le r\le s$. 
The bound \eqref{KP_stab} shows, in particular,  that $\norm{\mbf{h}^{-1/2}\jump{v}}{\Gamma}^2$ is a norm on the orthogonal complement $W_h^s$ of $V_h^s\cap H^1_0(\Omega)$ in $V_h^s$ with respect to the $a(\cdot,\cdot)$ inner product. This motivates the following choice for the stabilisation bilinear form:
\begin{equation}\label{stabilisation}
  s_h(w_h,v_h):= \int_{\Gamma}\sigma \jump{w_h}\cdot\jump{v_h}\,\ud s,
\end{equation}
for some non-negative function $\sigma:\Gamma\to\mathbb{R}$, to be defined below, henceforth referred to as \emph{discontinuity-penalization parameter}. In particular, since R-FEM \eqref{rem} is defined on the discontinuous space $V_h^r$, one has to take into account the part of $V_h^r$ that is not in $V_h^s\cap H^1_0(\Omega)$: Lemma \ref{lemma_E1} suggests that, upon defining $s_h$ is in \eqref{stabilisation}, the well-posedness of the method should be expected. Indeed, setting $w_h=v_h=w$ in \eqref{rem} we immediately arrive at the coercivity identity 
\begin{equation}
  \label{eq:coercivity-ident}
  \norm{\sqrt{A}\nabla \mathcal{E}(w)}{\Omega}^2 +\norm{\sqrt{\sigma}\jump{w}}{\Gamma}^2 =B(w,w) 
\quad \forall \  w\in V_h^r.
\end{equation}
With the help, now, of Lemma \ref{lemma_E1}, we can show that the left-hand side of \eqref{eq:coercivity-ident} is a norm in $V_h^r$. To this end, upon checking that $\norm{\sqrt{A}\nabla \mathcal{E}(w)}{\Omega}^2 +\norm{\sqrt{\sigma}\jump{w}}{\Gamma}^2=0$ implies $w=0$ (the other two norm properties being obvious). Indeed, we then have $\nabla\mathcal{E}(w)=0$ in $\Omega$ and $\jump{w}=0$ on $\Gamma$. Since $\mathcal{E}(w)\in H^1_0(\Omega)$, this,  in turn, implies $\mathcal{E}(w)=0$. Also, from Lemma \ref{lemma_E1}, for $\alpha =0$, we have
$w-\mathcal{E}(w)=0$, ensuring, thus, $w=0$.

\begin{remark}
The above argument may also motivate an alternative choice for the stabilisation $s$. In particular, choosing $s_h\equiv \widetilde{s_h}:V_h^r\times V_h^r\to\mathbb{R}$ with
\begin{equation}\label{stabilisation2}
  \widetilde{s_h}(w_h,v_h):=\int_{\Omega}\widetilde{\sigma}\big(w_h-\mathcal{E}(w_h)\big) \big(v_h-\mathcal{E}(v_h)\big)\,\ud x,
\end{equation}
for some non-negative function $\widetilde{\sigma}:\Gamma\to\mathbb{R}$, as we we, then, have the coercivity identity
\begin{equation}
  \norm{\sqrt{A}\nabla \mathcal{E}(w)}{\Omega}^2+\norm{\sqrt{\widetilde{\sigma}}\big(w-\mathcal{E}(w)\big)}{\Omega}^2 = B(w,w) \quad \forall \  w\in V_h^r. 
\end{equation}
\end{remark}

To retain a level of generality in the R-FEM framework presented below, we make the following assumptions on the bilinear form $s_h$, rather than prescribe it specifically.

\begin{assumption}\label{stabilisation_bound}
We assume that the stabilisation $s_h\equiv s_{h,\alpha}$ satisfies the equivalence
\begin{equation}\label{stab_equivalence}
 c_{0} \norm{\mbf{h}^{\alpha-1/2}\jump{v}}{\Gamma}^2\le  s_h(v,v)\le C_0\norm{\mbf{h}^{\alpha-1/2}\jump{v}}{\Gamma}^2,\quad\text{ for all } v\in V_h^r,
\end{equation} for some constant $\alpha\in \mathbb{R}$, and for $c_0,C_{0}>0$, depending only on the local elemental polynomial degree and on the mesh regularity, topology and geometry, but not on the local mesh-size. (This means that any dependence on the local mesh-size $\mbf{h}$ will be implicit in the definition of $s_h$ itself.)
\end{assumption} 
Note that both choices of stabilisation \eqref{stabilisation} and \eqref{stabilisation2} satisfy Assumption \ref{stabilisation_bound} with $\alpha =0$, while \eqref{stabilisation} also satisfies $c_0=C_0=1$.

\begin{assumption}\label{stabilisation_ass2}
We assume that the stabilisation $s_h$ satisfies the bound
\begin{equation}\label{stab_equivalence2}
s_h(w,v) \leq \widetilde{C}_{0} \big(s_h(w,w)\big)^{1/2}\big(s_h(v,v)\big)^{1/2},\quad\text{ for all } w,v\in V_h^r,
\end{equation}  for $\widetilde{C}_{0}>0$, depending only on the local elemental polynomial degree and on the mesh regularity, topology and geometry, but not on the local mesh-size.
\end{assumption} 
It is evident that both choices of stabilisation \eqref{stabilisation} and \eqref{stabilisation2} satisfy Assumption \ref{stabilisation_ass2} with $\widetilde{C}_0=1$.

\begin{remark}\label{remark_local}
In the proof of a posteriori error estimates below, we shall make use of local versions of Assumptions \ref{stabilisation_bound} and \ref{stabilisation_ass2},  applied to subsets $\omega\subset\Omega$ of the computational domain. These can be easily shown to be valid by using the bilinearity of $s_h$ and selecting $supp(v)\subset \omega$, noting that \eqref{stab_equivalence} implies that $s_h(v,v)$ will vanish away from $\omega$.
\end{remark}

\section{Connections to known methods}\label{sec:fem}

Interestingly, employing the nodal averaging recovery operator $\mathcal{E}_s$ in the R-FEM formulation \eqref{rem} on finite element spaces containing conforming subspaces, results to $\mathcal{E}_s(u_h)$ being the approximate solution computed via classical conforming finite element method! Indeed, the boundary value problem in weak form reads: find $u\in H^1_0(\Omega)$ such that $a(u,v)=\ell (v)$ for all $v\in H^1_0(\Omega)$.
Setting $v=\mathcal{E}_s(v_h)$ for a $v_h\in V_h^r$, we deduce
$a(u,\mathcal{E}_s(v_h))=\ell (\mathcal{E}_s(v_h))$,
which, upon subtraction of \eqref{rem}, leads to
\begin{equation}\label{Go}
a(u-\mathcal{E}_s(u_h),\mathcal{E}_s(v_h))=s_h(u_h,v_h), \quad \text{for all } v_h\in V_h^r.
\end{equation} Now, if $V_h^r$ is such that it contains the respective conforming subspace of the same order, setting $v_h\in V_h^r\cap H^1_0(\Omega)$, we have from \eqref{Go}
\begin{equation}\label{gabriel}
a(u-\mathcal{E}_s(u_h),v_h)=0, \quad \text{for all } v_h\in V_h^r\cap H^1_0(\Omega),
\end{equation}
noting, furthermore, that $v_h=\mathcal{E}_s(v_h)$ for $v_h\in V_h^r\cap H^1_0(\Omega)$. Therefore, \eqref{gabriel} implies that $\mathcal{E}_s(u_h) = u_h^{FEM}$ where, $u_h^{FEM}\in V_h^r\cap H^1_0(\Omega)$ is such that
\begin{equation}\label{FEM}
a(u_h^{FEM},v_h)=\ell(v_h), \quad \text{for all } v_h\in V_h^r\cap H^1_0(\Omega).
\end{equation}
Thus, R-FEM for special choices of recovery operators $\mathcal{E}$ and of finite dimensional spaces $V_h^r$ can retrieve the classical FEM solutions (albeit in a wasteful fashion). This is the content of the following result, whose proof is essentially included in the above discussion.
\begin{lemma}\label{RFEMeqFEM}
Let $V_h^r$ over a triangulation is such that it contains the conforming subspace $V_{h,c}^r:= V_h^r\cap H^1_0(\Omega)$ of the same polynomial degree. Assume also that the recovery operator used in \eqref{rem} is such that it preserves conforming functions, i.e., we have $\mathcal{E}(v_h)=v_h$, for all $v_h\in V_{h,c}^r$ and assume also that the stabilisation $s$ is such that $s(w_h,v_h)=0$. Then, $\mathcal{E}(u_h) =u_h^{FEM} $, when $u_h\in V_h^r$ is computed via \eqref{rem} and $u_h^{FEM}\in V_{h,c}^r$ is computed via \eqref{FEM}.
\end{lemma}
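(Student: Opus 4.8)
The plan is to essentially formalize the chain of identities already laid out in the paragraph preceding the statement, taking care to track exactly which hypotheses are used at each step. First I would write down the weak formulation of the boundary value problem: $u\in H^1_0(\Omega)$ with $a(u,v)=\ell(v)$ for all $v\in H^1_0(\Omega)$. Since the recovery operator is conforming, $\mathcal{E}(V_h^r)\subset V_h^s\cap H^1_0(\Omega)\subset H^1_0(\Omega)$, so for any $v_h\in V_h^r$ we may legitimately test with $v=\mathcal{E}(v_h)$, obtaining $a(u,\mathcal{E}(v_h))=\ell(\mathcal{E}(v_h))$. Subtracting the R-FEM equation \eqref{rem}, and using the hypothesis $s_h\equiv 0$, yields the Galerkin-type orthogonality $a(u-\mathcal{E}(u_h),\mathcal{E}(v_h))=0$ for all $v_h\in V_h^r$.

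Next I would restrict the test functions to the conforming subspace: for $v_h\in V_{h,c}^r=V_h^r\cap H^1_0(\Omega)$, the conformity-preservation hypothesis gives $\mathcal{E}(v_h)=v_h$, so the orthogonality becomes $a(u-\mathcal{E}(u_h),v_h)=0$ for all $v_h\in V_{h,c}^r$. On the other hand, the classical FEM solution satisfies $a(u_h^{FEM},v_h)=\ell(v_h)=a(u,v_h)$ for all $v_h\in V_{h,c}^r$, hence $a(u-u_h^{FEM},v_h)=0$ for all $v_h\in V_{h,c}^r$. Subtracting the two relations gives $a(\mathcal{E}(u_h)-u_h^{FEM},v_h)=0$ for all $v_h\in V_{h,c}^r$. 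Now $\mathcal{E}(u_h)\in V_h^s\cap H^1_0(\Omega)$ while $u_h^{FEM}\in V_{h,c}^r$; if $r=s$ these both lie in $V_{h,c}^r$ and we may take $v_h=\mathcal{E}(u_h)-u_h^{FEM}$ to conclude $a(\mathcal{E}(u_h)-u_h^{FEM},\mathcal{E}(u_h)-u_h^{FEM})=0$, and coercivity of $a(\cdot,\cdot)$ on $H^1_0(\Omega)$ (positive-definiteness of $A$ plus Poincar\'e) forces $\mathcal{E}(u_h)=u_h^{FEM}$.

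The one genuinely delicate point — the main obstacle, though a mild one — is the index mismatch between $s$ (the target degree of the recovery, hence the degree of $\mathcal{E}(u_h)$) and $r$ (the degree of the discontinuous space and of $u_h^{FEM}$). The statement of Lemma~\ref{RFEMeqFEM} phrases the conclusion as $\mathcal{E}(u_h)=u_h^{FEM}\in V_{h,c}^r$, so one is implicitly assuming $\mathcal{E}$ maps into the conforming space of the \emph{same} degree $r$, i.e. $s=r$ in this lemma (consistent with the earlier remark that $\mathcal{E}_s$ preserves conforming functions of degree $\le s$). I would make this explicit at the start of the proof, so that the difference $\mathcal{E}(u_h)-u_h^{FEM}$ is a legitimate test function in $V_{h,c}^r$ and the final coercivity argument applies. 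Everything else is a routine subtraction of variational identities, exactly as sketched in the text, and indeed the author notes the proof "is essentially included in the above discussion."
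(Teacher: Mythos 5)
Your proposal follows essentially the same route as the paper's own argument (the discussion preceding the lemma culminating in \eqref{Go}, \eqref{gabriel} and \eqref{FEM}): test the weak form with $\mathcal{E}(v_h)$, subtract \eqref{rem}, restrict to conforming test functions where $\mathcal{E}(v_h)=v_h$ and the stabilisation vanishes, and conclude by Galerkin orthogonality and uniqueness of the conforming solution. Your explicit treatment of the final coercivity step and of the implicit requirement that $\mathcal{E}(u_h)$ lie in $V_{h,c}^r$ (i.e.\ $s=r$ here) is a welcome clarification of a point the paper leaves tacit, but it is not a different proof.
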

We shall be explicit in the use any of the assumptions of Lemma \ref{RFEMeqFEM} in the a priori and a posteriori analysis below. 

At the other end of the spectrum, we can trivially obtain interior penalty discontinuous Galerkin methods by setting $\mathcal{E}$ to be the identity operator, i.e., \emph{no} recovery and letting
\begin{equation}\label{dg_stab}
s_h(w_h,v_h)= \int_{\Gamma} \Big(\sigma \jump{w_h}\cdot\jump{v_h}-\mean{A\nabla w_h}\cdot\jump{v_h}-\theta\mean{A\nabla v_h}\cdot\jump{w_h}\Big)\ud s,
\end{equation}
with $\mean{\cdot}$ denoting the average operator on $\Gamma$, defined face-wise as $\mean{q_h}|_e:=\frac{1}{2}(q_h|_{e\subset \partial T}+q_h|_{e\subset \partial T'})$ for $e=\partial T\cap \partial T'$ and $T,T'$ neighbouring elements in $\mathcal{T}$; $\theta\in [-1,1]$ typically, and $\sigma$ denotes the usual discontinuity-penalization parameter. Note that Assumptions \ref{stabilisation_bound} and \ref{stabilisation_ass2} are not satisfied verbatim in this case. However, provided the discontinuity penalization parameter $\sigma$ is chosen large enough, see, e.g., \cite{arnold,unified,hss}, it can be shown that
\[
c_{0} \norm{\mbf{h}^{\alpha-1/2}\jump{v}}{\Gamma}^2-\frac{1}{2}\norm{\sqrt{A}\nabla\mathcal{E}(v)}{\Omega}^2\le  s_h(v,v)\le C_0\norm{\mbf{h}^{\alpha-1/2}\jump{v}}{\Gamma}^2+\frac{1}{2}\norm{\sqrt{A}\nabla\mathcal{E}(v)}{\Omega}^2,\text{ for all } v\in V_h^r,
\] and, respectively, for Assumption \ref{stabilisation_ass2}, allowing for the error analysis below to still hold. We refrained from using this, more general, version of Assumptions \ref{stabilisation_bound} and \ref{stabilisation_ass2} in this work for simplicity of the presentation, as the benefit of generality does not seem to provide any significant new insight at this point.

Crucially, however, R-FEM offers significant flexibility in the choice of both the finite element spaces, of the recovery operators and of the stabilisation terms, thereby allowing also for \emph{new} numerical methods.  Indeed, many classical recovery operators, e.g., of Cl\'ement type do \emph{not} satisfy the  condition $\mathcal{E}(v_h)=v_h$, for all $v_h\in V_{h,c}^r$, thereby giving rise to R-FEM with various properties even for the model elliptic problem considered above. A pertinent example will be given below where we consider recoveries from element-wise constant functions onto conforming linear elements or, in general, recovery between \emph{different} finite element spaces. Furthermore, as we shall see below, for more general PDE problems, such as convection-diffusion equations the flexibility offered by the R-FEM construction allows for the construction stable methods in the convection-dominating regime. In this case, the use of $\mathcal{E}_s$ can result in non-standard/novel numerical methods.

\section{A priori error analysis}
\label{sec:apriori}
The boundary value problem in weak form reads: find $u\in H^1_0(\Omega)$ such that $a(u,v)=\ell (v)$ for all $v\in H^1_0(\Omega)$.
Setting $v=\mathcal{E}(v_h)$ for a $v_h\in V_h^r$, we deduce
$a(u,\mathcal{E}(v_h))=\ell (\mathcal{E}(v_h))$,
which, upon subtraction of \eqref{rem}, leads to
\begin{equation}\label{Go}
a(u-\mathcal{E}(u_h),\mathcal{E}(v_h))=s_h(u_h,v_h), \quad \text{for all } v_h\in V_h^r.
\end{equation}
Using \eqref{Go} and Assumption \ref{stabilisation_ass2},
we have, respectively,
\begin{equation}\label{cea_proof}
\begin{aligned}
\norm{\sqrt{A}\nabla (u-\mathcal{E}(u_h))}{\Omega}^2 +s_h(u_h,u_h)
= \ & a(u-\mathcal{E}(u_h), u-\mathcal{E}(u_h))+s_h(u_h,u_h) \\
= \ & a(u-\mathcal{E}(u_h), u-\mathcal{E}(v_h))+s_h(u_h,v_h) \\
\le\ &\norm{\sqrt{A}\nabla (u-\mathcal{E}(u_h))}{\Omega}
\norm{\sqrt{A}\nabla(u-\mathcal{E}(v_h))}{\Omega} \\
&+\widetilde{C}_0\big(s_h(u_h,u_h)\big)^{1/2} \big(s_h(v_h,v_h)\big)^{1/2},
\end{aligned}
\end{equation}
for all $v_h\in V_h^r$, with the last step following from the Cauchy-Schwarz inequality and Assumption \ref{stabilisation_ass2}. This implies the quasi-optimality bound
\begin{equation}\label{cea}
\norm{\sqrt{A}\nabla (u-\mathcal{E}(u_h))}{\Omega}^2 +s_h(u_h,u_h)
\le \inf_{v_h\in V_h^r}\Big(\norm{\sqrt{A}\nabla(u-\mathcal{E}(v_h))}{\Omega}^2 +\widetilde{C}_0^2s_h(v_h,v_h)\Big).
\end{equation}

For $\Pi_r:L^2(\Omega)\to V_h^r$ denoting the orthogonal $L^2$-projection operator onto the finite dimensional space $V_h^r$, we have the estimate
\begin{equation}\label{stab_est}
\begin{aligned}
&\quad\inf_{v_h\in V_h^r}s_h(v_h,v_h) \le  s_h(\Pi_r u, \Pi_r u) \le C_0 \norm{\mbf{h}^{\alpha-1/2}\jump{\Pi_r u}}{\Gamma}^2
=C_0 \norm{\mbf{h}^{\alpha-1/2}\jump{u-\Pi_r u}}{\Gamma}^2 \\
&\le C  \sum_{T\in\mathcal{T}} \norm{\mbf{h}^{\alpha-1}(u-\Pi_r u)}{T}^2 
+\norm{\mbf{h}^{\alpha}\nabla(u-\Pi_r u)}{T}^2
\le  C  \sum_{T\in\mathcal{T}} h_T^{2(\alpha+s)} |u|_{s+1,T}^2,
\end{aligned}
\end{equation}
for $0\le s\le \min\{r,l\}$, when $u\in \prod_{T\in\mathcal{T}} H^{l+1}(T)\cap H^1_0(\Omega)$, for $l\ge 0$, using the standard trace estimate and the best approximation properties of $\Pi_r$.

Now, using Assumption \ref{stabilisation_bound} and \eqref{stab_est} on \eqref{cea} already proves the following result.

\begin{theorem}\label{thm:cea}
Let $u\in \prod_{T\in\mathcal{T}} H^{l+1}(T)\cap H^1_0(\Omega)$, for $l\ge 0$ be the solution to \eqref{pde} and $u_h\in V_h^r$ its R-FEM approximation with the stabilisation term satisfying Assumptions \ref{stabilisation_bound} and \ref{stabilisation_ass2}. Then, we have the a priori bound
\[
\quad\norm{\sqrt{A}\nabla (u-\mathcal{E}(u_h))}{\Omega}^2 
+c_0\norm{\mbf{h}^{\alpha-1/2}\jump{u_h}}{\Gamma}^2
\le \inf_{v_h\in V_h^r}\norm{\sqrt{A}\nabla(u-\mathcal{E}(v_h))}{\Omega}^2 +C  \sum_{T\in\mathcal{T}} h_T^{2(\alpha+s)} |u|_{s+1,T}^2,
\]
for all $0\le s\le \min\{r,l\}$, with $C$ a positive constant, independent of $u,$ $u_h$, $\mbf{h}$ and of $\mathcal{E}$.
\end{theorem}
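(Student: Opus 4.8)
The plan is a C\'ea-type quasi-optimality argument, and in fact most of its ingredients are already assembled in the preceding discussion: the error relation \eqref{Go}, the abstract bound \eqref{cea}, and the approximation estimate \eqref{stab_est}. So the proof will mainly consist of combining \eqref{cea} with Assumptions \ref{stabilisation_bound} and \ref{stabilisation_ass2} and with \eqref{stab_est}. I would begin by recording the error equation: testing the weak formulation $a(u,v)=\ell(v)$ with $v=\mathcal{E}(v_h)$ for an arbitrary $v_h\in V_h^r$ (legitimate since $\mathcal{E}(v_h)\in H^1_0(\Omega)$) and subtracting \eqref{rem} gives \eqref{Go}, namely $a(u-\mathcal{E}(u_h),\mathcal{E}(v_h))=s_h(u_h,v_h)$ for all $v_h\in V_h^r$; this is the only place the defining equations are used.

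Next I would estimate the combined quantity $\norm{\sqrt{A}\nabla(u-\mathcal{E}(u_h))}{\Omega}^2+s_h(u_h,u_h)$ directly. Rewriting it as $a(u-\mathcal{E}(u_h),u-\mathcal{E}(u_h))+s_h(u_h,u_h)$, inserting an arbitrary $v_h\in V_h^r$ via $u-\mathcal{E}(u_h)=(u-\mathcal{E}(v_h))+\mathcal{E}(v_h-u_h)$, and using \eqref{Go} applied to $v_h-u_h\in V_h^r$ to replace $a(u-\mathcal{E}(u_h),\mathcal{E}(v_h-u_h))$ by $s_h(u_h,v_h)-s_h(u_h,u_h)$, the diagonal stabilisation term cancels and one is left with $a(u-\mathcal{E}(u_h),u-\mathcal{E}(v_h))+s_h(u_h,v_h)$. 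A Cauchy--Schwarz step on $a(\cdot,\cdot)$ together with Assumption \ref{stabilisation_ass2} on $s_h$, followed by Young's inequality to absorb $\tfrac12\norm{\sqrt{A}\nabla(u-\mathcal{E}(u_h))}{\Omega}^2+\tfrac12 s_h(u_h,u_h)$ on the left, delivers, for each fixed $v_h\in V_h^r$, the bound $\norm{\sqrt{A}\nabla(u-\mathcal{E}(u_h))}{\Omega}^2+s_h(u_h,u_h)\le\norm{\sqrt{A}\nabla(u-\mathcal{E}(v_h))}{\Omega}^2+\widetilde{C}_0^2\,s_h(v_h,v_h)$, which is \eqref{cea}.

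To obtain the $\mbf{h}$-explicit form I would, on the left-hand side, apply the lower bound of Assumption \ref{stabilisation_bound} to get $c_0\norm{\mbf{h}^{\alpha-1/2}\jump{u_h}}{\Gamma}^2\le s_h(u_h,u_h)$; and on the right-hand side I would specialise $v_h=\Pi_r u$. Since $u\in H^1_0(\Omega)$ is single-valued, $\jump{\Pi_r u}=\jump{\Pi_r u-u}$ on $\Gamma$, so the upper bound of Assumption \ref{stabilisation_bound} gives $s_h(\Pi_r u,\Pi_r u)\le C_0\norm{\mbf{h}^{\alpha-1/2}\jump{u-\Pi_r u}}{\Gamma}^2$; a scaled multiplicative trace inequality bounds the latter by $C\sum_{T\in\mathcal{T}}\big(\norm{\mbf{h}^{\alpha-1}(u-\Pi_r u)}{T}^2+\norm{\mbf{h}^{\alpha}\nabla(u-\Pi_r u)}{T}^2\big)$, and the standard Bramble--Hilbert approximation estimates for the $L^2$-projection $\Pi_r$ then yield $C\sum_{T\in\mathcal{T}}h_T^{2(\alpha+s)}|u|_{s+1,T}^2$ for every $0\le s\le\min\{r,l\}$; this is exactly \eqref{stab_est}. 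The remaining right-hand contribution $\norm{\sqrt{A}\nabla(u-\mathcal{E}(\Pi_r u))}{\Omega}^2$ is a recovery best-approximation error, which I would report as $\inf_{v_h\in V_h^r}\norm{\sqrt{A}\nabla(u-\mathcal{E}(v_h))}{\Omega}^2$ in order to keep the statement independent of the particular choice of $\mathcal{E}$; assembling the pieces gives the asserted bound.

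The argument has no serious obstacle --- it is essentially algebraic once \eqref{Go} is in hand --- and the only computation requiring care is the trace-plus-approximation estimate \eqref{stab_est}: one must track the powers of $\mbf{h}$ so that both the $L^2$ and the gradient contributions of $u-\Pi_r u$ scale like $h_T^{2(\alpha+s)}$, which is precisely what forces the conditions $s\le r$ (approximation order of $\Pi_r$) and $s\le l$ (available Sobolev regularity of $u$). The only other point to be mindful of is the bookkeeping that $\Pi_r u$ is the choice used to render the stabilisation term higher order, whereas the recovered approximation error is kept in best-approximation form; no property of $\mathcal{E}$ beyond linearity and $\mathcal{E}(V_h^r)\subset H^1_0(\Omega)$ enters the proof.
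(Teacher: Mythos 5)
Your proposal is correct and takes essentially the same route as the paper: the error relation \eqref{Go}, the quasi-optimality bound \eqref{cea} obtained by inserting $v_h$ and using Cauchy--Schwarz together with Assumption \ref{stabilisation_ass2} (the paper closes this step with a discrete Cauchy--Schwarz where you use Young's inequality, which is immaterial), and then the choice $v_h=\Pi_r u$ with the jump identity $\jump{\Pi_r u}=\jump{u-\Pi_r u}$, the trace-plus-approximation estimate \eqref{stab_est}, and the lower bound of Assumption \ref{stabilisation_bound} on the left. Even your bookkeeping remark --- that the recovered-approximation term is evaluated at $\Pi_r u$ but reported as an infimum over $V_h^r$ --- mirrors exactly how the paper itself passes from \eqref{cea} and \eqref{stab_est} to the stated bound.
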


Assuming now that the recovery operator $\mathcal{E}$ in the definition of R-FEM allows for optimal approximation of $u$ by $\mathcal{E}(v_h)$ for some $v_h\in V_h^r$, one can recover optimal a priori error bounds. Below we highlight some interesting cases of $\mathcal{E}$ and polynomial order $r$.

\subsection{Case $r=s\ge 1$.}\label{sec:reqs} We begin by considering the case of recovery into a conforming finite element space of the same polynomial degree. As discussed in Lemma \ref{RFEMeqFEM}, for certain choices of $\mathcal{E}$ this case results to $\mathcal{E}(u_h)$ being the conforming FEM solution. Nonetheless, as we shall see below, this is not necessarily the case for more general elliptic operators involving lower order terms,
(see Section \ref{sec:conv-diff} below,) or, indeed when the recovery takes place on a different triangulation than the underlying mesh $V_h^r$ is defined on; see Section \ref{ext:poly} below.

When $r=s$, the discontinuous space $V_h^r$ contains sufficient approximation power to ensure $r$-th order convergence in the energy norm.

\begin{corollary}\label{thm:reqs}
Let the discontinuity-penalization parameter $\sigma$ be given by \begin{equation}\label{sigma_reqs}
\sigma = c_\sigma \mathcal{A}\mathbf{h}^{-1},
\end{equation}
for any $c_\sigma>0$ and let the recovery operator $\mathcal{E}$ be such that $V_h^r\cap H^1_0(\Omega)\subset\mathcal{E}(V_h^r)$. Further, assuming that $u|_T\in H^{k+1}(T)$, $T\in\mathcal{T}$, $k\ge1$, we have the a priori bound
\begin{equation}\label{apriori_reqs}
\norm{\sqrt{A}\nabla (u-\mathcal{E}(u_h))}{\Omega}^2 +s_h(u_h,u_h)
\le C\sum_{T\in\mathcal{T}} \mathcal{A}|_{T} h_T^{2q}|u|_{q+1,T}^2,
\end{equation}
with $0\le q\le \min\{k,r\}$, for a $C$ positive constant, independent of the meshsize and of $u$.
\end{corollary}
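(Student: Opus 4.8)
The plan is to read off the result from the quasi-optimality estimate of Theorem~\ref{thm:cea}, specialised by exploiting the extra hypothesis $V_h^r\cap H^1_0(\Omega)\subset\mathcal{E}(V_h^r)$. First one verifies that the hypotheses of Theorem~\ref{thm:cea} hold for the present data: with $\sigma$ as in \eqref{sigma_reqs} the stabilisation \eqref{stabilisation} reads $s_h(v,v)=c_\sigma\int_{\Gamma}\mathcal{A}\,\mbf{h}^{-1}|\jump{v}|^2\,\ud s$, so, since $A$ (and hence $\mathcal{A}$) is uniformly bounded above and below, Assumption~\ref{stabilisation_bound} holds with $\alpha=0$ and with $c_0,C_0$ proportional to $c_\sigma$ times the extreme eigenvalues of $A$, while Assumption~\ref{stabilisation_ass2} holds with $\widetilde C_0=1$ by Cauchy--Schwarz. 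Theorem~\ref{thm:cea}, applied with $l=k$ and $\alpha=0$, then gives
\begin{equation*}
\norm{\sqrt{A}\nabla (u-\mathcal{E}(u_h))}{\Omega}^2 +c_0\norm{\mbf{h}^{-1/2}\jump{u_h}}{\Gamma}^2 \le \inf_{v_h\in V_h^r}\norm{\sqrt{A}\nabla(u-\mathcal{E}(v_h))}{\Omega}^2 +C\su h_T^{2q}\snorm{u}{q+1,T}^2,
\end{equation*}
for every $0\le q\le\min\{k,r\}$.

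The second step is to estimate the infimum. Since $u|_T\in H^{k+1}(T)$ with $k\ge1$ and $u\in H^1_0(\Omega)$, the embedding $H^{k+1}(T)\hookrightarrow C(\bar T)$ (valid for $d\le3$) together with $H^1$-conformity forces $u\in C(\bar\Omega)$, so its conforming Lagrange interpolant $I_hu\in V_h^r\cap H^1_0(\Omega)$ is well defined and satisfies the usual local estimate $\norm{\nabla(u-I_hu)}{T}^2\le C\,h_T^{2q}\snorm{u}{q+1,T}^2$. By hypothesis there exists $v_h\in V_h^r$ with $\mathcal{E}(v_h)=I_hu$, whence
\begin{equation*}
\inf_{v_h\in V_h^r}\norm{\sqrt{A}\nabla(u-\mathcal{E}(v_h))}{\Omega}^2 \le \norm{\sqrt{A}\nabla(u-I_hu)}{\Omega}^2 \le C\su\mathcal{A}|_T\,h_T^{2q}\snorm{u}{q+1,T}^2.
\end{equation*}
Because $A$ is uniformly positive definite one also has $1\le C\,\mathcal{A}|_T$, so the remaining term $C\su h_T^{2q}\snorm{u}{q+1,T}^2$ from Theorem~\ref{thm:cea} is likewise absorbed into $C\su\mathcal{A}|_T\,h_T^{2q}\snorm{u}{q+1,T}^2$; combining the two displays yields $\norm{\sqrt{A}\nabla (u-\mathcal{E}(u_h))}{\Omega}^2 +c_0\norm{\mbf{h}^{-1/2}\jump{u_h}}{\Gamma}^2\le C\su\mathcal{A}|_T\,h_T^{2q}\snorm{u}{q+1,T}^2$. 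Finally $s_h(u_h,u_h)\le C_0\norm{\mbf{h}^{-1/2}\jump{u_h}}{\Gamma}^2=(C_0/c_0)\,c_0\norm{\mbf{h}^{-1/2}\jump{u_h}}{\Gamma}^2$, and $C_0/c_0$ is a fixed ratio of the spectral bounds of $A$, independent of $c_\sigma$ and of the mesh; absorbing it into $C$ gives \eqref{apriori_reqs}.

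There is no substantial obstacle, the argument being essentially a specialisation of Theorem~\ref{thm:cea}. The one point worth underlining is that it is precisely the surjectivity-type hypothesis $V_h^r\cap H^1_0(\Omega)\subset\mathcal{E}(V_h^r)$ that allows the recovered trial function $\mathcal{E}(v_h)$ to reach the conforming interpolant $I_hu$ and thereby be optimally close to $u$; the stabilisation contribution requires no analogous hypothesis because Theorem~\ref{thm:cea} already controls it optimally through the $L^2$-projection $\Pi_r u$ rather than through the same $v_h$. The only genuine care needed is the bookkeeping of the factors $\mathcal{A}|_T$, $c_0$ and $C_0$ so that the final estimate takes exactly the form \eqref{apriori_reqs}.
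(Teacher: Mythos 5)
Your proposal is correct and follows essentially the same route as the paper: the corollary is obtained by applying Theorem~\ref{thm:cea} with $\alpha=0$ (since $\sigma=c_\sigma\mathcal{A}\mbf{h}^{-1}$ makes \eqref{stabilisation} satisfy Assumptions~\ref{stabilisation_bound}--\ref{stabilisation_ass2}) and then using the hypothesis $V_h^r\cap H^1_0(\Omega)\subset\mathcal{E}(V_h^r)$ to bound the infimum by the error of a conforming interpolant of $u$, exactly as in your argument. Your explicit bookkeeping of the $\mathcal{A}|_T$, $c_0$, $C_0$ factors is a harmless elaboration of what the paper leaves implicit.
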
\qed

\subsection{Case $0\le r<s$.} 
One may wish to recover from an element-wise discontinuous polynomial space of order $r$, into an $H^1$-conforming polynomial space of strictly higher order $s$. In this setting, a recovered element method of optimal order \emph{at least} with respect to $r$ is expected. The analysis presented in Section \ref{sec:reqs} above, with the choice \eqref{sigma_reqs} for the discontinuity-penalization parameter, immediately gives the error bound
\eqref{apriori_reqs} with $0\le q\le \min\{k,r\}<s$. It is clear that the above bound is of lower order than the respective finite element method involving polynomials of order $s$. This is, perhaps, not surprising as the approximation power of $V_h^r$ allows, in general, for optimal rates with respect to $r$ only.

However, in practice there are some advantages in recovering from lower into higher polynomial order spaces. In particular, as we shall see below when using $r=s-1$, for $s=1,2,\dots$, with appropriately chosen power $\alpha$ in the stabilisation $s_h$, one can recover $s$-th order convergent method on simplicial meshes! To highlight this, somewhat surprising feature, we focus on the case $0=r<s=1$, i.e., recovering element-wise constants $u_h\in V_h^0$ into conforming element-wise linear elements $\mathcal{E}(u_h)\in V_h^1\cap H^1_0(\Omega)$, where first order convergence in the energy norm and second order convergence in the $L^2$-norm is observed for a suitable choice of the discontinuity-penalization parameter on shape-regular meshes; see Section \ref{sec:numerics} for a numerical illustration.

To show this, setting 
\begin{equation}\label{sigma_rneqs}
\sigma= c_\sigma \mathcal{A}\mathbf{h},
\end{equation}
for any $c_\sigma>0$, (and, consequently we shall have $\alpha=1$ in Assumption \ref{stabilisation_bound},) we deduce from Theorem \ref{thm:cea} that
\begin{equation}\label{cea_rneqs}
\norm{\sqrt{A}\nabla (u-\mathcal{E}(u_h))}{\Omega}^2 +s_h(u_h,u_h)
\le \inf_{v_h\in V_h^0}\norm{\sqrt{A}\nabla(u-\mathcal{E}(v_h))}{\Omega}^2+C\sum_{T\in\mathcal{T}} h_T^{2}|u|_{1,T}^2.
\end{equation}

If one can show, e.g., that $\mathcal{E}(V_h^0)= V_h^1\cap H^1_0(\Omega)$, (at least for some appropriately designed meshes,) then the first term on the right-hand side of \eqref{cea_rneqs} becomes
\begin{equation}\label{cea_low_eqs}
\inf_{v_h\in V_h^0}\norm{\sqrt{A}\nabla(u-\mathcal{E}(v_h))}{}^2=\inf_{w_h\in V_h^1\cap H^1_0(\Omega)}\norm{\sqrt{A}\nabla(u-w_h)}{}^2
\le C\sum_{T\in\mathcal{T}} h_T^{2}|u|_{T,2}^2,
\end{equation}
from standard Bramble-Hilbert type approximation results \cite{ciarlet},
i.e., optimal \emph{linear} convergence is retrieved. We shall now complete the proof of \eqref{cea_low_eqs} for the case of a two-dimensional simplicial mesh.

\begin{lemma} For $d=2$, let $\mathcal{T}$ be a regular simplicial mesh. Then, we have $\mathcal{E}(V_h^0)= V_h^1\cap H^1_0(\Omega)$.
\end{lemma}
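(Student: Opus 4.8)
The inclusion $\mathcal{E}(V_h^0)\subseteq V_h^1\cap H^1_0(\Omega)$ is immediate from the construction of the recovery $\mathcal{E}=\mathcal{E}_1$ of Lemma~\ref{lemma_E1} with $s=1$, so the whole content is the reverse inclusion, i.e.\ the \emph{surjectivity} of the linear map $\mathcal{E}:V_h^0\to V_h^1\cap H^1_0(\Omega)$. I would pass to the transpose: in the nodal (hat-function) basis of $V_h^1\cap H^1_0(\Omega)$, indexed by the interior vertices $\nu$, and the indicator basis $\{\chi_T\}_{T\in\mathcal{T}}$ of $V_h^0$, the matrix of $\mathcal{E}$ has entry $|\omega_\nu|^{-1}$ when $\nu\in\bar T$ and $0$ otherwise. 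Since these weights are positive, surjectivity is equivalent to the following purely combinatorial claim: if $y$ is a real function on the vertices of $\mathcal{T}$ with $y_\nu=0$ at every boundary vertex $\nu\in\partial\Omega$ and with $y_a+y_b+y_c=0$ for every triangle $T=\{a,b,c\}\in\mathcal{T}$, then $y\equiv0$.

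To prove this claim I would use a discrete propagation (``infection'') argument. Call a vertex \emph{good} once the vanishing of $y$ there has been forced: boundary vertices are good by hypothesis, and the triangle relation propagates goodness, since if two vertices of a triangle are good then $y$ vanishes at the third. Let $S$ be the set of vertices that never become good; every vertex of $S$ is then interior, and the basic observation is that \emph{no triangle has exactly one vertex in $S$}, its other two vertices being good and forcing the third. It remains to show $S=\emptyset$.

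Assume $S\ne\emptyset$. First, propagating goodness around the triangle fan of an arbitrary boundary vertex $\nu_0$, starting from a boundary edge at $\nu_0$ whose two endpoints are good, shows that every vertex sharing a triangle with $\nu_0$ is good; hence no triangle that meets $\partial\Omega$ can meet $S$, so the compact set $R:=\bigcup\{\,\bar T:\, T\cap S\ne\emptyset\,\}$ is a nonempty subset of the \emph{open} domain $\Omega$. I then claim $R$ is also open, hence clopen in $\mathbb{R}^2$, which is impossible for a nonempty bounded set. Indeed, any point of the topological boundary of $R$ lies on an interior mesh edge $e$ separating a triangle $T_{\mathrm{in}}$ with $T_{\mathrm{in}}\cap S\ne\emptyset$ from a triangle $T_{\mathrm{out}}$ with $T_{\mathrm{out}}\cap S=\emptyset$ (or at an interior vertex where two such triangles meet within its fan); but then the two endpoints of $e$, being vertices of $T_{\mathrm{out}}$, are good, so $T_{\mathrm{in}}$ has at most one vertex in $S$, hence exactly one --- contradicting the basic observation. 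Therefore $S=\emptyset$, $y\equiv0$, and the lemma follows.

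The main obstacle is precisely this globalisation: upgrading the local fact ``no triangle has exactly one bad vertex'' to ``there are no bad vertices''. I expect the connectedness/clopen argument above to be the cleanest route, but it requires two pieces of care: confining the bad region $R$ strictly inside $\Omega$ --- which is where the separate fan argument at boundary vertices and the homogeneous boundary data enter --- and the elementary but slightly delicate fact that the topological boundary of a union of closed simplices of a \emph{conforming} triangulation is a union of mesh edges, each incident to exactly one triangle on each side (or to $\partial\Omega$). It is exactly here, together with the three-term triangle relation, that $d=2$, simpliciality and conformity are used; the analogous statement genuinely fails on quadrilateral meshes, where elementwise constants carry too little information.
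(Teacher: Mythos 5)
Your proposal is correct, but it reaches the conclusion by a genuinely different route than the paper. The paper argues constructively: for each interior vertex $\nu$ it builds an explicit piecewise-constant preimage $\psi_\nu\in V_h^0$ with $\phi_\nu=c\,\mathcal{E}(\psi_\nu)$, $c>0$, by first layering the interior nodes into sets $\mathcal{N}_1,\dots,\mathcal{N}_r$ according to a combinatorial distance to the boundary and then assigning alternating values $(-1)^j$ along a descending cascade of triangles, so that the nodal averages cancel at every node except $\nu$; surjectivity then follows because the image contains the nodal basis. You instead pass to the transpose of the (entrywise nonnegative) matrix of $\mathcal{E}$ and show its kernel is trivial: the kernel condition is exactly ``$y$ vanishes at boundary vertices and $y_a+y_b+y_c=0$ on every triangle,'' which you kill by the boundary-to-interior propagation (every triangle with a boundary vertex is cleared by the fan argument, no triangle can have exactly one uncleared vertex) combined with the clopen/connectedness argument for the union $R$ of triangles meeting the uncleared set. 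Both proofs are sound; what each buys is different. The paper's construction is explicit (it exhibits preimages, which is also what makes the statement $\mathcal{E}(V_h^0)\supseteq V_h^1\cap H^1_0(\Omega)$ feel concrete), but the existence of the layering and the claim that exactly two elements with values $+1$ and $-1$ meet at each intermediate node are only sketched. Your duality argument is non-constructive but more robust and self-contained: it uses only conformity of the triangulation, the fan structure of vertex stars in two dimensions, and connectedness of $\mathbb{R}^2$, and it avoids the cascade construction altogether; the two ``delicate'' points you flag (confining the bad region strictly inside $\Omega$ via the boundary fans, and the fact that the boundary of a union of closed triangles consists of mesh edges shared by exactly two triangles) are indeed the only places needing care, and both are standard for regular (conforming) simplicial meshes in $d=2$. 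Your closing remark about quadrilateral meshes is tangential and not needed for the lemma, so I would either substantiate it or drop it, but it does not affect the validity of the proof.
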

\begin{proof} 
Recalling that $\mathcal{N}$ denotes the set of all Lagrange nodes $\nu\in\Omega$, we set $\mathcal{N}_0$ to be the set of respective Lagrange nodes situated on $\partial\Omega$. (Since we are concerned with linear elements on a simplicial mesh, $\mathcal{N}$ is equal to the set of internal nodes in the mesh.) To prove the result, it is sufficient to prove that each Lagrange nodal basis $\phi_{\nu}$ can be constructed as a recovery of an element-wise constant function $\psi_{\nu}\in V_h^0$, i.e., $\phi_{\nu}=\mathcal{E}(\psi_{\nu})$. 

To construct $\psi_{\nu}\in V_h^0$, we split $\mathcal{N}$ into a union of disjoint subsets as follows. Let $\mathcal{N}_1\subset \mathcal{N}$ denote the set of nodes $\nu_1$ for which there exists a triangle $T\in\mathcal{T}$ having $\nu_1$ as a node and the two remaining nodes in $\mathcal{N}_0$. For $i=2,\dots, r$, for some $r$, let $\mathcal{N}_i\subset \mathcal{N}\backslash \cup_{j=1}^{i-1}\mathcal{N}_j$ denote the set of nodes $\nu_i$ for which there exists a triangle $T\in\mathcal{T}$ having $\nu_i$ as a node and the two remaining nodes in $\mathcal{N}_{i-1}$. The existence of an $r\in\mathbb{N}$ such that $\mathcal{N}=\cup_{j=1}^{r}\mathcal{N}_j$ follows from the assumption that $\mathcal{T}$ is a regular simplicial mesh.

Having constructed a pairwise disjoint subdivision $\{\mathcal{N}_j\}_{j=1}^r$ of $\mathcal{N}$, we can now give an algorithm for the construction of $\psi_\nu$. Let $r_{\nu}\in\mathbb{N}$ be such that $\nu\in\mathcal{N}_{r_{\nu}}$ and set $\psi_{\nu}=1=(-1)^0$ on the element $T_{r_{\nu}}\in\mathcal{T}$ with vertex $\nu$ and the two other vertices $\nu_1,\nu_2$ in $\mathcal{N}_{r_{\nu}-1}$. For each of the $\nu_i,$ there exists a triangle  $T_{r_{\nu}-1}^i\in\mathcal{T}$, $i=1,2$, with one vertex being $\nu_i$ and the other two vertices being in $\mathcal{N}_{r_{\nu}-2}$. We now have 2 possibilities: 1) if $T_{r_{\nu}-1}^1$ and $T_{r_{\nu}-1}^2$ have a common vertex, say $\nu_3$, then there exists a triangle $T_{r_{\nu}-1}\in \mathcal{T}$ having vertices $\nu_1,\nu_2$ and $\nu_3$ on which we set $\psi_\nu=-1$ (notice that $T_{r_{\nu}-1}$ is necessarily different to the $T_{r_{\nu}-1}^i$'s); if $T_{r_{\nu}-1}^1$ and $T_{r_{\nu}-1}^2$ have no common vertex, we set $\psi_\nu=-1=(-1)^1$ on both $T_{r_{\nu}-1}^1,T_{r_{\nu}-1}^2$. We continue the above algorithm by setting $\psi_\nu=(-1)^j$ on $T_{r_{\nu}-j}^i$ for an index set $\mathcal{I}_j\ni i$, until we reach the boundary nodes in $\mathcal{N}_0$ for which the recovery imposes the homogeneous boundary conditions strongly. Noticing that on each node $\nu_i$ there exist exactly two elements with non-zero values one $+1$ and the other $-1$, we conclude that $\phi_{\nu}=c\mathcal{E}(\psi_{\nu})$ for some $c>0$, which already proves the result.
\end{proof}

Thus, we have proved the following a priori error bound.
\begin{corollary}
For $d=2$, let $\mathcal{T}$ be a regular and shape-regular simplicial mesh. Further, assuming that $u|_T\in H^2(T)$, $T\in\mathcal{T}$, and that $\alpha=1$ in Assumption \ref{stabilisation_bound}, we have that \eqref{apriori_reqs} holds with $q=1$.
\end{corollary}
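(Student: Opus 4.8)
The statement is a direct consequence of the material assembled above, and the plan is simply to chain three ingredients together: the quasi-optimality bound of Theorem~\ref{thm:cea}, the surjectivity $\mathcal{E}(V_h^0)=V_h^1\cap H^1_0(\Omega)$ of the nodal averaging recovery established in the preceding Lemma, and a classical best-approximation estimate for continuous piecewise linears.

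First I would specialise Theorem~\ref{thm:cea} to $r=0$. The penalty choice \eqref{sigma_rneqs}, $\sigma=c_\sigma\mathcal{A}\mathbf{h}$, puts us in the regime $\alpha=1$ of Assumption~\ref{stabilisation_bound}, and since $\min\{r,l\}=0$ the only admissible exponent in the theorem is $s=0$; this reproduces exactly \eqref{cea_rneqs}, namely
\[
\norm{\sqrt{A}\nabla(u-\mathcal{E}(u_h))}{\Omega}^2+s_h(u_h,u_h)\le \inf_{v_h\in V_h^0}\norm{\sqrt{A}\nabla(u-\mathcal{E}(v_h))}{\Omega}^2+C\sum_{T\in\mathcal{T}}h_T^{2}|u|_{1,T}^2 .
\]
Next I would invoke the preceding Lemma: on a regular, shape-regular two-dimensional simplicial mesh the nodal averaging operator $\mathcal{E}=\mathcal{E}_1$ maps $V_h^0$ onto all of $V_h^1\cap H^1_0(\Omega)$, so the infimum over $V_h^0$ above coincides with the best-approximation error $\inf_{w_h\in V_h^1\cap H^1_0(\Omega)}\norm{\sqrt{A}\nabla(u-w_h)}{\Omega}^2$, as in \eqref{cea_low_eqs}. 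A standard Bramble--Hilbert (or Scott--Zhang) estimate on the shape-regular mesh, together with $A\in[L^\infty(\Omega)]^{d\times d}$ (absorbing $\|A\|_{L^\infty(T)}$ into the elementwise constant $\mathcal{A}|_T$) and the assumed elementwise $H^2$-regularity of $u$, bounds this by $C\sum_{T\in\mathcal{T}}\mathcal{A}|_T h_T^{2}|u|_{2,T}^2$.

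Finally I would combine the two terms: both $C\sum_T\mathcal{A}|_T h_T^{2}|u|_{2,T}^2$ and $C\sum_T h_T^{2}|u|_{1,T}^2$ scale as $h_T^2$ elementwise, the latter being of the same (or lower) order and absorbable into the constant, so their sum has the form of the right-hand side of \eqref{apriori_reqs} with $q=1$, which is the claim. I do not expect a genuine obstacle in this corollary itself: the substantive step is the purely combinatorial construction in the preceding Lemma of the element-wise constant pre-images $\psi_\nu$ of the Lagrange hat functions (and the fact, needed there and used implicitly here, that this $\mathcal{E}_1$ satisfies the Karakashian--Pascal bound \eqref{KP_stab}, hence well-posedness and Assumption~\ref{stabilisation_bound} with $\alpha=1$ for the chosen $\sigma$); what remains here is only to line up the hypotheses of Theorem~\ref{thm:cea} with \eqref{sigma_rneqs} and to track the constants relating $A$ to $\mathcal{A}$.
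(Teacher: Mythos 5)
Your proposal is correct and follows essentially the same route as the paper: specialising Theorem \ref{thm:cea} with the penalty choice \eqref{sigma_rneqs} (so $\alpha=1$ and the approximation exponent $s=0$) to obtain \eqref{cea_rneqs}, invoking the preceding Lemma to identify $\mathcal{E}(V_h^0)$ with $V_h^1\cap H^1_0(\Omega)$, and concluding via the standard Bramble--Hilbert best-approximation estimate as in \eqref{cea_low_eqs}. The paper presents the corollary as an immediate consequence of exactly this chain, so your argument matches it.
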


So, we conclude that it is possible to have an optimally convergent finite element method for second order elliptic problems based on element-wise constant finite element spaces. This remarkable \emph{order increasing} property of the method when recovering from element-wise constants to conforming linear elements appears to hold for greater values of $r$. In particular, we have observed numerically that \emph{R-FEM with $r=s-1$, for $s=1,2,3$ converges with order $s$ in the $H^1$-norm and with order $s+1$ in the $L^2$-norm when the stabilisation coefficient $\sigma$ in \eqref{stabilisation} is chosen as
\begin{equation}\label{gen_stab}
\sigma = c_\sigma \mathcal{A} \mbf{h}^s,
\end{equation}
for $c_{\sigma}>0$ constant, i.e., $\alpha = s$ in \eqref{KP_stab}.} We refer to the numerical experiments in Section \ref{sec:test1} for a numerical illustration. The proof of this interesting property will be discussed elsewhere. We conjecture that this property is due to dimensional considerations since $\dim V_h^r > \dim \big(V_h^{r+1}\cap H^1_0(\Omega)\big)$ for $r= 1,2,3$. Nonetheless, it is not clear at this point why the scaling in \eqref{gen_stab} appears to be necessary for observing optimal convergence.

Interestingly, such order-increasing behaviour is not observed for recoveries of the type $r=s-2$, $s=2,3,\dots$. Nonetheless, as we shall see in the numerical experiments below, it can be still beneficial for the \emph{constant} of the convergence rate to recover lower order discontinuous spaces into higher order conforming ones for `compatible' choices of $\alpha$.

\section{An a posteriori error bound}\label{sec:apost}
 
To highlight further the flexibility offered by the proposed R-FEM framework, we also prove a basic residual-type a posteriori error bound for R-FEM with conforming recoveries. 
\begin{theorem}
  \label{the:apost}
  Let $u$ be the solution of (\ref{pde}) and $u_h, \mathcal E(u_h)$ be the R-FEM solution defined through (\ref{rem}) with $\mathcal{E}:V_h^r\to V_h^s\cap H^1_0(\Omega)$, $r\in \mathbb{N}_{0}$ and $s\in\mathbb{N}$. Assume also that the recovery operator $\mathcal{E}$ satisfies  \eqref{KP_stab} and that the stabilisation $s_h$ can be decomposed into local contributions $s_{h,T}$, so that
  \[
  s_h(w,v) = \sum_{T\in\mathcal{T}}s_{h,T}(w,v), \quad \text{for all } w,v\in v_h^r,
  \] (cf. Remark \ref{remark_local}). Then, we have the bound 
  \begin{equation}
    \norm{\sqrt{A}\nabla(u-\mathcal{E}(u_h))}{\Omega}^2 
    \leq  
   C  \sum_{T\in\mathcal{T}}\big(\eta_T^2+h_{T}^{2\alpha}s_{h,T}(u_h,u_h)+\eta_{A,T}^2\big) ,
  \end{equation}
  where
  \begin{equation}
    \eta_T
:    =
\Big(    \norm{\mbf{h} \left(f + \nabla\cdot \Pi A \nabla \mathcal E(u_h)\right)}{T}^2
    +
    \frac{1}{2} \norm{\mbf{h}^{1/2}\jump{\Pi A \nabla \mathcal E(u_h)}}{\partial T\backslash \partial\Omega}^2\Big)^{1/2},
  \end{equation}
and 
$
\eta_{A,T}:=\norm{(A-\Pi A)\nabla \mathcal{E}(u_h)}{T}
$, for $C>0$ an estimable-from-above constant depending on $C_0, \widetilde{C}_0$ and on the shape-regularity of the mesh; $\Pi\equiv \Pi_t:L^2(\Omega)\to V_h^t$ denotes the orthogonal $L^2$-projection onto $V_h^t$ for some $t\in \mathbb{N}\cup \{0\}$; when $\Pi$ is applied to tensors it will be understood as acting component-wise. 
\end{theorem}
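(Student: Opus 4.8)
The plan is to test the exact weak formulation against the conforming error $e:=u-\mathcal{E}(u_h)\in H^1_0(\Omega)$ (which lies in $H^1_0(\Omega)$ precisely because $\mathcal{E}(u_h)\in V_h^s\cap H^1_0(\Omega)$), to insert a suitable \emph{discrete} function so that the R-FEM identity \eqref{rem} can be invoked, and then to estimate the resulting residual in the usual Galerkin a-posteriori fashion, keeping careful track of the stabilisation contribution. First I would write, using the uniform positive-definiteness of $A$ and $a(u,v)=\ell(v)$ for all $v\in H^1_0(\Omega)$,
\[
\norm{\sqrt{A}\nabla e}{\Omega}^2=a(e,e)=\ell(e)-a(\mathcal{E}(u_h),e).
\]
Letting $I_h\colon H^1_0(\Omega)\to V_h^1\cap H^1_0(\Omega)$ be a Scott--Zhang quasi-interpolant \cite{MR1011446} and setting $v_h:=\Pi_r I_h e\in V_h^r$, I would subtract the R-FEM identity $a(\mathcal{E}(u_h),\mathcal{E}(v_h))+s_h(u_h,v_h)=\ell(\mathcal{E}(v_h))$ to obtain
\[
\norm{\sqrt{A}\nabla e}{\Omega}^2=\big(\ell(\phi)-a(\mathcal{E}(u_h),\phi)\big)+s_h(u_h,v_h),\qquad\phi:=e-\mathcal{E}(v_h)\in H^1_0(\Omega).
\]

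Next I would establish, for every $\el$ and a fixed finite-overlap neighbourhood $\tilde\omega_T$ of $T$, the interpolation-type bounds $\norm{\phi}{T}\lesssim h_T\norm{\nabla e}{\tilde\omega_T}$, $\norm{\phi}{\partial T}\lesssim h_T^{1/2}\norm{\nabla e}{\tilde\omega_T}$ and $\norm{\nabla\phi}{\Omega}\lesssim\norm{\nabla e}{\Omega}$. These follow by splitting $\phi=(e-I_h e)+(I_h e-\Pi_r I_h e)+(\Pi_r I_h e-\mathcal{E}(\Pi_r I_h e))$: the first two summands are handled by the approximation and $H^1$-stability of $I_h$ and of $\Pi_r$, and for the third I would use that $I_h e$ is continuous, so $\jump{\Pi_r I_h e}=\jump{\Pi_r I_h e-I_h e}$, together with a trace inequality, local quasi-uniformity, and the bound \eqref{KP_stab} in its localised form with $|\alpha|=0$ for the $L^2$-part and $|\alpha|=1$ for the $H^1$-part (cf.\ \cite{KP}).

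Then for the residual I would write $A\nabla\mathcal{E}(u_h)=\Pi A\nabla\mathcal{E}(u_h)+(A-\Pi A)\nabla\mathcal{E}(u_h)$, integrate by parts element-wise on the piecewise-polynomial (hence piecewise-$H(\mathrm{div})$) term $\Pi A\nabla\mathcal{E}(u_h)$, and use $\phi\in H^1_0(\Omega)$ together with the continuity of $\phi$ across $\Gamma_{\dint}$ to collect the facet integrals into normal flux jumps, obtaining
\[
\ell(\phi)-a(\mathcal{E}(u_h),\phi)=\su\int_T\big(f+\nabla\cdot\Pi A\nabla\mathcal{E}(u_h)\big)\phi\,\ud x-\sum_{F\subset\Gamma_{\dint}}\int_F\jump{\Pi A\nabla\mathcal{E}(u_h)}\phi\,\ud s-\su\int_T(A-\Pi A)\nabla\mathcal{E}(u_h)\cdot\nabla\phi\,\ud x,
\]
and then apply the Cauchy--Schwarz inequality term by term with the bounds of the previous paragraph, re-indexing the facet sum over elements (which produces the factor $\ha$), to dominate the right-hand side by $C\big(\su(\eta_T^2+\eta_{A,T}^2)\big)^{1/2}\norm{\nabla e}{\Omega}$. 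For the stabilisation term I would again exploit that $I_h e$ is continuous, so $\jump{v_h}=\jump{\Pi_r I_h e-I_h e}$, whence a trace inequality and the approximation property of $\Pi_r$ give $\norm{\mbf{h}^{\alpha-1/2}\jump{v_h}}{\partial T}\lesssim h_T^{\alpha}\norm{\nabla e}{\tilde\omega_T}$; decomposing $s_h(u_h,v_h)=\su s_{h,T}(u_h,v_h)$ and using the local forms of Assumptions \ref{stabilisation_bound} and \ref{stabilisation_ass2} (cf.\ Remark \ref{remark_local}) element-wise, followed by the discrete Cauchy--Schwarz inequality in $\el$, yields $s_h(u_h,v_h)\le C\big(\su h_T^{2\alpha}s_{h,T}(u_h,u_h)\big)^{1/2}\norm{\nabla e}{\Omega}$. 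Combining everything and using $\norm{\nabla e}{\Omega}\le C\norm{\sqrt{A}\nabla e}{\Omega}$, then dividing, gives the asserted estimate.

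The main obstacle is obtaining the stabilisation estimator term with the \emph{correct} weight $h_T^{2\alpha}$: this forces one to test \eqref{rem} not with a conforming quasi-interpolant of $e$ but with its $L^2$-projection $\Pi_r I_h e$ onto $V_h^r$, so that $\jump{v_h}$ equals $\jump{\Pi_r I_h e-I_h e}$ and is therefore of order $h_T^{1/2}\norm{\nabla e}{}$ in $L^2(\partial T)$, and then to marry this scaling with the local Cauchy--Schwarz for $s_{h,T}$ from Remark \ref{remark_local}. A secondary technical point is that the estimates on $\phi$ require \eqref{KP_stab} in both the $|\alpha|=0$ and $|\alpha|=1$ instances, with the powers of $\mbf{h}$ reconciled through local quasi-uniformity; the remaining manipulations are the routine residual a-posteriori ones.
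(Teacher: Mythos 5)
Your proposal is correct and takes essentially the same route as the paper's proof: both test the error identity \eqref{Go} with a discrete function in $V_h^r$ whose jumps coincide with those of a projection defect of a continuous function, split $A\nabla\mathcal{E}(u_h)$ via $\Pi A$, integrate by parts element-wise, and control the residual, recovery and stabilisation terms through \eqref{KP_stab} together with the localised Assumptions \ref{stabilisation_bound} and \ref{stabilisation_ass2}. The only differences are cosmetic: the paper simply takes $\chi=\Pi_r(u-\mathcal{E}(u_h))$ rather than your $\Pi_r I_h(u-\mathcal{E}(u_h))$, which removes the intermediate Scott--Zhang step (note also that $\Pi_r I_h e=I_h e$ for $r\ge 1$, so your stabilisation term then vanishes identically), and where you invoke a ``localised'' form of \eqref{KP_stab} only the global bound is assumed in the theorem, which is enough once your element-wise estimates are summed over $\mathcal{T}$.
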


\begin{proof}
We have, respectively,
\begin{equation*}
  \begin{split}
    \norm{\sqrt{A}\nabla(u-\mathcal{E}(u_h))}{\Omega}^2 
    &=    
    a(u-\mathcal{E}(u_h),u- \mathcal{E}(u_h) -\mathcal{E}(\chi))+s_h(u_h,\chi)
    \\ 
    &=\int_\Omega f v \ud x
    -
    a(\mathcal{E}(u_h),v)+s_h(u_h,\chi),
  \end{split}
\end{equation*}
from \eqref{Go} for $\chi\in V_h^r$ to be chosen precisely below, setting $v:=u- \mathcal{E}(u_h) -\mathcal{E}(\chi)$ for brevity.
Integration by parts and re-ordering of the terms yields
\begin{equation*}
  \begin{split}
    \norm{\sqrt{A}\nabla(u-\mathcal{E}(u_h))}{\Omega}^2 
    &=   
    \sum_{T\in\mathcal{T}}
    \int_T
    \left(f + \nabla\cdot \Pi A\nabla\mathcal{E}(u_h)\right) v \,\ud x
  -
    \int_{\Gamma_{\dint}} \jump{\Pi A\nabla\mathcal{E}(u_h)}v\, \ud s+s_h(u_h,\chi)\\
    &\quad +
    \int_\Omega
    (A- \Pi A)\nabla\mathcal{E}(u_h) \cdot\nabla v \,\ud x.
  \end{split}
\end{equation*}
Application of Cauchy-Schwarz inequality results in
\begin{equation}\label{first_bound_apost}
  \begin{split}
    \norm{\sqrt{A}\nabla(u-\mathcal{E}(u_h))}{\Omega}^2 
    &\leq  
   \Big( \sum_{T\in\mathcal{T}}\eta_T^2\Big)^{\frac 1 2}\big( \norm{\mbf{h}^{-1} v}{\Omega}^2
+  \norm{\mbf{h}^{-1/2}v}{\Gamma_{\dint}}^2\big)^{\frac 1 2}+s_h(u_h,\chi)\\
&\quad +
   \norm{(A- \Pi A)\nabla\mathcal{E}(u_h)}{\Omega} \norm{\nabla v }{\Omega}.
  \end{split}
\end{equation}
Selecting $\chi = \Pi_r(u - \mathcal{E}(u_h))$, (with $\Pi_r: L^2(\Omega)\to V_h^r$ the orthogonal $L^2$-projection operator onto $V_h^r$,) we have
\begin{equation*}
\begin{aligned}
 \norm{\mbf{h}^{-1} v}{\Omega} &\le \norm{\mbf{h}^{-1} (u-\mathcal{E}(u_h)-\chi)}{\Omega}
 +\norm{\mbf{h}^{-1} (\chi-\mathcal{E}(\chi))}{\Omega}\\
 &\le C\norm{\nabla(u-\mathcal{E}(u_h))}{\Omega}
 +\norm{\mbf{h}^{-1} (\chi-\mathcal{E}(\chi))}{\Omega}.
 \end{aligned}
\end{equation*}
Now, we have the bound
\begin{equation*}\label{rec_stab}
\begin{split}
  \norm{\mbf{h}^{-1} (\chi-\mathcal{E}(\chi))}{\Omega}^2
  &\le C_{KP,0}\norm{\mbf{h}^{-1/2}\jump{\chi}}{\Gamma}^2
  =C_{KP,0}\norm{\mbf{h}^{-1/2}\jump{u-\mathcal{E}(u_h)-\chi}}{\Gamma}^2\\
  &\le C \big(\norm{\mbf{h}^{-1}(u-\mathcal{E}(u_h)-\chi)}{\Omega}^2 +\sum_{T\in\mathcal{T}}\norm{\nabla (u-\mathcal{E}(u_h)-\chi)}{T}^2\big)
  \\
  &\le C\norm{\nabla(u-\mathcal{E}(u_h))}{\Omega}^2,
\end{split}
\end{equation*}
from the best approximation properties of the $L^2$-projection and its stability in the local $H^1$-seminorm. Completely analogously, we can show also that
\begin{equation*}
\begin{aligned}
 \norm{\mbf{h}^{-1/2} v}{\Gamma_{\dint}} &\le \norm{\mbf{h}^{-1/2} (u-\mathcal{E}(u_h)-\chi)}{\Gamma_{\dint}}
 +\norm{\mbf{h}^{-1/2} (\chi-\mathcal{E}(\chi))}{\Gamma_{\dint}}\\
 &\le C\norm{\nabla(u-\mathcal{E}(u_h))}{\Omega}
 +\norm{\mbf{h}^{-1/2} (\chi-\mathcal{E}(\chi))}{\Gamma_{\dint}}.
 \end{aligned}
\end{equation*}
and the bound
\begin{equation*}\label{rec_stab2}
\norm{\mbf{h}^{-1/2} (\chi-\mathcal{E}(\chi))}{\Gamma_{\dint}}^2\le C\norm{\nabla(u-\mathcal{E}(u_h))}{\Omega}^2.
\end{equation*}
Next, using a similar line of argument, we also have
\[
\norm{\nabla v }{\Omega} \le C\norm{\nabla(u-\mathcal{E}(u_h))}{\Omega}^2.
\]
Moreover, Assumptions \ref{stabilisation_ass2}, \ref{stabilisation_bound} and Remark \ref{remark_local} imply
\begin{equation}
\begin{aligned}
s_h(u_h,\chi)&=\sum_{T\in\mathcal{T}}s_{h,T}(u_h,\chi)
\le \widetilde{C}_0 \sum_{T\in\mathcal{T}}\big(h_T^{2\alpha}s_{h,T}(u_h,u_h)\big)^{1/2}\big(h_T^{-2\alpha}s_{h,T}(\chi,\chi)\big)^{1/2}\\
&\le \widetilde{C}_0C_0^{1/2} \Big(\sum_{T\in\mathcal{T}} h_T^{2\alpha}s_{h,T}(u_h,u_h)\Big)^{1/2}\Big(\sum_{T\in\mathcal{T}}h_T^{-2\alpha}\norm{\mbf{h}^{\alpha-1/2}\jump{\chi}}{\partial T}^2 \Big)^{1/2},
\end{aligned}
\end{equation}
and we also have
\[
\begin{aligned}
\sum_{T\in\mathcal{T}}h_T^{-2\alpha}\norm{\mbf{h}^{\alpha-1/2}\jump{\chi}}{\partial T}^2&\le C\sum_{T\in\mathcal{T}}\norm{\mbf{h}^{-1/2}\jump{\chi}}{\partial T}^2=  C\sum_{T\in\mathcal{T}}\norm{\mbf{h}^{-1/2}\jump{u-\mathcal{E}(u_h)-\chi}}{\partial T}^2\\
&\le C\norm{\nabla(u-\mathcal{E}(u_h))}{\Omega}^2.
\end{aligned}
\]

Combining the above estimates and applying them to estimate the respective terms on the right-hand side of \eqref{first_bound_apost}, we arrive at the a posteriori bound.
\end{proof}

Trivially, Assumption \ref{stabilisation_bound} implies also the bound
 \begin{equation}
    \norm{\sqrt{A}\nabla(u-\mathcal{E}(u_h))}{\Omega}^2 
    \leq  
   C  \sum_{T\in\mathcal{T}}\big(\eta_T^2+\eta_{A,T}^2\big)+C\norm{\mbf{h}^{2\alpha -1/2}\jump{u_h}}{\Gamma}^2.
  \end{equation}
 
 \begin{remark} 
The estimators $\eta_{A,T}$ can be thought as data oscillation \cite{pryer}. Indeed, setting $v_h=u_h$ in \eqref{rem}, using the Poincar\'e-Friedrichs inequality $\norm{\mathcal{E}(u_h)}{\Omega}\le C \norm{\nabla\mathcal{E}(u_h)}{\Omega}$ on the right-hand side of \eqref{rem}, along with the uniform ellipticity of $A$, we can arrive at the stability estimate
\[
\norm{\sqrt{A}\nabla \mathcal{E}(u_h)}{\Omega}\le C \norm{f}{\Omega},
\]
which, in turn, implies
\[
\sum_{T\in\mathcal{T}}\eta_{A,T}^2 \le C \norm{A-\Pi A}{L^{\infty}(\Omega)}^2\norm{f}{\Omega}^2.
\]
\end{remark}

\begin{remark}
We remark that the hypothesis that $\mathcal{E}$ should satisfy \eqref{KP_stab} of the previous theorem does \emph{not} imply that the result is applicable only when the averaging operator $\mathcal{E}_s$ is used in the R-FEM formulation. It merely states that for any recovery for which \eqref{KP_stab} holds, the above a posteriori bound also holds.
\end{remark}

Using standard tools, a lower bound for the error $\norm{\sqrt{A}\nabla(u-\mathcal{E}(u_h))}{\Omega}^2$ can also be given.
\begin{theorem}
With the hypotheses of Theorem \ref{the:apost}, for every $T\in\mathcal{T}$, we have the bound
\[
   \eta_T^2 \le C \norm{\sqrt{A}\nabla(u-\mathcal{E}(u_h))}{\omega_T}^2+\eta_{A,T}^2,   
\]
where $\omega_T:=\cup_{T'\in\mathcal{T}:\partial T'\cap\partial T\neq \emptyset} T'$.
\end{theorem}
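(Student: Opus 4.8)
The plan is to run the standard Verf\"urth-type bubble-function argument for local efficiency. The one structural point worth isolating at the outset is that every bubble test function used below is supported in a single element or in a pair of neighbouring elements and vanishes on the corresponding boundary, so it belongs to $H^1_0(\Omega)$; hence I can test the \emph{continuous} weak formulation $a(u,v)=\ell(v)$ directly, and the stabilisation $s_h$ plays no role. I will also use local quasi-uniformity throughout to pass freely between $h_T$, $h_{T'}$ and $\mbf{h}|_e$ for neighbouring elements $T,T'$ sharing a facet $e$.

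\emph{Step 1 (element residual).} First I would fix $T\in\mathcal{T}$ and put $R_T:=f+\nabla\cdot\Pi A\nabla\mathcal{E}(u_h)$; its ``discrete part'' $\nabla\cdot\Pi A\nabla\mathcal{E}(u_h)$ is polynomial on $T$, so up to the (standard, higher-order, suppressed) data oscillation $\norm{\mbf{h}(f-\Pi f)}{T}$ I may treat $R_T$ as a polynomial. With $b_T$ the interior bubble of $T$, set $v_T:=b_T R_T\in H^1_0(T)\subset H^1_0(\Omega)$, for which the classical bubble estimates give $\norm{R_T}{T}^2\lesssim\int_T R_T v_T\,\ud x$, $\norm{v_T}{T}\lesssim\norm{R_T}{T}$ and $\norm{\nabla v_T}{T}\lesssim h_T^{-1}\norm{R_T}{T}$. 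Using $\ell(v_T)=a(u,v_T)$, an elementwise integration by parts (the boundary term vanishing as $v_T|_{\partial T}=0$), and the insertion of $\pm A\nabla\mathcal{E}(u_h)$ one gets
\[
\int_T R_T v_T\,\ud x=\int_T A\nabla(u-\mathcal{E}(u_h))\cdot\nabla v_T\,\ud x+\int_T (A-\Pi A)\nabla\mathcal{E}(u_h)\cdot\nabla v_T\,\ud x,
\]
so that Cauchy--Schwarz together with the bubble bounds gives $\norm{\mbf{h}R_T}{T}\lesssim\norm{\sqrt{A}\nabla(u-\mathcal{E}(u_h))}{T}+\eta_{A,T}$.

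\emph{Step 2 (facet residual).} Next, for an interior facet $e=\partial T\cap\partial T'$ I let $J_e:=\jump{\Pi A\nabla\mathcal{E}(u_h)}$ on $e$ (a polynomial), $b_e$ the facet bubble supported on $\omega_e:=T\cup T'$, and $\widetilde J_e$ a polynomial extension of $J_e$ to $\omega_e$ (constant in the normal direction); then $v_e:=b_e\widetilde J_e\in H^1_0(\omega_e)\subset H^1_0(\Omega)$ satisfies $\norm{J_e}{e}^2\lesssim\int_e J_e v_e\,\ud s$, $\norm{v_e}{\omega_e}\lesssim h_e^{1/2}\norm{J_e}{e}$ and $\norm{\nabla v_e}{\omega_e}\lesssim h_e^{-1/2}\norm{J_e}{e}$. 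Integrating by parts on $T$ and $T'$ so the two facet contributions combine into $\int_e J_e v_e\,\ud s$, using $\ell(v_e)=a(u,v_e)$, and again inserting $\pm A\nabla\mathcal{E}(u_h)$ and the element residuals, the $f$-terms cancel and I am left with $R_{T''}$, $\nabla(u-\mathcal{E}(u_h))$ and $(A-\Pi A)\nabla\mathcal{E}(u_h)$ on $T''\in\{T,T'\}$. Cauchy--Schwarz, the bubble bounds, local quasi-uniformity, and the Step 1 estimate then yield $\norm{\mbf{h}^{1/2}J_e}{e}\lesssim\sum_{T''\in\{T,T'\}}\big(\norm{\sqrt{A}\nabla(u-\mathcal{E}(u_h))}{T''}+\eta_{A,T''}\big)$.

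\emph{Step 3 (assembly).} Since $\eta_T^2=\norm{\mbf{h}R_T}{T}^2+\ha\sum_{e\subset\partial T\setminus\partial\Omega}\norm{\mbf{h}^{1/2}J_e}{e}^2$ (with $\mbf{h}|_e\sim h_T$) and $T$ has at most $d+1$ facets, summing the Step 1 and Step 2 bounds over these facets — all participating elements lying in $\omega_T$ — gives
\[
\eta_T^2\lesssim\norm{\sqrt{A}\nabla(u-\mathcal{E}(u_h))}{\omega_T}^2+\sum_{T'\subset\omega_T}\eta_{A,T'}^2,
\]
which is the asserted estimate (with $\eta_{A,T}$ read as the patch oscillation $\sum_{T'\subset\omega_T}\eta_{A,T'}$). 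I do not anticipate a real obstacle here: the argument is the routine bubble-function machinery, and the only points requiring care are the tracking of the $\mbf{h}$-weights via local quasi-uniformity and the recognition that the terms generated by replacing $A$ by $\Pi A$ in the flux are \emph{precisely} the $\eta_{A,\cdot}$ contributions. If $f$ is not assumed piecewise polynomial, a standard data-oscillation term $\norm{\mbf{h}(f-\Pi f)}{\omega_T}$ must be added to the right-hand side; it is of higher order.
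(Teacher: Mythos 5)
Your bubble-function argument is correct and amounts precisely to the ``standard tools'' the paper invokes: the paper states this lower bound without proof, and your write-up supplies exactly the intended Verf\"urth-type efficiency argument (interior and facet bubbles, testing the continuous problem with locally supported $H^1_0$ functions, and identifying the $A-\Pi A$ terms with $\eta_{A,\cdot}$). Your two caveats --- the data-oscillation term $\norm{\mbf{h}(f-\Pi f)}{\omega_T}$ unless $f$ is elementwise polynomial, and reading the right-hand side $\eta_{A,T}$ as the corresponding patch quantity over $\omega_T$ --- are the correct reading of the theorem as stated, not gaps in your proof.
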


\section{Implementation and Complexity}\label{sec:compl}

We shall now describe the implementation of the R-FEM and some of the properties of the resultant algebraic system. 

Let $N_C := \text{dim}\qp{V_h^s \cap H^1_{0}(\Omega)}$ and $N_D := \text{dim}\qp{V_h^r}$, the dimensions of the conforming and discontinuous spaces, respectively. The operator $\mathcal E : V_h^r \to V_h^s\cap H^1_{0}(\Omega)$ can be represented algebraically as a matrix $\vec E \in \reals^{N_C \times N_D}$ such that for $\vec u \in\reals^{N_D}$, $\vec E \vec u \in \reals^{N_C}$. Denote also by $\vec K_{\rm FEM} \in \reals^{N_C\times N_C}$ the stiffness matrix associated to the classical conforming finite element method \eqref{gabriel}. 
Then the algebraic form of \eqref{rem} is given by seeking $\vec u \in \reals^{N_D}$ such that
\begin{equation}
  \mathfrak A \vec u := \qp{\vec E^\transpose \vec K_{\rm FEM} \vec E + \vec S}\vec u =  \mathfrak{b}:=\vec E b ,
\end{equation}
 for $b\in \reals^{N_D}$ given by $b_i=\int_{\Omega}f\psi_i\ud x$, $i=1,\dots, N_D$ and $\psi_i\in V_h^r$ being a basis of $V_h^r$ and $\vec S$ being the algebraic representation of the stabilisation $s_h$.
For comparison's sake, we also introduce the dG stiffness matrix $\vec K_{IP} \in \reals^{N_D\times N_D}$ associated to the interior penalty dG method with solution $u_h^{IP}\in V_h^r$.

\begin{theorem}[Condition number estimate]\label{cond_num_est}
For $d=2$, let $\mathcal T$ be a locally quasiuniform triangulation, and assume that the basis functions of $V_h^r$ used in the computation have local support. Assume also that the recovery operator $\mathcal{E}$ satisfies  \eqref{KP_stab} and the stabilisation is such that Assumptions \ref{stabilisation_bound} and \ref{stabilisation_ass2} hold for $0\le \alpha\le 1$. Then, the $\ell_2$-condition number $ \kappa(\mathfrak A)$ of the R-FEM stiffness matrix $\mathfrak A$ satisfies
  \begin{equation}
    \kappa(\mathfrak A) \leq C (N_D+|\log(\underline{\mbf{h}}^2N_D)|),
  \end{equation}
with $\underline{\mbf{h}}:=\min_{\Omega} \mbf{h}$,  for $C$ a positive constant independent of the dimension of $\mathfrak{A}$. Also, under the same assumptions, for $d=3$, we have $
    \kappa(\mathfrak A) \leq C N_D.$
\end{theorem}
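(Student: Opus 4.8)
The plan is to bound the condition number $\kappa(\mathfrak{A}) = \|\mathfrak{A}\|_2 \|\mathfrak{A}^{-1}\|_2$ by obtaining, separately, an upper bound on the largest eigenvalue and a lower bound on the smallest eigenvalue of $\mathfrak{A} = \vec{E}^\transpose \vec{K}_{\rm FEM} \vec{E} + \vec{S}$, using the Rayleigh quotient characterization. For a vector $\vec{w} \in \reals^{N_D}$ with associated function $w_h \in V_h^r$, one has $\vec{w}^\transpose \mathfrak{A}\vec{w} = \norm{\sqrt{A}\nabla\mathcal{E}(w_h)}{\Omega}^2 + s_h(w_h,w_h)$, which by \eqref{eq:coercivity-ident} and the norm-equivalence argument following it is precisely $B(w_h,w_h)$, a norm on $V_h^r$. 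The task therefore reduces to comparing this energy expression against $\vec{w}^\transpose\vec{w} = \norm{\vec{w}}{2}^2$.

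First I would treat the upper bound. Using Assumption \ref{stabilisation_bound} (with $0\le\alpha\le1$) to bound $s_h(w_h,w_h)$ by $C_0\norm{\mbf{h}^{\alpha-1/2}\jump{w_h}}{\Gamma}^2$, a standard inverse/trace inequality bounds jump terms by $L^2$-norms over elements, and then the stability of $\mathcal{E}$ (Lemma \ref{lemma_E1} with $|\alpha|=1$) together with inverse estimates bounds $\norm{\sqrt{A}\nabla\mathcal{E}(w_h)}{\Omega}^2$ by $C\underline{\mbf{h}}^{-2}\norm{w_h}{\Omega}^2$, since $\norm{\nabla\mathcal{E}(w_h)}{\Omega}\le\norm{\nabla(\mathcal{E}(w_h)-w_h)}{\Omega}+\norm{\nabla w_h}{\Omega}$ and both pieces are controlled by $\underline{\mbf{h}}^{-1}\norm{w_h}{\Omega}$. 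Finally, the equivalence between $\norm{w_h}{\Omega}^2$ and the scaled Euclidean norm of coefficients on a locally quasi-uniform mesh with locally supported basis gives $\|\mathfrak{A}\|_2 \le C\underline{\mbf{h}}^{d-2}\cdot$(something). The smallest eigenvalue is the more delicate part: one needs a lower bound $B(w_h,w_h) \ge c\,\lambda_{\min}\,\norm{\vec{w}}{2}^2$. Here the key is a Poincaré–Friedrichs-type inequality adapted to the broken space — one controls $\norm{w_h}{\Omega}$ by $\norm{w_h-\mathcal{E}(w_h)}{\Omega}+\norm{\mathcal{E}(w_h)}{\Omega}$, bounds the first term via Lemma \ref{lemma_E1} (with $\alpha=0$) by $C\norm{\mbf{h}^{1/2}\jump{w_h}}{\Gamma}\le C\underline{\mbf{h}}^{1/2}\cdot\underline{\mbf{h}}^{-\alpha}\cdot(s_h$-contribution$)^{1/2}$ using Assumption \ref{stabilisation_bound} from below, and the second term by a Poincaré inequality on $H^1_0(\Omega)$ applied to $\mathcal{E}(w_h)$, giving $\norm{\mathcal{E}(w_h)}{\Omega}\le C\norm{\nabla\mathcal{E}(w_h)}{\Omega}$. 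Translating back to coefficients yields $\lambda_{\min}(\mathfrak{A}) \ge c\,\underline{\mbf{h}}^d$ modulo the logarithmic factor.

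The main obstacle is tracking the logarithmic term in the $d=2$ bound and showing it does not appear for $d=3$. This is where the argument must be handled with care: the gradient-of-recovery term $\norm{\nabla\mathcal{E}(w_h)}{\Omega}^2$ does not admit a sharp inverse bound $\underline{\mbf{h}}^{-2}\norm{w_h}{\Omega}^2$ uniformly, and instead a logarithmic refinement of the inverse inequality for piecewise polynomials in two dimensions (of the type $\norm{v_h}{L^\infty}\le C(1+|\log\mbf{h}|)^{1/2}\norm{v_h}{H^1}$, or equivalently the $H^1$–$L^2$ inverse estimate with a log factor in $2$D) enters. The strategy is to invoke the known fact that, for locally quasi-uniform meshes in $\reals^2$, the relevant bound carries a factor $|\log\underline{\mbf{h}}|$, whereas in $\reals^3$ the corresponding Sobolev embedding is genuinely worse (no log) and the scaling $N_D \sim \underline{\mbf{h}}^{-d}$ absorbs everything; substituting $N_D$ for the powers of $\underline{\mbf{h}}$ via $\underline{\mbf{h}}^2 N_D \sim 1$ up to constants on quasi-uniform parts then produces the stated form $C(N_D + |\log(\underline{\mbf{h}}^2 N_D)|)$. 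The remaining steps — inverse estimates, the coefficient/norm equivalence, and assembling $\|\mathfrak{A}\|_2/\lambda_{\min}(\mathfrak{A})$ — are routine and I would not dwell on them.
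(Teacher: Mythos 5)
Your overall frame (estimate the Rayleigh quotient of $\mathfrak A$ from above and below against $\vec v^{\transpose}\vec v$) is the same as the paper's, and your upper bound is close in spirit to what the paper does, although the paper argues element-by-element through $\norm{v_h}{L^\infty(T)}$ and the local support of the basis, which is what makes $\Lambda\le C$ come out uniformly on \emph{locally} quasi-uniform meshes; your global version $\norm{\sqrt A\nabla\mathcal E(w_h)}{\Omega}^2\le C\underline{\mbf{h}}^{-2}\norm{w_h}{\Omega}^2$ followed by the coefficient equivalence already loses sharpness on graded meshes. The genuine gap, however, is in the lower bound for the smallest eigenvalue. Your route (Poincar\'e--Friedrichs for $\mathcal E(w_h)$, Lemma \ref{lemma_E1} plus Assumption \ref{stabilisation_bound} for $w_h-\mathcal E(w_h)$, then coefficient scaling) only yields $\lambda_{\min}(\mathfrak A)\gtrsim \underline{\mbf{h}}^{\,d}$, hence $\kappa(\mathfrak A)\lesssim \underline{\mbf{h}}^{-d}$. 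This coincides with the claimed bound only when $\underline{\mbf{h}}^{\,d}N_D\sim 1$, i.e.\ under \emph{global} quasi-uniformity; the theorem assumes only local quasi-uniformity, and for graded meshes $\underline{\mbf{h}}^{-d}$ can be arbitrarily larger than $N_D$. Your closing step, ``substituting $N_D$ for the powers of $\underline{\mbf{h}}$ via $\underline{\mbf{h}}^2N_D\sim 1$ on quasi-uniform parts,'' therefore does not follow from the hypotheses, and the $d=3$ claim is asserted rather than derived for the same reason. Likewise, the logarithm is not obtained by citing a log-refined inverse or discrete Sobolev inequality as a known fact; in your write-up it remains an invocation, not a proof.

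The missing ingredient is the Bank--Scott-type $L^p$ argument that the paper runs (its proof follows Theorem 4.1 of \cite{bs}): for $p\in(2,\infty]$ one bounds $\vec v^{\transpose}\vec v\le C\sum_{T}\norm{v_h}{L^\infty(T)}^2\le C\sum_T h_T^{-4/p}\norm{v_h}{L^p(T)}^2$, applies H\"older over the elements to extract the factor $\big(\sum_T h_T^{-4/(p-2)}\big)^{1-2/p}\le C\,(\underline{\mbf{h}}^2N_D)^{-2/p}N_D$, splits $v_h=\mathcal E(v_h)+(v_h-\mathcal E(v_h))$, controls the conforming part by the two-dimensional Sobolev embedding $H^1_0\hookrightarrow L^p$ whose constant grows like $\sqrt p$, and the nonconforming remainder by \eqref{KP_stab} together with the lower bound in Assumption \ref{stabilisation_bound}; choosing $p=\max\{2,|\log(\underline{\mbf{h}}^2N_D)|\}$ makes $(\underline{\mbf{h}}^2N_D)^{-2/p}$ bounded, and it is precisely the $p$-dependence of the embedding constant that produces the logarithmic term for $d=2$, while for $d=3$ the fixed embedding into $L^6$ gives the log-free bound $CN_D$. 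Without this device (or an equivalent one expressing the bound in terms of $N_D$ rather than $\underline{\mbf{h}}$), your proposal proves a weaker statement than the theorem as stated.
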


\begin{proof} Without loss of generality, we assume $\mbf{h}\le 1$. It is sufficient to find  $\lambda,\Lambda>0$ such that $\lambda\vec v^\transpose \vec v \le
\vec v^\transpose \mathfrak{A}\vec v \le \Lambda \vec v^\transpose \vec v $ for all $\vec v\in \reals^{N_D}$; then  $\kappa(\mathfrak A)\le \Lambda/\lambda$. 
From \eqref{KP_stab} and Assumption \ref{stabilisation_bound}, we have, respectively,
\[
\begin{split}
\vec v^\transpose \mathfrak{A}\vec v & = a(\mathcal{E}(v_h),\mathcal{E}(v_h))+s_h(v_h,v_h) 
\le  \norm{\sqrt{A}\nabla\mathcal{E}(v_h)}{\Omega}^2
     + C_0\norm{\mbf{h}^{\alpha-1/2}\jump{v_h}}{\Gamma}^2 \\
&     \le C\Big(
      \sum_{T\in\mathcal{T}}\norm{\nabla v_h}{T}^2
     + \norm{\mbf{h}^{\min\{\alpha,0\}-1/2}\jump{v_h}}{\Gamma}^2\Big)\\
&     \le C \sum_{T\in\mathcal{T}} \big( 1	     
      +h_T^{2\min\{\alpha,0\}}\big)	\norm{v_h}{L^\infty(T)}^2
		\le C  \underline{\mbf{h}}^{2\min\{\alpha,0\}}   \vec v^\transpose \vec v\le C   \vec v^\transpose \vec v,
     \end{split}
\]
since the basis functions used have local support.

Now, working analogously as in the proof of Theorem 4.1 from \cite{bs}, for $p\in(2,\infty]$, we have
\[
\begin{split}
  \vec v^\transpose \vec v &\le  C \sum_{T\in\mathcal{T}} \norm{v_h}{L^{\infty}(T)}^2
  \le C \sum_{T\in\mathcal{T}} h_T^{-4/p}\norm{v_h}{L^p(T)}^2
  \le C \Big(\sum_{T\in\mathcal{T}} h_T^{-4/(p-2)}\Big)^{1-2/p}\norm{v_h}{L^p(\Omega)}^2\\
&\le C \Big(\sum_{T\in\mathcal{T}} h_T^{-4/(p-2)}\Big)^{1-2/p}\big(\norm{\mathcal{E}(v_h)}{L^p(\Omega)}^2
+\norm{\mbf{h}^{(p-2)/2p}(v_h-\mathcal{E}(v_h))}{\Omega}^2\big)
\\
&\le C \Big(\sum_{T\in\mathcal{T}} h_T^{-4/(p-2)}\Big)^{1-2/p}\big(p\norm{\nabla \mathcal{E}(v_h)}{\Omega}^2
+\norm{\mbf{h}^{1-1/p}\jump{v_h}}{\Gamma}^2\big)
\\
&\le C (p+\bar{\mbf{h}}^{3/2-1/p-\alpha}) \Big(\sum_{T\in\mathcal{T}} h_T^{-4/(p-2)}\Big)^{1-2/p}\big(\norm{\sqrt{A}\nabla \mathcal{E}(v_h)}{\Omega}^2
+c_0\norm{\mbf{h}^{\alpha-1/2}\jump{v_h}}{\Gamma}^2\big)
\\
&\le C p  (\underline{\mbf{h}}^2N_D)^{-2/p} N_D\vec v^\transpose \mathfrak{A}\vec v,
 \end{split}
\]
using standard inverse estimates, H\"older's inequality, \eqref{KP_stab}, Sobolev Imbedding Theorem, and setting $\bar{\mbf{h}}:=\max_{\Omega}\mbf{h}$, respectively. Selecting now $p=\max\{2,|\log(\underline{\mbf{h}}^2N_D)|\}$, the result follows.

The result for $d=3$ is completely analogous, but it is omitted for brevity, as it essentially follows the steps from \cite{bs}.
\end{proof}

 The above result shows that R-FEM enjoys similar conditioning to standard FEM and dG methods for $0\le \alpha\le 1$. 

For the case $r=s-1$, for $s=1,2,3$ discussed above, an inspection of the proof of Theorem \ref{cond_num_est} shows that the conditioning of $\frak A$ deteriorates when $\alpha>1$. This is an interesting challenge for R-FEM implementation, which will be investigated further elsewhere. Crucially, however, $\mathfrak A$ is the respective stiffness matrix of the linear system solving for $u_h$ and \emph{not} of the recovered solution $\mathcal{E}(u_h)$, for which the respective linear system is expected to be better conditioned in practice. This could be a starting point for the development of preconditioning strategies for R-FEM.

\subsection{Matrix structure}

Due to the compact support of the finite element basis functions, linear systems resulting from the finite element discretisation of PDEs are typically sparse. Sparsity of the stiffness matrix is traditionally an indicator of complexity of the solution of the respective linear system. To this end, we begin by studing the sparsity of the R-FEM stiffness matrix and compare with FEM and dG counterparts. 

The recovery procedure can be implemented through local \emph{transition matrices}. These are defined by locally rewriting the basis functions of the underlying nonconforming space in terms of the space used for reconstruction.

\begin{example}[$\cE : V_h^0 \to V_h^1\cap H^1_0(\Omega)$]
  Let $M_0 = \dim{\poly{0}(K)} = 1$ and $M_1 = \dim{\poly{1}(K)} = 3$. The local transistion matrix is a mapping which takes the form $\vec T_K \in \reals^{M_0 \times M_1}$ where
  \begin{equation*}
    \vec T_K =
    \begin{bmatrix}
      1, 1, 1
    \end{bmatrix}.
  \end{equation*}
  The global recovery matrix $\vec E$ can then be constructed by appropriately weighting and summing over all elements.
\begin{figure}[h!]
\caption{The degrees of freedom of the $\poly{0}$ dG space and that of the reconstructed space $\poly{1}$.}
\label{fig:p0p1dofs}
  \begin{center}
    \begin{tikzpicture}      
      
      % draw the left subtriangle in original place
      \path[fill=green!40] 
      (0,0) node (N0) {}
      -- (2,0) node (N5) {}
      -- (2,2) node (N2) {}
      -- cycle;
      
      % a nodez for labeling the coarse triangle
      \path[coordinate]
      (2,.67) node (K) {};
      
      % draw the left subtriangle in shifted place
      \path[fill=green!40,xshift=6cm] 
      (0,0) node (NL1) {}
      -- (2,0) node (NL2) {}
      -- (2,2) node (NL0) {}
      -- cycle;
      
      % two nodez for labeling the left subtriangle
      \foreach \s in {0,1}{
	\path[coordinate,xshift=6cm*\s]
	(1.33,.67) node (L\s) {};
      }
      
      % draw the right subtriangle in original place
      \path[fill=green!40]
      (4,0) node (N1) {}
      -- (2,2)
      -- (2,0)
      -- cycle
      ;
      
      % draw the right subtriangle in shifted place
      \path[fill=green!40,xshift=6cm]
      (4,0) node (NR0) {}
      -- (2,2) node (NR1) {}
      -- (2,0) node (NR2) {}
      -- cycle
      ;
      
      % two nodez for labeling the right subtriangle
      \foreach \s in {0,1}{
	\path[coordinate,xshift=6cm*\s]
	(2.67,.67) node (R\s) {};
      }

      \node (KL0) at (NL0) [circle,inner sep=1pt,fill=blue!30,draw] {3};
      \node (KL0) at (NL1) [circle,inner sep=1pt,fill=blue!30,draw] {1};
      \node (KR0) at (NR0) [circle,inner sep=1pt,fill=blue!30,draw] {2};
      
      % label original triangle
      \node at (K) [circle,inner sep=1pt,fill=blue!30,draw] { 1 }; %center
      \node at (L1) [rectangle,inner sep=1pt,xshift=.6cm,yshift=-.1cm ] {};
      
      \draw[->] (4.5,0.9) -- (5.5,0.9);
      %node[pos=0.5,above,yshift=.2cm] {$\cE$};
    \end{tikzpicture}
\end{center}
\vspace{-.8cm}
\end{figure}  
\end{example}

\begin{example}[$\cE : V_h^1 \to V_h^2\cap H^1_0(\Omega)$]
For $M_0 = \dim{\poly{1}(K)} = 3$ and $M_1 = \dim{\poly{2}(K)} = 6$. The local transition matrix $\vec T_K \in \reals^{M_0 \times M_1}$ is given by
  \begin{equation*}
    \vec T_K =
    \begin{bmatrix}
      1 & 0 & 0 & 0 & 1/2 & 1/2
      \\
      0 & 1 & 0 & 1/2 & 0 & 1/2
      \\
      0 & 0 & 1 & 1/2 & 1/2 & 0
    \end{bmatrix}.
  \end{equation*}
 % The global recovery matrix $\vec E$ can then be constructed by appropriately summing over all elements.
\begin{figure}[]
\caption{The degrees of freedom of the $\poly{1}$ dG space and that of the reconstructed space $\poly{2}$.}
\label{fig:p1p2dofs}
  \begin{center}
    \begin{tikzpicture}      
      % draw the left subtriangle in original place
      \path[fill=green!40] 
      (0,0) node (N0) {}
      -- (2,0) node (N5) {}
      -- (2,2) node (N2) {}
      -- cycle;
      
      % a nodez for labeling the coarse triangle
      \path[coordinate]
      (2,.67) node (K) {};
      
      % draw the left subtriangle in shifted place
      \path[fill=green!40,xshift=6cm] 
      (0,0) node (NL1) {}
      -- (2,0) node (NL2) {}
      -- (2,2) node (NL0) {}
      -- cycle;
      
      % two nodez for labeling the left subtriangle
      \foreach \s in {0,1}{
	\path[coordinate,xshift=6cm*\s]
	(1.33,.67) node (L\s) {};
      }
      
      % draw the right subtriangle in original place
      \path[fill=green!40]
      (4,0) node (N1) {}
      -- (2,2)
      -- (2,0)
      -- cycle
      ;
      
      % draw the right subtriangle in shifted place
      \path[fill=green!40,xshift=6cm]
      (4,0) node (NR0) {}
      -- (2,2) node (NR1) {}
      -- (2,0) node (NR2) {}
      -- cycle
      ;
      
      % two nodez for labeling the right subtriangle
      \foreach \s in {0,1}{
	\path[coordinate,xshift=6cm*\s]
	(2.67,.67) node (R\s) {};
      }
      
      % define the non-nodal DOF for coarse triangle
      \path[coordinate]
      (3,1) node (N3) {}
      (1,1) node (N4) {}
      ;
      
      % define the non-nodal DOF for shifted left subtriangle
      \path[coordinate,xshift=6cm]
      (1,0) node (NL3) {}
      (2,1) node (NL4) {}
      (1,1) node (NL5) {}
      ;

      % define the non-nodal DOF for shifted right subtriangle
      \path[coordinate,xshift=6cm]
      (2,1) node (NR3) {}
      (3,0) node (NR4) {}
      (3,1) node (NR5) {}
      ;      
      
      % draw the circles for the dofs on coarse triangle
      \node (K0) at (N0) [circle,inner sep=1pt,fill=blue!30,draw] {1};
      \node (K1) at (N1) [circle,inner sep=1pt,fill=blue!30,draw] {2};
      \node (K2) at (N2) [circle,inner sep=1pt,fill=blue!30,draw] {3};
      
      % draw the circles for the old dofs on subtriangles
      \foreach \s in {0,1,2}{
	\node (KL0) at (NL0) [circle,inner sep=1pt,fill=blue!30,draw] {3};
	\node (KR0) at (NR0) [circle,inner sep=1pt,fill=blue!30,draw] {2};
        \node (KL1) at (NL1) [circle,inner sep=1pt,fill=blue!30,draw] {1};

        \node (KL5) at (NL5) [circle,inner sep=1pt,fill=blue!30,draw] {5};
        \node (KR5) at (NR5) [circle,inner sep=1pt,fill=blue!30,draw] {4};
        \node (KR4) at (NR2) [circle,inner sep=1pt,fill=blue!30,draw] {6};
	
      }
      
      \draw[->] (4.5,0.9) -- (5.5,0.9);
    \end{tikzpicture}
\end{center}
\vspace{-.8cm}
\end{figure}
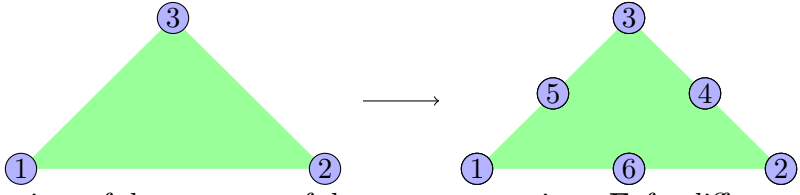  
\end{example}

When considering the resulting \emph{full} linear systems, we observe that the R-FEM yield system matrices that have the same dimension but are less sparse than the respective dG counterparts on the same space $\tilde{V}$. Indeed, the R-FEM stiffness matrix appears to have the sparsity pattern of a (conforming or non-conforming) finite element method applied to $V_h^s$ but the dimension of $V_h^r$, which may in general be larger than the one of $\tilde{V}$ on standard simplicial meshes. 
Moreover, the higher the regularity of the recovered space $\tilde{V}$, the larger the bandwidth of the resultant matrix. We refer to Figure \ref{fig:recovered-matrix} for a visualisation of the sparsity structure of the recovery matrix $\vec E$ and to Figure \ref{fig:full-matrix} for a comparison of the sparsity patterns for $\mathfrak{A}$, $K_{\rm FEM}$ and $K_{\rm IP}$ over the same mesh.

\begin{figure}[]
  \caption[]
  {\label{fig:recovered-matrix}
    %%%%%%%%%%%%%%%%%%%%%%%%%%%%%%%%%%%%%%%%%%%%%%%%%%%%%%%%%%%%%%%%%
    Illustrations of the structure of the recovery matrices, $\vec E$, for different recovery degrees over the same mesh. The mesh is of criss-cross type and is formed of $256$ elements.
    %%%%%%%%%%%%%%%%%%%%%%%%%%%%%%%%%%%%%%%%%%%%%%%%%%%%%%%%%%%%%%%%%% 
  }
  \begin{center}
        %%%%%%%%%%%%%%%%%%%%%%%%%%%%%%%%%%%%%%%%%%%%%%%%%%%%%%%%%%%%%%%%%%%% 
    \subfloat[{
        %%%%%%%%%%%%%%%%%%%%%%%%%%%%%%%%%%%%%%%%%%%%%%%%%%%%%%%%%%%%%%%
        {
               $\vec {E}\in\reals^{256\times 145}$ for $\mathcal{E}:V_h^0\to V_h^1\cap H^1_0(\Omega)$.
        }
      %%%%%%%%%%%%%%%%%%%%%%%%%%%%%%%%%%%%%%%%%%%%%%%%%%%%%%%%%%%%%%%% 
    }]{
      \includegraphics[scale=\figscale,width=0.3\figwidth]{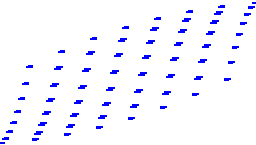}
    }    %%%%%%%%%%%%%%%%%%%%%%%%%%%%%%%%%%%%%%%%%%%%%%%%%%%%%%%%%%%%%%%%%%%%
    \hspace{.2cm}
    %%%%%%%%%%%%%%%%%%%%%%%%%%%%%%%%%%%%%%%%%%%%%%%%%%%%%%%%%%%%%%%%%%%% 
    \subfloat[{
      %%%%%%%%%%%%%%%%%%%%%%%%%%%%%%%%%%%%%%%%%%%%%%%%%%%%%%%%%%%%%%% 
        {
          $\vec {E}\in\reals^{768\times 145}$ for $\mathcal{E}:V_h^1\to V_h^1\cap H^1_0(\Omega)$. 
        }
        %%%%%%%%%%%%%%%%%%%%%%%%%%%%%%%%%%%%%%%%%%%%%%%%%%%%%%%%%%%%%%%% 
    }]{
      \includegraphics[scale=\figscale,width=0.31\figwidth]{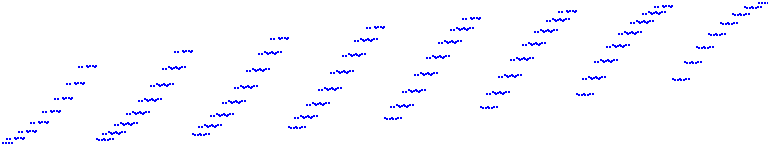}
    }
    %%%%%%%%%%%%%%%%%%%%%%%%%%%%%%%%%%%%%%%%%%%%%%%%%%%%%%%%%%%%%%%%%%%% 
    %%%%%%%%%%%%%%%%%%%%%%%%%%%%%%%%%%%%%%%%%%%%%%%%%%%%%%%%%%%%%%%%%%%% 
\hspace{.2cm}    
    \subfloat[{
        %%%%%%%%%%%%%%%%%%%%%%%%%%%%%%%%%%%%%%%%%%%%%%%%%%%%%%%%%%%%%%%
        {
        $\vec E \in\reals^{768\times 545}$  for $\mathcal{E}:V_h^1\to V_h^2\cap H^1_0(\Omega)$. 
        }
      %%%%%%%%%%%%%%%%%%%%%%%%%%%%%%%%%%%%%%%%%%%%%%%%%%%%%%%%%%%%%%%% 
    }]{
      \includegraphics[scale=\figscale,width=0.3\figwidth]{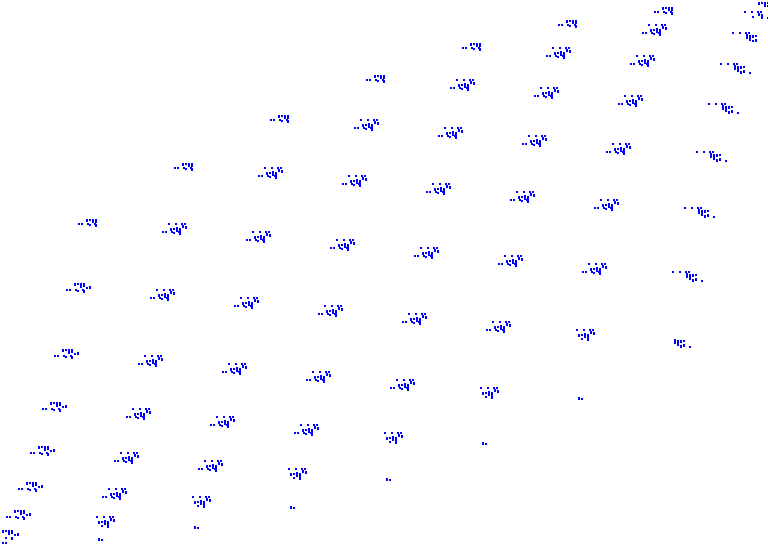}
    }
    %%%%%%%%%%%%%%%%%%%%%%%%%%%%%%%%%%%%%%%%%%%%%%%%%%%%%%%%%%%%%%%%%%%% 
  \end{center}
  \vspace{-1cm}
\end{figure}
\begin{figure}
  \caption[]
  {\label{fig:full-matrix}
    %%%%%%%%%%%%%%%%%%%%%%%%%%%%%%%%%%%%%%%%%%%%%%%%%%%%%%%%%%%%%%%%%
    Illustrations of the structure of the full R-FEM system matrix, $\mathfrak A$, for the R-FEM with $\mathcal{E}: V_h^0\to V_h^1\cap H^1_0(\Omega)$, and comparison to $\vec K_{\rm IP}$ and $\vec K_{\rm FEM}$ with discontinuous and continuous linear elements, respectively, over the same $256$-element criss-cross type mesh.
    %%%%%%%%%%%%%%%%%%%%%%%%%%%%%%%%%%%%%%%%%%%%%%%%%%%%%%%%%%%%%%%%%% 
  }
  \begin{center}
        %%%%%%%%%%%%%%%%%%%%%%%%%%%%%%%%%%%%%%%%%%%%%%%%%%%%%%%%%%%%%%%%%%%% 
    \subfloat[{
        %%%%%%%%%%%%%%%%%%%%%%%%%%%%%%%%%%%%%%%%%%%%%%%%%%%%%%%%%%%%%%%
        {
          $\mathfrak A\in\reals^{256\times 256}$
        }
      %%%%%%%%%%%%%%%%%%%%%%%%%%%%%%%%%%%%%%%%%%%%%%%%%%%%%%%%%%%%%%%% 
    }]{
      \includegraphics[scale=\figscale,width=0.3\figwidth]{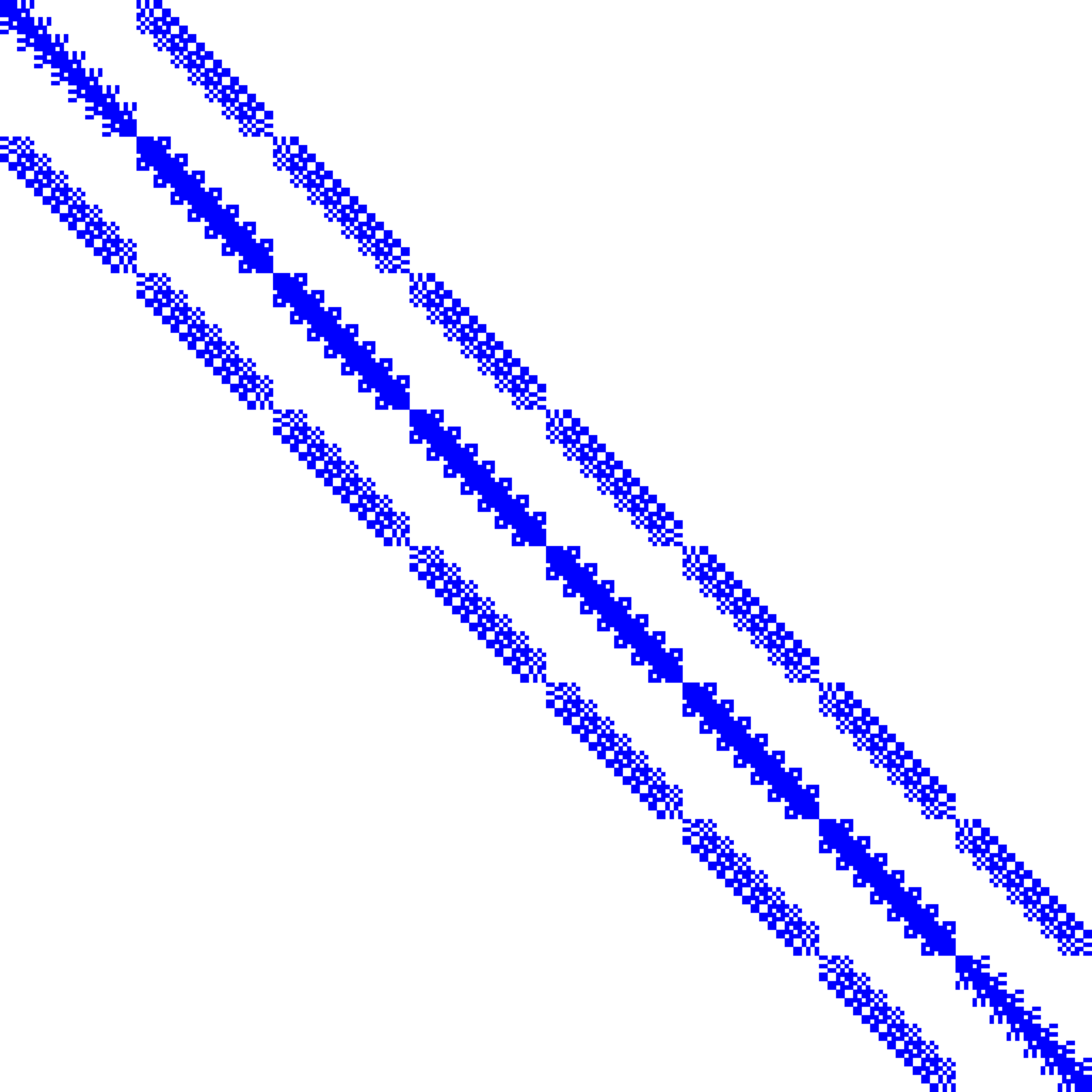}
    }    %%%%%%%%%%%%%%%%%%%%%%%%%%%%%%%%%%%%%%%%%%%%%%%%%%%%%%%%%%%%%%%%%%%%
    \hfill
    %%%%%%%%%%%%%%%%%%%%%%%%%%%%%%%%%%%%%%%%%%%%%%%%%%%%%%%%%%%%%%%%%%%% 
    \subfloat[{
        %%%%%%%%%%%%%%%%%%%%%%%%%%%%%%%%%%%%%%%%%%%%%%%%%%%%%%%%%%%%%%%
        {
         $\vec K_{\rm FEM} \in\reals^{145\times 145}$.
        }
      %%%%%%%%%%%%%%%%%%%%%%%%%%%%%%%%%%%%%%%%%%%%%%%%%%%%%%%%%%%%%%%% 
    }]{
      \includegraphics[scale=\figscale,width=0.3\figwidth]{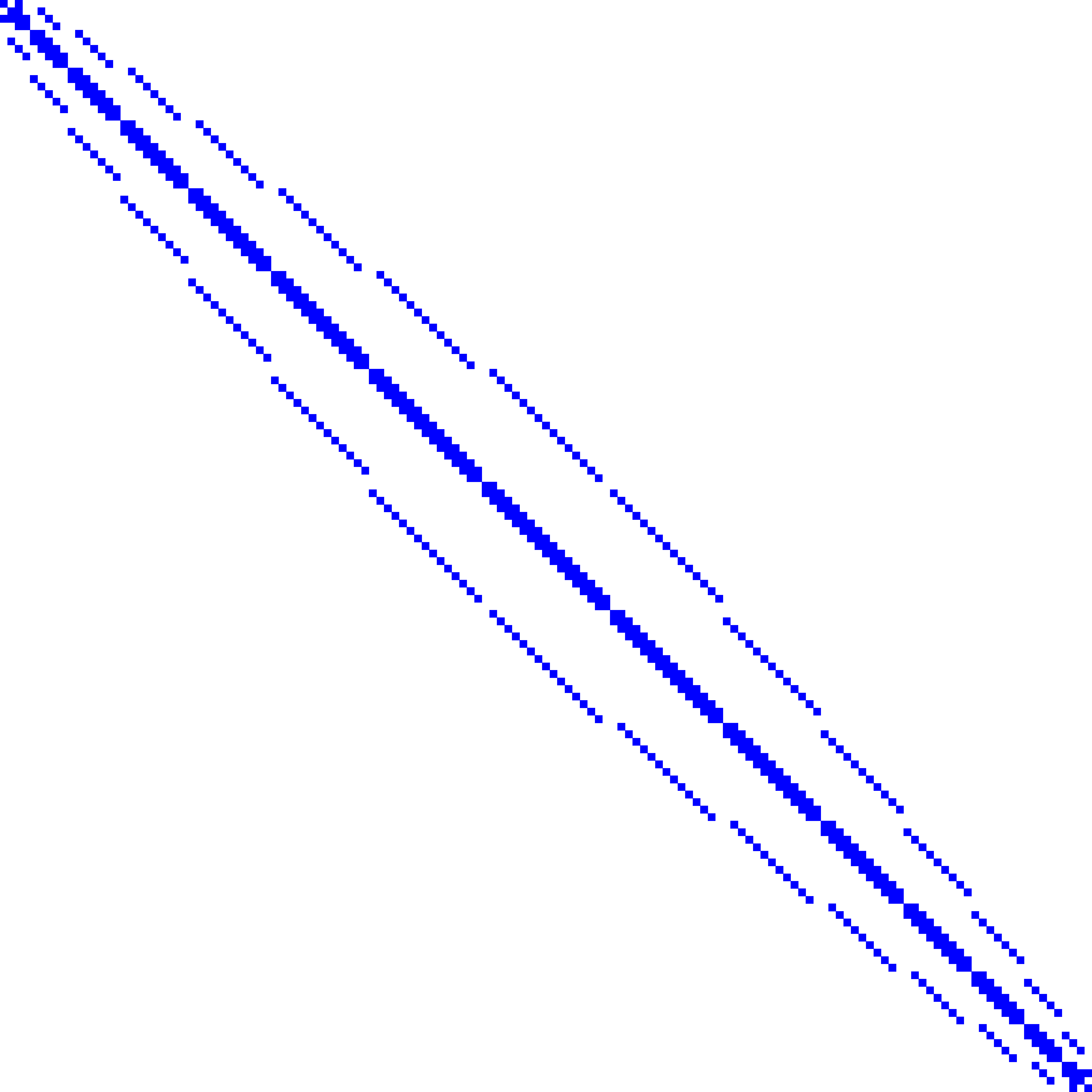}
    }
    \hfill
    \subfloat[{
        %%%%%%%%%%%%%%%%%%%%%%%%%%%%%%%%%%%%%%%%%%%%%%%%%%%%%%%%%%%%%%%%
        {$\vec K_{\rm IP} \in\reals^{768\times 768}$.
        }
        %%%%%%%%%%%%%%%%%%%%%%%%%%%%%%%%%%%%%%%%%%%%%%%%%%%%%%%%%%%%%%%% 
    }]{
      \includegraphics[scale=\figscale,width=0.3\figwidth]{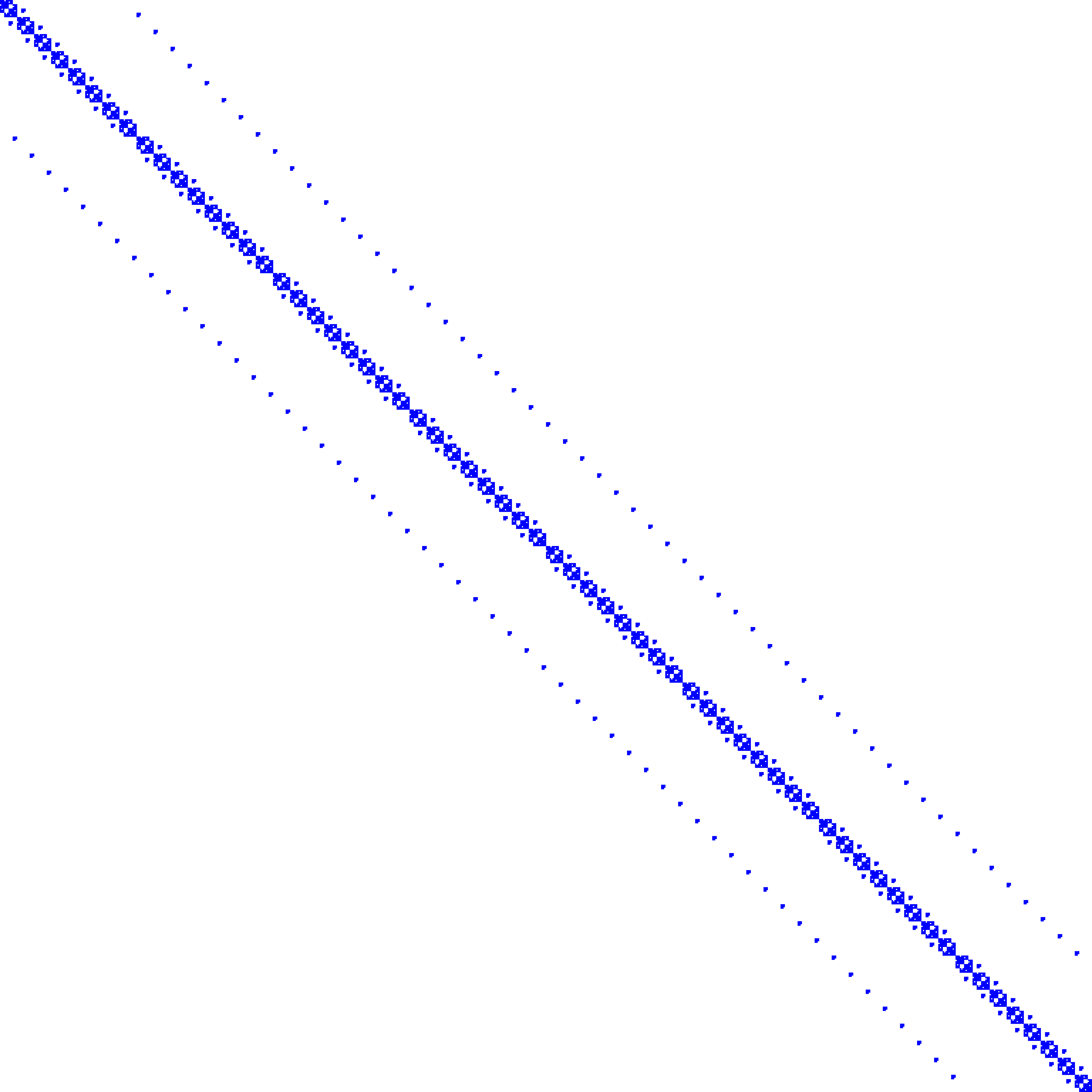}
    }
    %%%%%%%%%%%%%%%%%%%%%%%%%%%%%%%%%%%%%%%%%%%%%%%%%%%%%%%%%%%%%%%%%%%% 
  \end{center}
  \vspace{-1cm}
\end{figure}

\section{Numerical Benchmarking}
\label{sec:numerics}

We now illustrate the performance of the R-FEM through a series of numerical experiments. The implementation of R-FEM used is available at \cite{REMcode}.

\subsection{Test 1 -- Asymptotic behaviour approximating a smooth solution}\label{sec:test1}

As a first test, we consider $A=I$ and the domain $\Omega = [0,1]^2$. We fix $f$ such that the exact solution is given by
\begin{equation}
  \label{eq:smooth-sol}
  u(x,y) = \sin(\pi x) \sin(\pi y),
\end{equation}
and approximate $\Omega$ through a uniformly generated, criss-cross triangular type mesh to test the asymptotic behaviour of the numerical approximation. The results are summarised in Figure \ref{fig:convergence-smooth} (a) -- (c), and confirm the theoretical findings in Section \ref{sec:apriori}. More specifically, upon selecting $\sigma$ as in \eqref{gen_stab}, we witness $s=r+1$ order of convergence in the energy norm error $u-\mathcal{E}(u_h)$, for $r=0,1,2$ and respective $s+1=r+2$ order of convergence in the $L^2$-norm. 

As a further test, we consider the case $r=1$, $s=3$ and $\alpha=3$, i.e., when recovering discontinuous linear elements into conforming cubic elements. In this case, only first order convergence rate in the $H^1$-norm and second order convergence in the $L^2$-norm are observed, i.e., there is \emph{no} improvement in the convergence \emph{rate}. However, this recovery can be still beneficial for the \emph{constant} of the convergence rate: in Figure \ref{fig:convergence-smooth}(d) we observe a reduction in the absolute energy norm error against numerical degrees of freedom along with a comparison to the (classical) conforming linear FEM.
\begin{figure}[h!]
  \caption[]
  {\label{fig:convergence-smooth}
    %%%%%%%%%%%%%%%%%%%%%%%%%%%%%%%%%%%%%%%%%%%%%%%%%%%%%%%%%%%%%%%%%
    Convergence plots for the R-FEM (\ref{rem}) for Test 1. We measure error norms involving the R-FEM solution, $u_h$, and its reconstruction, $\mathcal E(u_h)$. 
    %%%%%%%%%%%%%%%%%%%%%%%%%%%%%%%%%%%%%%%%%%%%%%%%%%%%%%%%%%%%%%%%%% 
  }
  \begin{center}
        %%%%%%%%%%%%%%%%%%%%%%%%%%%%%%%%%%%%%%%%%%%%%%%%%%%%%%%%%%%%%%%%%%%% 
    \subfloat[{
        %%%%%%%%%%%%%%%%%%%%%%%%%%%%%%%%%%%%%%%%%%%%%%%%%%%%%%%%%%%%%%%
        {
          R-FEM with $\mathcal E : V_h^0 \to V_h^1\cap H^1_0(\Omega)$
        }
      %%%%%%%%%%%%%%%%%%%%%%%%%%%%%%%%%%%%%%%%%%%%%%%%%%%%%%%%%%%%%%%% 
    }]{
      \includegraphics[scale=\figscale,width=0.52\figwidth]{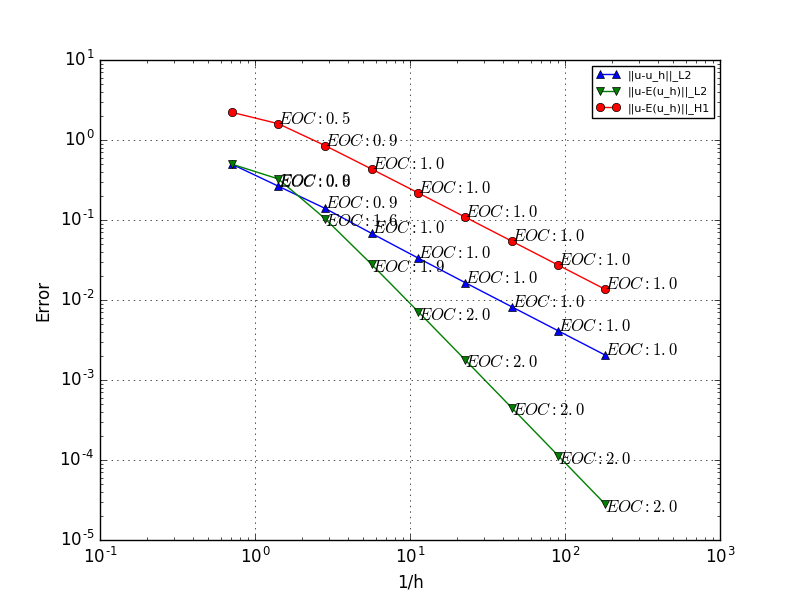}
    }    %%%%%%%%%%%%%%%%%%%%%%%%%%%%%%%%%%%%%%%%%%%%%%%%%%%%%%%%%%%%%%%%%%%
 %   \hfill
    %%%%%%%%%%%%%%%%%%%%%%%%%%%%%%%%%%%%%%%%%%%%%%%%%%%%%%%%%%%%%%%%%%%% 
    \subfloat[{
      %%%%%%%%%%%%%%%%%%%%%%%%%%%%%%%%%%%%%%%%%%%%%%%%%%%%%%%%%%%%%%% 
        {
          R-FEM with $\mathcal E : V_h^1 \to V_h^2\cap H^1_0(\Omega)$
        }
        %%%%%%%%%%%%%%%%%%%%%%%%%%%%%%%%%%%%%%%%%%%%%%%%%%%%%%%%%%%%%%%% 
    }]{
      \includegraphics[scale=\figscale,width=0.52\figwidth]{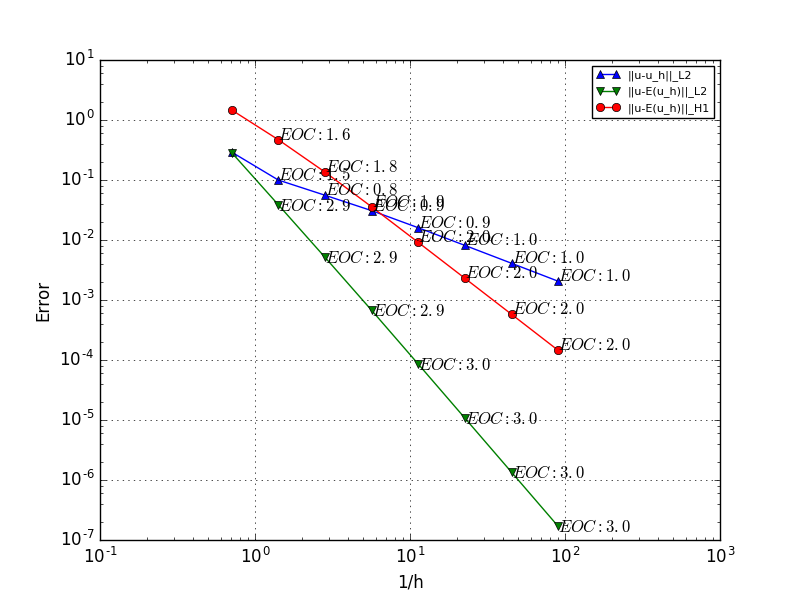}
    }\\
        \subfloat[{
      %%%%%%%%%%%%%%%%%%%%%%%%%%%%%%%%%%%%%%%%%%%%%%%%%%%%%%%%%%%%%%% 
        {
          R-FEM with $\mathcal E : V_h^2 \to V_h^3\cap H^1_0(\Omega)$
        }
        %%%%%%%%%%%%%%%%%%%%%%%%%%%%%%%%%%%%%%%%%%%%%%%%%%%%%%%%%%%%%%%% 
    }]{
      \includegraphics[scale=\figscale,width=0.52\figwidth]{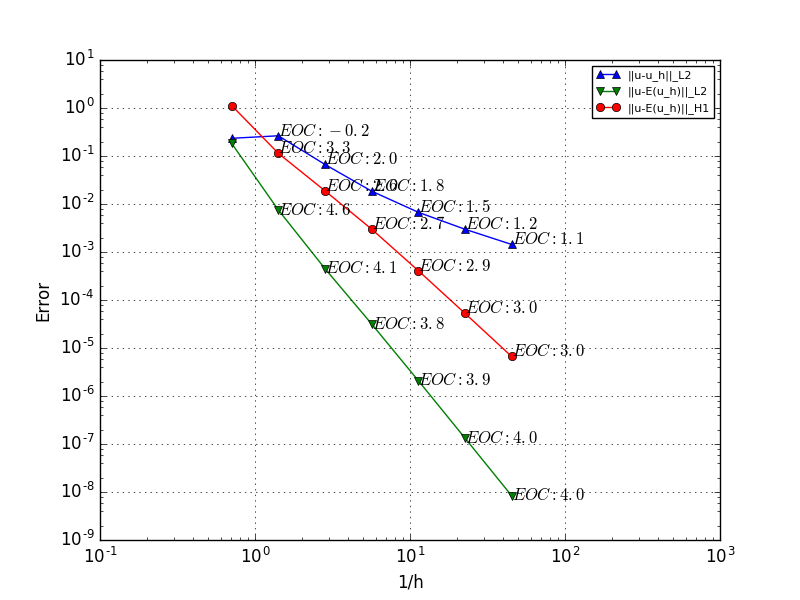}
    }
      \subfloat[{
      %%%%%%%%%%%%%%%%%%%%%%%%%%%%%%%%%%%%%%%%%%%%%%%%%%%%%%%%%%%%%%% 
        {
          R-FEM with $\mathcal E : V_h^1 \to V_h^3\cap H^1_0(\Omega)$ and linear FEM
        }
        %%%%%%%%%%%%%%%%%%%%%%%%%%%%%%%%%%%%%%%%%%%%%%%%%%%%%%%%%%%%%%%% 
    }]{
      \includegraphics[scale=\figscale,width=0.52\figwidth]{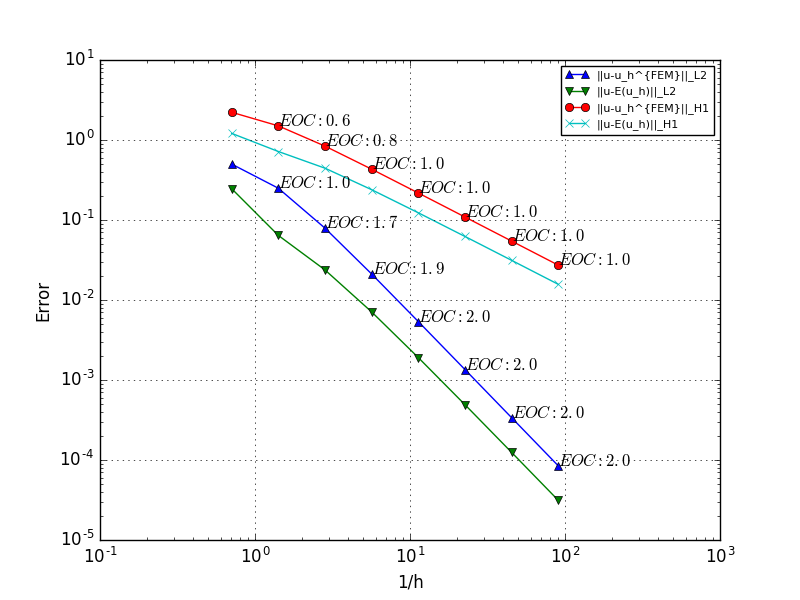}
    }
    %%%%%%%%%%%%%%%%%%%%%%%%%%%%%%%%%%%%%%%%%%%%%%%%%%%%%%%%%%%%%%%%%%%% 
  \end{center}
  \vspace{-1cm}
\end{figure}

Next, we compare the lowest order R-FEM method with $\mathcal E : V_h^0 \to V_h^1\cap H^1_0(\Omega)$ and \eqref{stabilisation} with \eqref{sigma_rneqs} against the dG interior penalty method for the same problem and perform a series of asymptotic benchmarks varying the penalty parameter $c_\sigma$. Recall that for R-FEM $c_\sigma > 0$ can be chosen arbitrarily, as opposed to interior penalty dG methods where $\sigma$ should be given by \eqref{sigma_reqs} with $c_\sigma$ sufficiently large so as to ensure coercivity of the bilinear form. The results are given in Figure \ref{fig:sigmatestdg}.  We fix the dG penalty parameter to be $\sigma_{\rm IP} = 10 \mbf{h}^{-1}$ and vary the R-FEM penalty parameter, $c_\sigma$. Note that R-FEM behaves comparably to the interior penalty method and there is a region of values of $c_\sigma$ where $\norm{u - \cE(u_h)}{L^2(\Omega)} < \norm{u - u_h^{IP}}{L^{2}(\Omega)}$. For small values of $c_\sigma$ we see $\norm{\nabla(u - \cE(u_h))}{\Omega} \approx \norm{u - u_h^{IP}}{\rm dG}$ with $\norm{\cdot}{\rm dG}$ denoting energy-like dG-norm defined as
\[
\norm{w}{\rm dG}:=\Big(\sum_{T\in\mathcal{T}}\norm{\nabla w}{T}^2+\norm{\sqrt{\sigma_{\rm IP}}\jump{w}}{\Gamma}\Big)^{1/2}.
\]
\begin{figure}
  \caption[]
  {\label{fig:sigmatestdg}
    %%%%%%%%%%%%%%%%%%%%%%%%%%%%%%%%%%%%%%%%%%%%%%%%%%%%%%%%%%%%%%%%%
    Convergence plots for lowest order R-FEM for (\ref{eq:smooth-sol}) and comparison with dG method for different values of $c_\sigma$. 
    %%%%%%%%%%%%%%%%%%%%%%%%%%%%%%%%%%%%%%%%%%%%%%%%%%%%%%%%%%%%%%%%%% 
  }
  \begin{center}
        %%%%%%%%%%%%%%%%%%%%%%%%%%%%%%%%%%%%%%%%%%%%%%%%%%%%%%%%%%%%%%%%%%%% 
    \subfloat[{
        %%%%%%%%%%%%%%%%%%%%%%%%%%%%%%%%%%%%%%%%%%%%%%%%%%%%%%%%%%%%%%%
        {
         Convergence for $\sigma = 10 \mbf{h}$.
        }
      %%%%%%%%%%%%%%%%%%%%%%%%%%%%%%%%%%%%%%%%%%%%%%%%%%%%%%%%%%%%%%%% 
    }]{
      \includegraphics[scale=\figscale,width=0.5\figwidth]{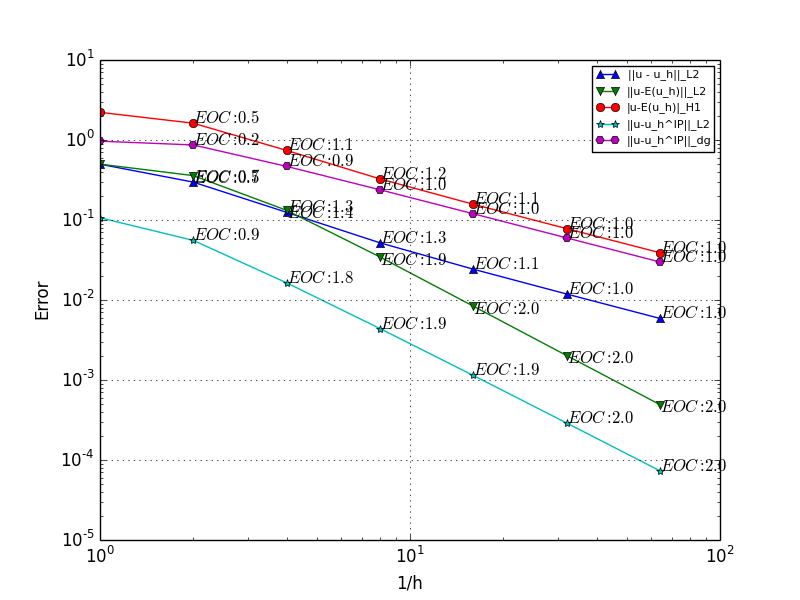}
    }    %%%%%%%%%%%%%%%%%%%%%%%%%%%%%%%%%%%%%%%%%%%%%%%%%%%%%%%%%%%%%%%%%%%
  %  \hfill
    %%%%%%%%%%%%%%%%%%%%%%%%%%%%%%%%%%%%%%%%%%%%%%%%%%%%%%%%%%%%%%%%%%%% 
    \subfloat[{
      %%%%%%%%%%%%%%%%%%%%%%%%%%%%%%%%%%%%%%%%%%%%%%%%%%%%%%%%%%%%%%% 
        {
          Convergence for $\sigma = \mbf{h}$.
        }
        %%%%%%%%%%%%%%%%%%%%%%%%%%%%%%%%%%%%%%%%%%%%%%%%%%%%%%%%%%%%%%%% 
    }]{
      \includegraphics[scale=\figscale,width=0.5\figwidth]{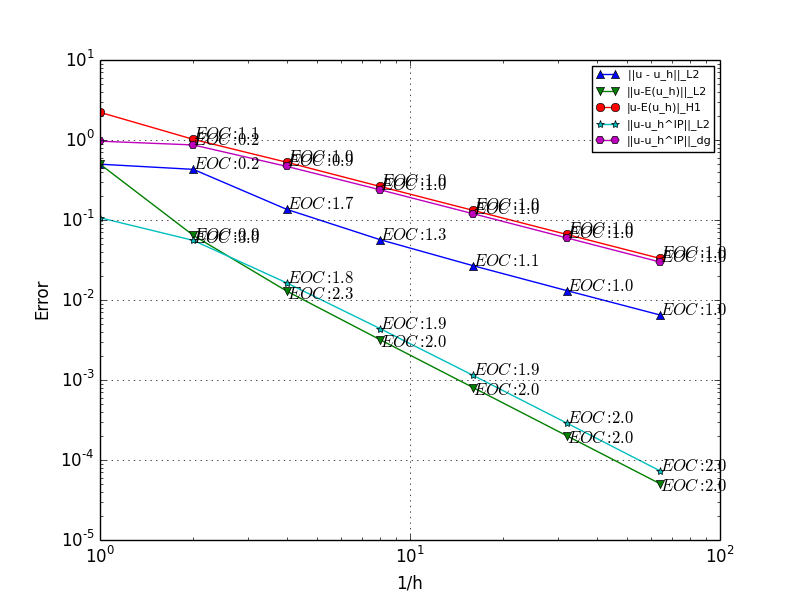}
    }
    %%%%%%%%%%%%%%%%%%%%%%%%%%%%%%%%%%%%%%%%%%%%%%%%%%%%%%%%%%%%%%%%%%%% 
    \hfill
    %%%%%%%%%%%%%%%%%%%%%%%%%%%%%%%%%%%%%%%%%%%%%%%%%%%%%%%%%%%%%%%%%%%% 
    \subfloat[{
      %%%%%%%%%%%%%%%%%%%%%%%%%%%%%%%%%%%%%%%%%%%%%%%%%%%%%%%%%%%%%%% 
        {
          Convergence for $\sigma = 0.1\mbf{h}$.
        }
        %%%%%%%%%%%%%%%%%%%%%%%%%%%%%%%%%%%%%%%%%%%%%%%%%%%%%%%%%%%%%%%% 
    }]{
      \includegraphics[scale=\figscale,width=0.5\figwidth]{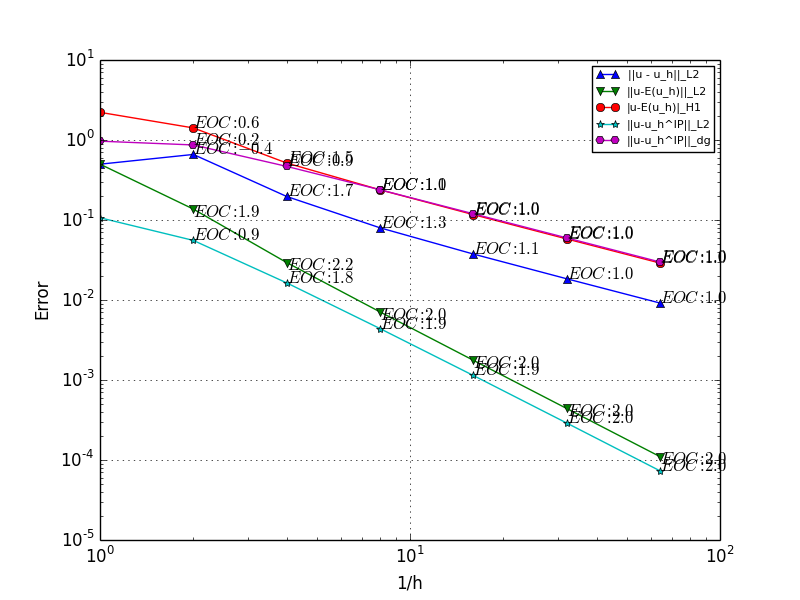}
    }
    %%%%%%%%%%%%%%%%%%%%%%%%%%%%%%%%%%%%%%%%%%%%%%%%%%%%%%%%%%%%%%%%%%%% 
   % \hfill
    %%%%%%%%%%%%%%%%%%%%%%%%%%%%%%%%%%%%%%%%%%%%%%%%%%%%%%%%%%%%%%%%%%%% 
    \subfloat[{
        %%%%%%%%%%%%%%%%%%%%%%%%%%%%%%%%%%%%%%%%%%%%%%%%%%%%%%%%%%%%%%% 
        {
         Convergence for $\sigma =0.01 \mbf{h}$.
        }
        %%%%%%%%%%%%%%%%%%%%%%%%%%%%%%%%%%%%%%%%%%%%%%%%%%%%%%%%%%%%%%%% 
    }]{
      \includegraphics[scale=\figscale,width=0.5\figwidth]{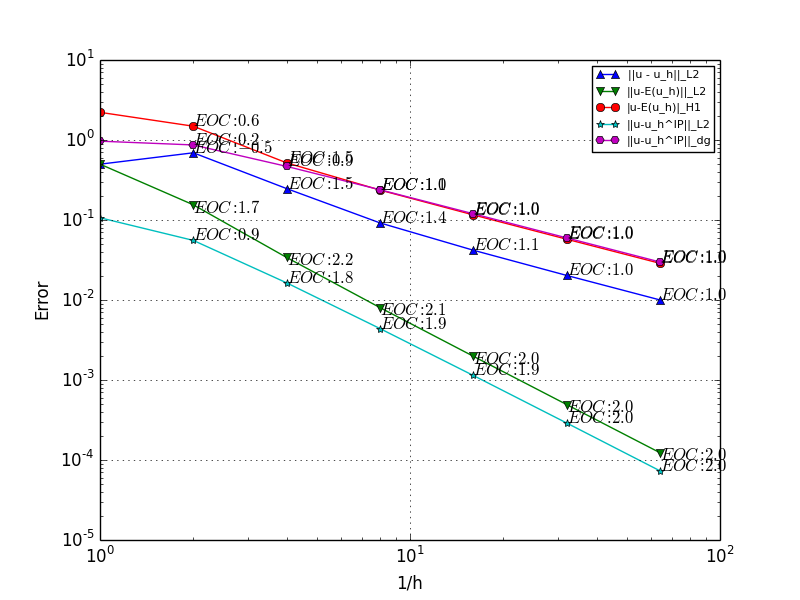}
    }
    %%%%%%%%%%%%%%%%%%%%%%%%%%%%%%%%%%%%%%%%%%%%%%%%%%%%%%%%%%%%%%%%%%%% 
  \end{center}
\end{figure}

\subsection{Test 2 -- Asymptotic behaviour approximating a singular solution}

We choose $f$ so that
\begin{equation}
  u(x,y) = r^{2/3} \sin\qp{2\theta/3} \qp{x^2 - 1} \qp{y^2 - 1},
\end{equation}
where $r^2 = x^2 + y^2$ and $\theta = \arctan(y/x)$ are the associated
polar coordinates, over the domain $\Omega=(-1,1)^2 \backslash
[0,1)\times (-1,0]$. We triangulate the domain with a criss-cross
triangular mesh. Note the solution is not $H^2(\Omega)$, as the radial
derivative is singular at the origin. We test the convergence of the
R-FEM approximation under a uniform $h$-refinement. We also take the
opportunity to test the a posteriori estimate given in Theorem
\ref{the:apost}. Results are summarised in Figure \ref{fig:lshape},
where convergence plots for $r=0,s=1$ are presented.
\begin{figure}[ht!]
  \caption[]
  {\label{fig:lshape}
    %%%%%%%%%%%%%%%%%%%%%%%%%%%%%%%%%%%%%%%%%%%%%%%%%%%%%%%%%%%%%%%%%
    Convergence plots for the R-FEM approximation of a solution with singular derivatives of the Poisson problem on the re-entrant corner problem. 
    %%%%%%%%%%%%%%%%%%%%%%%%%%%%%%%%%%%%%%%%%%%%%%%%%%%%%%%%%%%%%%%%%% 
  }
  \begin{center}
        %%%%%%%%%%%%%%%%%%%%%%%%%%%%%%%%%%%%%%%%%%%%%%%%%%%%%%%%%%%%%%%%%%%% 
    \subfloat[{
        %%%%%%%%%%%%%%%%%%%%%%%%%%%%%%%%%%%%%%%%%%%%%%%%%%%%%%%%%%%%%%%
        {
          R-FEM with $\mathcal E:\mathcal V_h^0\to V_h^1\cap H^1_0(\Omega)$.
        }
      %%%%%%%%%%%%%%%%%%%%%%%%%%%%%%%%%%%%%%%%%%%%%%%%%%%%%%%%%%%%%%%% 
    }]{
      \includegraphics[scale=.42]{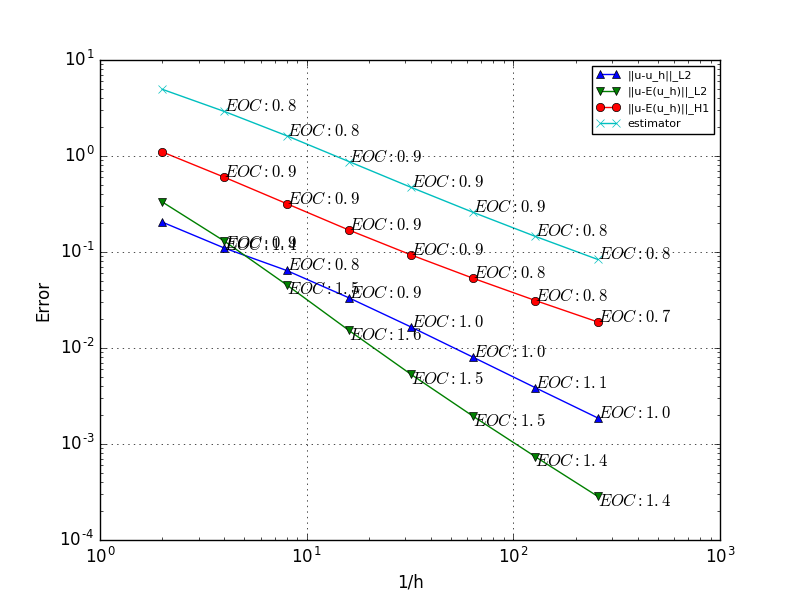}
    }    %%%%%%%%%%%%%%%%%%%%%%%%%%%%%%%%%%%%%%%%%%%%%%%%%%%%%%%%%%%%%%%%%%%
    \subfloat[{
      %%%%%%%%%%%%%%%%%%%%%%%%%%%%%%%%%%%%%%%%%%%%%%%%%%%%%%%%%%%%%%% 
        {
          Computed solution $u_h\in V_h^0$ from Fig.~\ref{fig:lshape}(a).
        }
        %%%%%%%%%%%%%%%%%%%%%%%%%%%%%%%%%%%%%%%%%%%%%%%%%%%%%%%%%%%%%%%% 
    }]{\hspace{-1cm}
      \includegraphics[scale=.14]{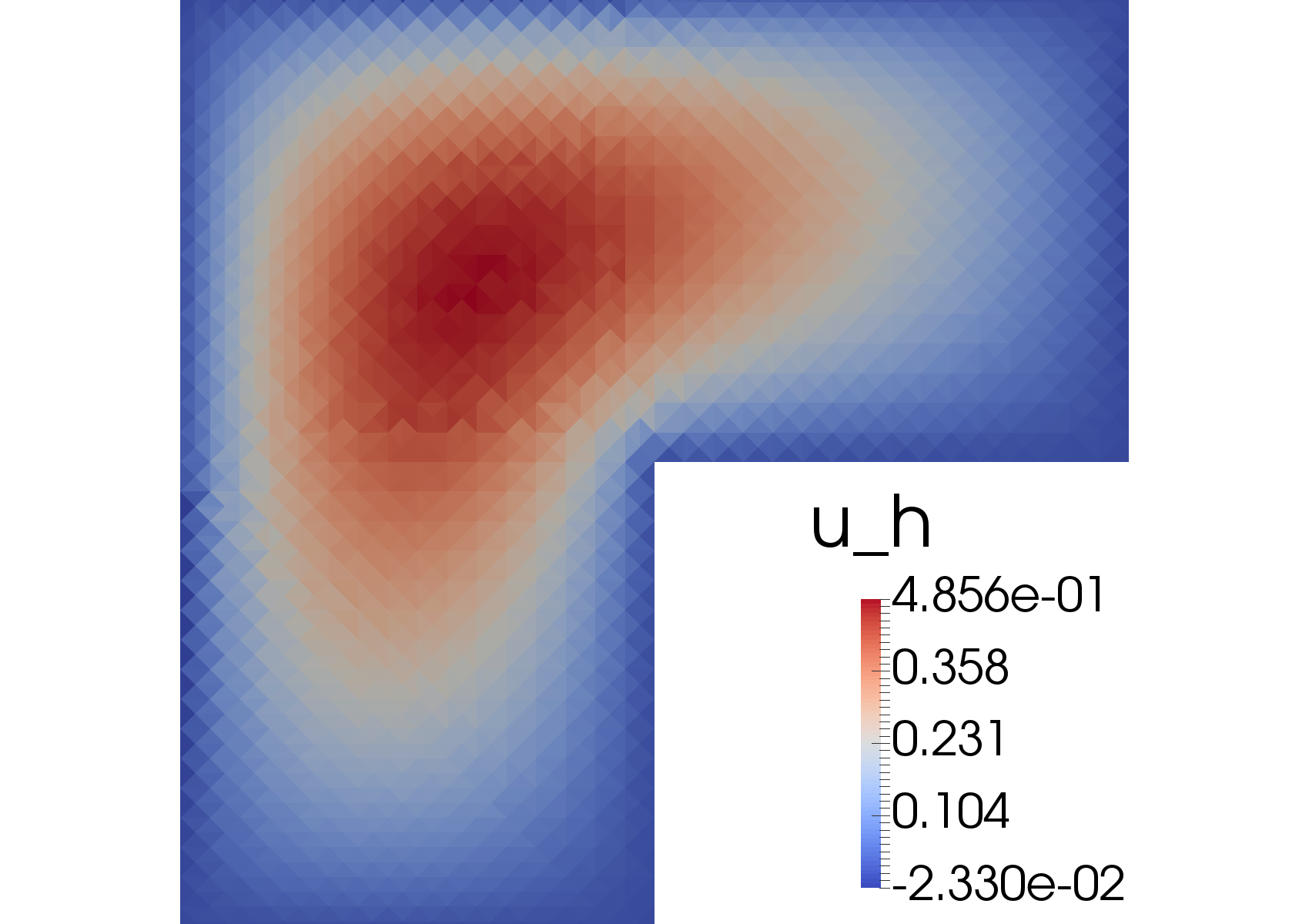}
    }   %%%%%%%%%%%%%%%%%%%%%%%%%%%%%%%%%%%%%%%%%%%%%%%%%%%%%%%%%%%%%%%%%%%

  \end{center}
  \vspace{-.7cm}
\end{figure}

\subsection{Test 3 -- Mesh adaptivity}

We continue with testing the behaviour of the estimator within a standard adaptive algorithm of the form
$
  \text{SOLVE} \to \text{ESTIMATE} \to \text{MARK} \to \text{REFINE}.
$
 In this context, over a coarse mesh we \emph{solve} the R-FEM problem, \emph{estimate} the error using the a posteriori error bound from Theorem \ref{the:apost}, \emph{mark} a subset of elements where the a posteriori indicator is large, and \emph{refine} those elements. The algorithm is then iterated until a prescribed tolerance is achieved. For the marking step we make use of the maximum strategy, that is we mark a set of elements $\mathcal M \subset \mathcal T_h$ such that
\begin{equation*}
  \sum_{T \in \mathcal M} \eta_T \leq \theta \max_{T \in T_h} \eta_T,
\end{equation*}
where $\theta \in (0,1]$ is the ratio of refined elements. In our experiments we choose $\theta = 0.25$. The numerical results are summarised in Figure \ref{fig:lshape-adapt}, where convergence plots for the lowest order ($r=0,s=1$) adaptive R-FEM approximation to the solution of the above problem are shown, indicating that the adaptive approximation is optimally convergent and its effectivity index, i.e., the ratio between the estimator and the exact error, is small and remains bounded.

\begin{figure}[ht!]
  \caption[]
  {\label{fig:lshape-adapt}
    %%%%%%%%%%%%%%%%%%%%%%%%%%%%%%%%%%%%%%%%%%%%%%%%%%%%%%%%%%%%%%%%%
    Convergence for the lowest order ($r=0,s=1$) adaptive R-FEM approximation, meshes produced and its effectivity.
    %%%%%%%%%%%%%%%%%%%%%%%%%%%%%%%%%%%%%%%%%%%%%%%%%%%%%%%%%%%%%%%%%% 
  }
  \begin{center}
        %%%%%%%%%%%%%%%%%%%%%%%%%%%%%%%%%%%%%%%%%%%%%%%%%%%%%%%%%%%%%%%%%%%% 
    \subfloat[{
        %%%%%%%%%%%%%%%%%%%%%%%%%%%%%%%%%%%%%%%%%%%%%%%%%%%%%%%%%%%%%%%
        {
          Convergence against number of degrees of freedom.
        }
      %%%%%%%%%%%%%%%%%%%%%%%%%%%%%%%%%%%%%%%%%%%%%%%%%%%%%%%%%%%%%%%% 
    }]{\hspace{-1cm}
      \includegraphics[scale=.45]{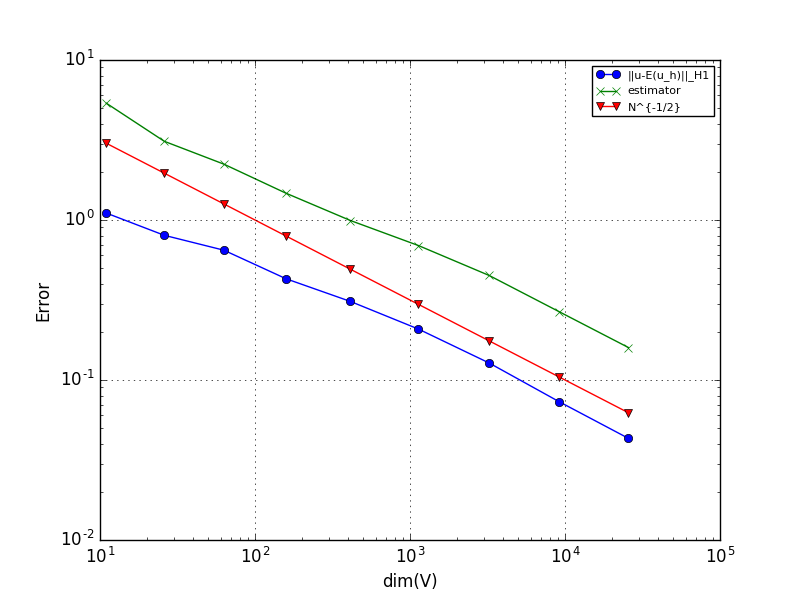}
    }    %%%%%%%%%%%%%%%%%%%%%%%%%%%%%%%%%%%%%%%%%%%%%%%%%%%%%%%%%%%%%%%%%%%
  %  \hfill
    \subfloat[{
        %%%%%%%%%%%%%%%%%%%%%%%%%%%%%%%%%%%%%%%%%%%%%%%%%%%%%%%%%%%%%%%
        {
          The effectivity index of the estimator.
        }
      %%%%%%%%%%%%%%%%%%%%%%%%%%%%%%%%%%%%%%%%%%%%%%%%%%%%%%%%%%%%%%%% 
    }]{\hspace{-1cm}
      \includegraphics[scale=.45]{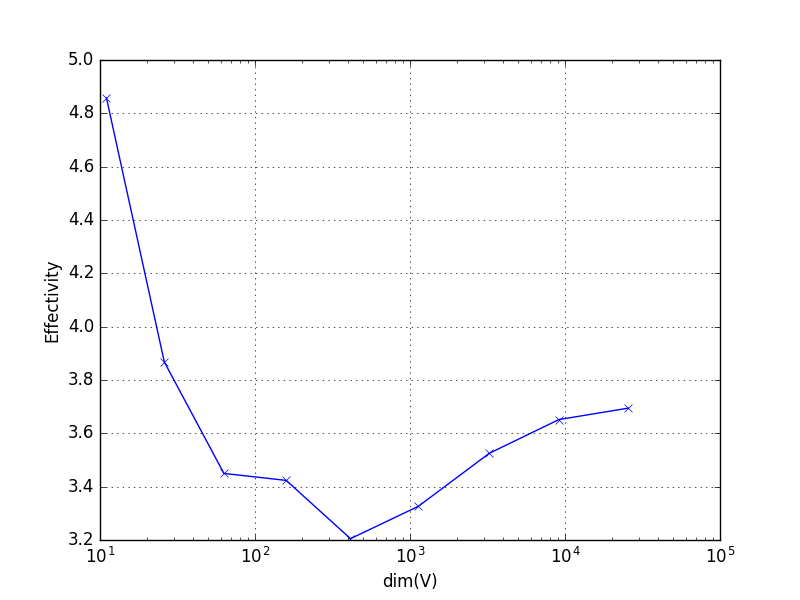}
    }    %%%%%%%%%%%%%%%%%%%%%%%%%%%%%%%%%%%%%%%%%%%%%%%%%%%%%%%%%%%%%%%%%%%
    \hfill
    %%%%%%%%%%%%%%%%%%%%%%%%%%%%%%%%%%%%%%%%%%%%%%%%%%%%%%%%%%%%%%%%%%%% 
    \subfloat[{
      %%%%%%%%%%%%%%%%%%%%%%%%%%%%%%%%%%%%%%%%%%%%%%%%%%%%%%%%%%%%%%% 
        {
          The initial mesh.
        }
        %%%%%%%%%%%%%%%%%%%%%%%%%%%%%%%%%%%%%%%%%%%%%%%%%%%%%%%%%%%%%%%% 
    }]{\hspace{0cm}
      \includegraphics[scale=.32]{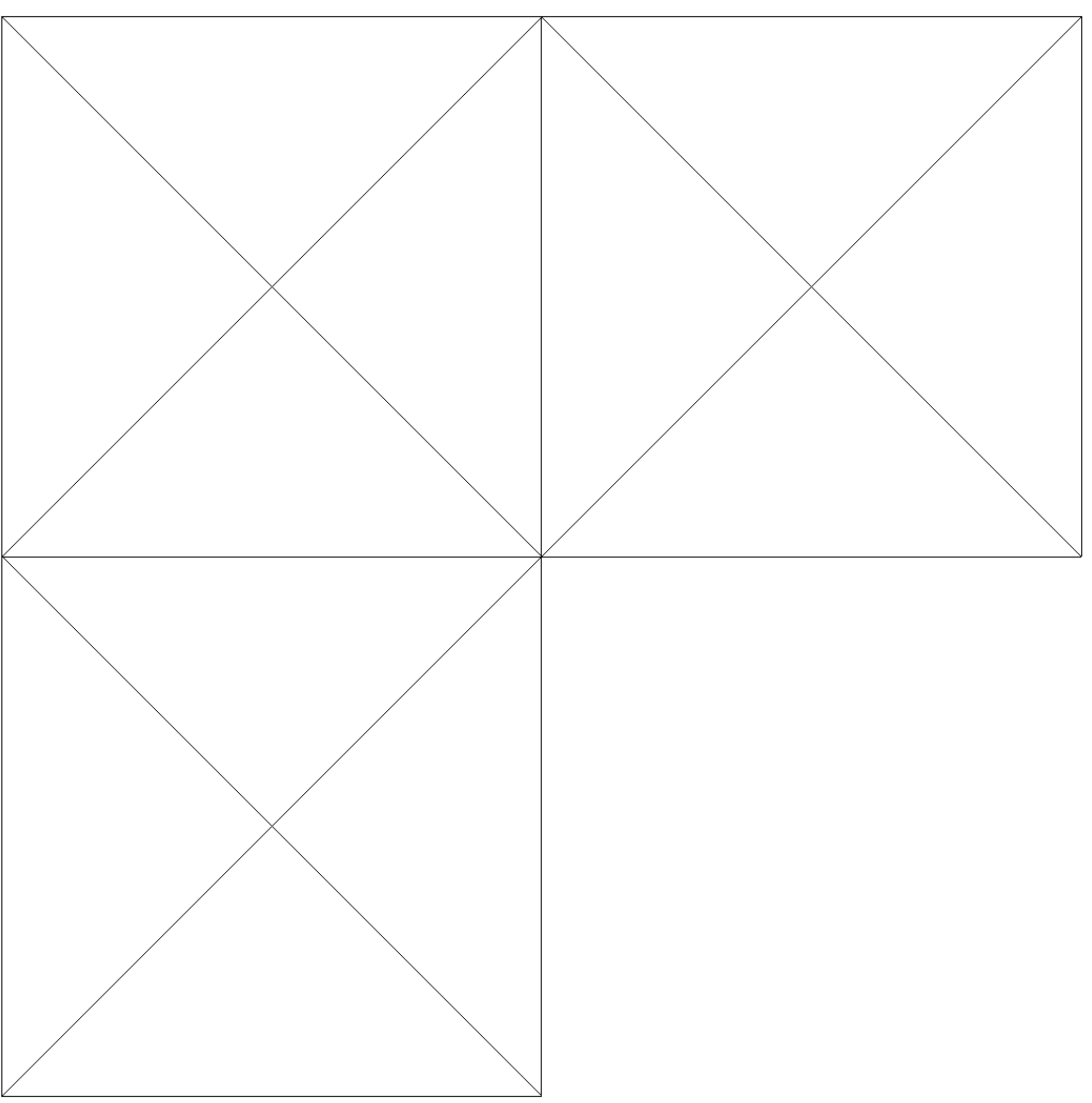}
    }   %%%%%%%%%%%%%%%%%%%%%%%%%%%%%%%%%%%%%%%%%%%%%%%%%%%%%%%%%%%%%%%%%%%
 %   \hfill
    %%%%%%%%%%%%%%%%%%%%%%%%%%%%%%%%%%%%%%%%%%%%%%%%%%%%%%%%%%%%%%%%%%%% 
    \subfloat[{
      %%%%%%%%%%%%%%%%%%%%%%%%%%%%%%%%%%%%%%%%%%%%%%%%%%%%%%%%%%%%%%% 
        {
          The mesh after 11 adaptive algorithm iterations.
        }
        %%%%%%%%%%%%%%%%%%%%%%%%%%%%%%%%%%%%%%%%%%%%%%%%%%%%%%%%%%%%%%%% 
    }]{\hspace{1cm}
      \includegraphics[scale=.33]{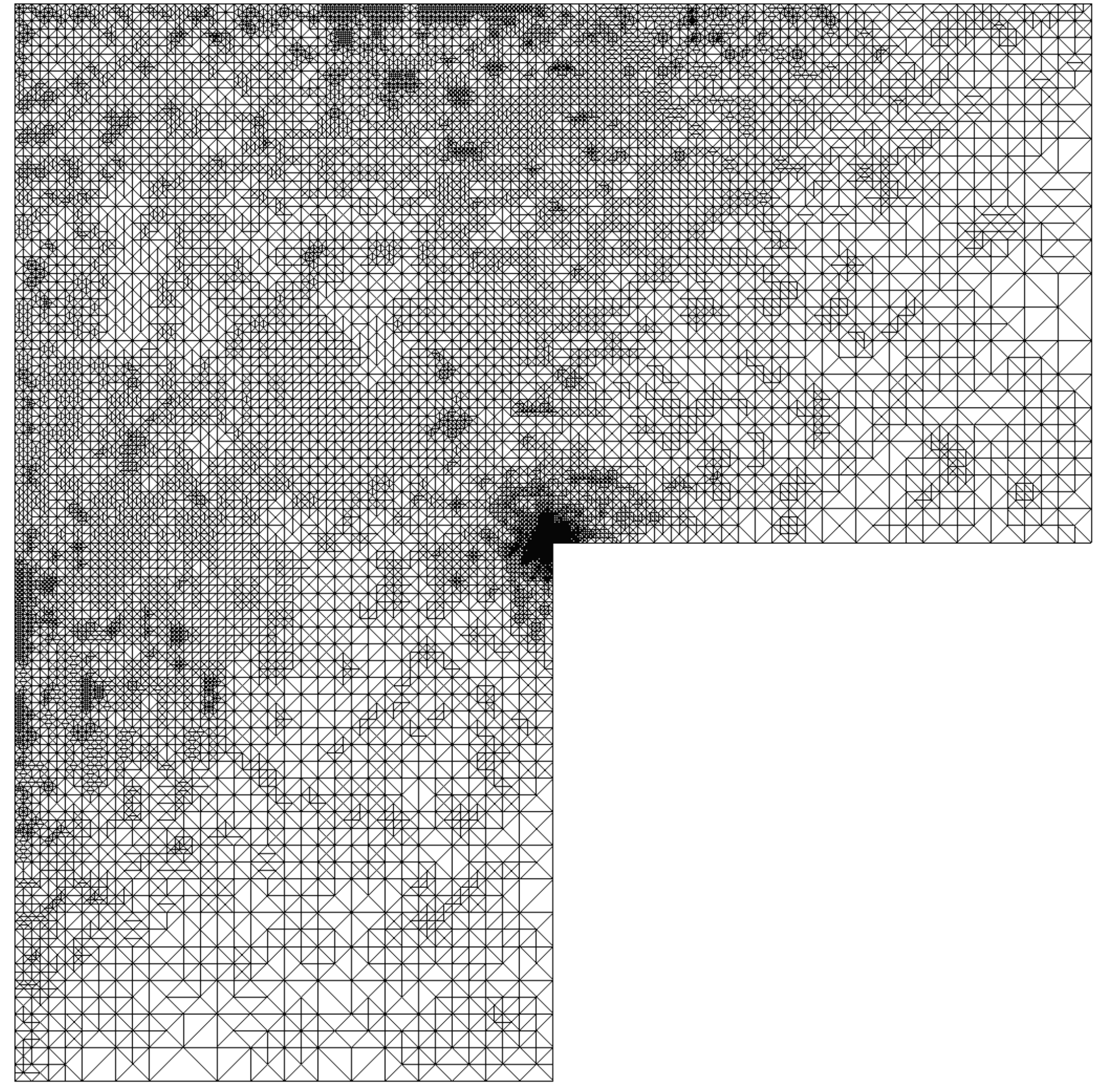}
    }
  \end{center}
\end{figure}

\newcommand{\E}{\mathcal E}

\subsection{Test 4 -- R-FEM for convection-diffusion problems}\label{sec:conv-diff}

To showcase the versatility of the R-FEM framework, we consider a convection-diffusion problem solved by a `hybrid' R-FEM discretisation for the second order term and an upwinded dG method for the first order term. We envisage such a combination of R-FEM with stabilised methods for stiff problems as a typical area of applications for R-FEM, aiming to combined the advantages of conforming (or classical non-conforming) discretisations with considerable freedom in the treatment of lower order terms. To this end, we consider the convection-diffusion problem
\begin{equation}\label{conv-diff}
  -\epsilon \Delta u + \vec w\cdot \nabla u  +cu = f,\qquad \text{on } \Omega,
\end{equation}
where $\mathbb{R}\ni \epsilon \ll 1$ represents viscosity and $\vec w\in C(\bar{\Omega})^d$ and $c\in L^2(\Omega)$ are some prescribed convection vector field and reaction coefficients, respectively. We supplement \eqref{conv-diff} with homogeneous Dirichlet (no-slip) boundary conditions on $\partial \Omega$.

The flexibility in the design of R-FEM described above, which employs element-wise discontinuous discrete spaces, allows naturally the treatment of the diffusion term in a conforming manner and, \emph{simultaneously}, the convection term in an upwinded nonconforming fashion. This allows for the construction of stable methods in the convection-dominated regime. To this end, consider the R-FEM discretisation: find $u_h \in V^0_h$ (with $\E(u_h)\in V^1_h \cap H^1_{0}(\Omega)$), such that:
\begin{equation}
  \label{eq:rem-upwind}
  \begin{aligned}
  &\int_\Omega \epsilon \nabla \E(u_h) \cdot \nabla \E(v_h) +c u_h v_h \ud x +\int_\Gamma \sigma \jump{u_h} \cdot \jump{v_h}\ud s \\
  +& \sum_{T\in\mathcal{T}}\Big(\int_T \vec w\cdot\nabla u_h v_h \ud x  + \int_{\partial_- T}( \vec w \cdot \vec n )\ujump{u_h} v_h^+\ud s \Big)
  =
  \int_\Omega f \E(v_h)\ud x \Foreach v_h \in V^0_h,
    \end{aligned}
\end{equation}
where $\ujump{v}(x):=\lim_{\delta\to 0^+}\big(v(x+\delta\vec w(x))-v(x-\delta\vec w(x))\big)$, for (almost every) $x\in \partial T$, is the \emph{upwind jump} and
\begin{equation}
  \partial_-T := \{ x\in\partial T : \vec w(x) \cdot \vec n (x) < 0 \},
\end{equation}
denotes the \emph{inflow boundary} of an element $T\in\mathcal{T}$; when $x\in \partial \Omega$, we set the respective downwind value to zero in the definition of $\ujump{\cdot}$. 

We shall investigate the convergence and stability properties of the proposed method \eqref{eq:rem-upwind} for $\Omega=(0,1)^2$ by setting
\begin{equation}
(a)\quad  \vec w = \big( (2x_2-1)(1-x_1^2), 2x_1 x_2 (x_2-1)\big)^{\transpose}
\quad \text{ and } \quad (b) \quad \vec w = (1,1)^{\transpose},
\end{equation}
and vary the diffusion parameter $\epsilon$ throughout the tests. Using the convection field $(a)$, we test the method's convergence properties for $\sigma$ given as in \eqref{sigma_rneqs} and $\mathcal{A}=\epsilon$; the  results are summarised in Figure \ref{fig:test-4}.

\begin{figure}[h!]
  \caption[]
  {\label{fig:test-4}
    %%%%%%%%%%%%%%%%%%%%%%%%%%%%%%%%%%%%%%%%%%%%%%%%%%%%%%%%%%%%%%%%%
    Convergence for a smooth solution of the convection-diffusion problem with coefficient $(a)$, for $\epsilon=10^{-1}$ and  $\epsilon=10^{-4}$.
    %%%%%%%%%%%%%%%%%%%%%%%%%%%%%%%%%%%%%%%%%%%%%%%%%%%%%%%%%%%%%%%%%% 
  }
  \begin{center}
        %%%%%%%%%%%%%%%%%%%%%%%%%%%%%%%%%%%%%%%%%%%%%%%%%%%%%%%%%%%%%%%%%%%% 
    \subfloat[{
        %%%%%%%%%%%%%%%%%%%%%%%%%%%%%%%%%%%%%%%%%%%%%%%%%%%%%%%%%%%%%%%
        {
          $\epsilon = 10^{-1}$
        }
      %%%%%%%%%%%%%%%%%%%%%%%%%%%%%%%%%%%%%%%%%%%%%%%%%%%%%%%%%%%%%%%% 
    }]{\hspace{-.5cm}
      \includegraphics[scale=\figscale,width=0.52\figwidth]{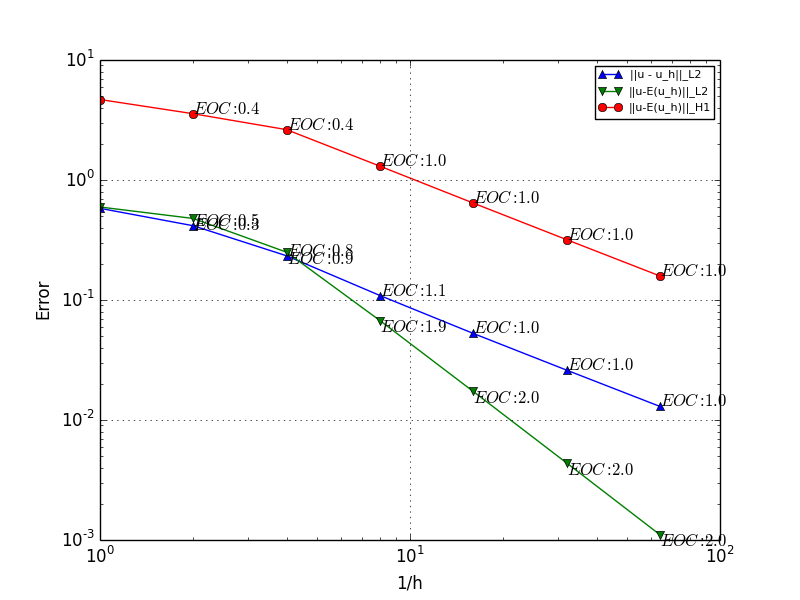}
    }    %%%%%%%%%%%%%%%%%%%%%%%%%%%%%%%%%%%%%%%%%%%%%%%%%%%%%%%%%%%%%%%%%%%
    \subfloat[{
      %%%%%%%%%%%%%%%%%%%%%%%%%%%%%%%%%%%%%%%%%%%%%%%%%%%%%%%%%%%%%%% 
        {
          $\epsilon = 10^{-4}$
        }
        %%%%%%%%%%%%%%%%%%%%%%%%%%%%%%%%%%%%%%%%%%%%%%%%%%%%%%%%%%%%%%%% 
    }]{
  \includegraphics[scale=\figscale,width=0.52\figwidth]{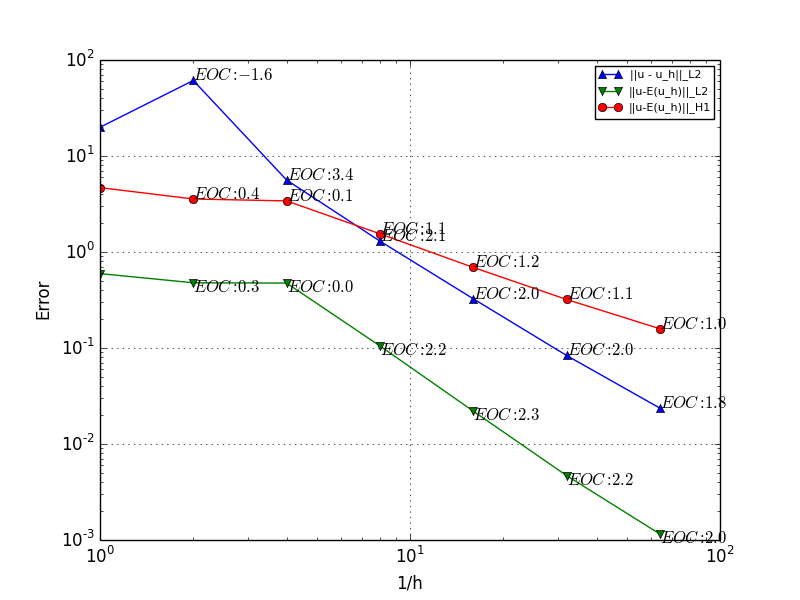}
    }
  \end{center}
  \vspace{-.6cm}
\end{figure}

To test the stability of the R-FEM approach, we consider \eqref{conv-diff} with the diagonal convection coefficient $(b)$ for small values of $\epsilon=10^{-2}, 10^{-3}$ on a quasi-uniform mesh with $h\approx 5\times 10^{-3}$. The exact solution to this problem admits a boundary layer of scale $\epsilon$ on the top and right boundaries. Therefore, the mesh is sufficiently fine to resolve the layer for $\epsilon=10^{-2}$ but not for $\epsilon=10^{-3}$. We compare the performance of R-FEM against a standard stable method, namely the (upwinded) interior penalty dG method which reads: find $u_h \in V^1_h$ such that
\begin{equation}\label{eq:dg-upwind}
  \begin{split}
    & \sum_{T\in\mathcal{T}}\Big(\int_T \epsilon \nabla u_h \cdot \nabla v_h + \vec w\cdot \nabla u_h  v_h +c u_h v_h\ud x+\int_{\partial_- T}( \vec w \cdot \vec n )\ujump{u_h} v_h^+\ud s \Big) \\
    &
    - \int_\Gamma \jump{u_h}\cdot \{\epsilon\nabla v_h \} + \jump{v_h}\cdot \{\epsilon\nabla u_h\}\ud s+\int_\Gamma \sigma  \jump{u_h} \cdot \jump{v_h} \ud s
    =
    \int_\Omega v_h \Foreach v_h \in V^1_h,
  \end{split}
\end{equation}
with $\sigma$ given by \eqref{sigma_reqs}.
Some representative solutions are given in Figure \ref{fig:ip}, using the recovered finite element method \eqref{eq:rem-upwind} (left plots) and the dG method \eqref{eq:dg-upwind} (right plots), respectively. We observe that R-FEM produces stable solution even in the non-resolving regime $\epsilon=10^{-4}$ including smooth profiles on the boundary layer. On the other hand, as expected, the dG solution is stable away from the layer and oscillates in the vicinity of the boundary layer. We can see, therefore, that R-FEM has the potential in delivering stable, conforming discretisations for stiff PDE problems.
\begin{figure}[]
  \caption[]
  {\label{fig:ip}
    %%%%%%%%%%%%%%%%%%%%%%%%%%%%%%%%%%%%%%%%%%%%%%%%%%%%%%%%%%%%%%%%%
    R-FEM and dG solutions over a uniform grid with $h\approx 0.005$ for two values of $\epsilon$.
    %%%%%%%%%%%%%%%%%%%%%%%%%%%%%%%%%%%%%%%%%%%%%%%%%%%%%%%%%%%%%%%%%% 
  }
  \begin{center}
    %%%%%%%%%%%%%%%%%%%%%%%%%%%%%%%%%%%%%%%%%%%%%%%%%%%%%%%%%%%%%%%%%%%% 
    \subfloat[{\label{fig:1b}
        %%%%%%%%%%%%%%%%%%%%%%%%%%%%%%%%%%%%%%%%%%%%%%%%%%%%%%%%%%%%%%% 
        {
          The R-FEM solution $\cE(u_h)$ with $\epsilon = 0.01$.
        }
        %%%%%%%%%%%%%%%%%%%%%%%%%%%%%%%%%%%%%%%%%%%%%%%%%%%%%%%%%%%%%%%% 
    }]{\hspace{-1cm}
      \includegraphics[scale=.23]{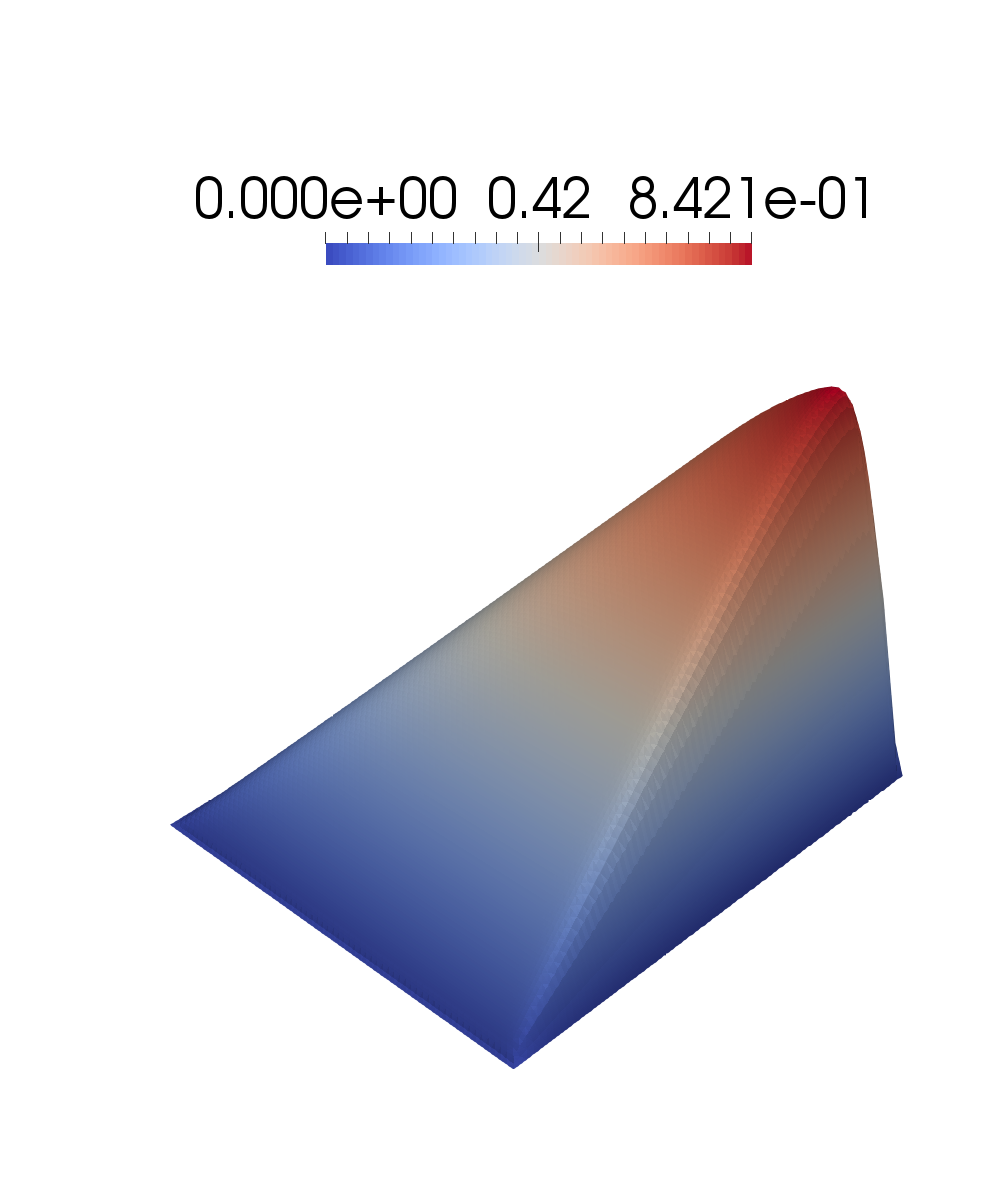}
    }
    %%%%%%%%%%%%%%%%%%%%%%%%%%%%%%%%%%%%%%%%%%%%%%%%%%%%%%%%%%%%%%%%%%%% 
    \subfloat[{\label{fig:1b}
        %%%%%%%%%%%%%%%%%%%%%%%%%%%%%%%%%%%%%%%%%%%%%%%%%%%%%%%%%%%%%%% 
        {
          The IP dG solution $u_h$ with $\epsilon = 0.01$.
        }
        %%%%%%%%%%%%%%%%%%%%%%%%%%%%%%%%%%%%%%%%%%%%%%%%%%%%%%%%%%%%%%%% 
    }]{
      \includegraphics[scale=.23]{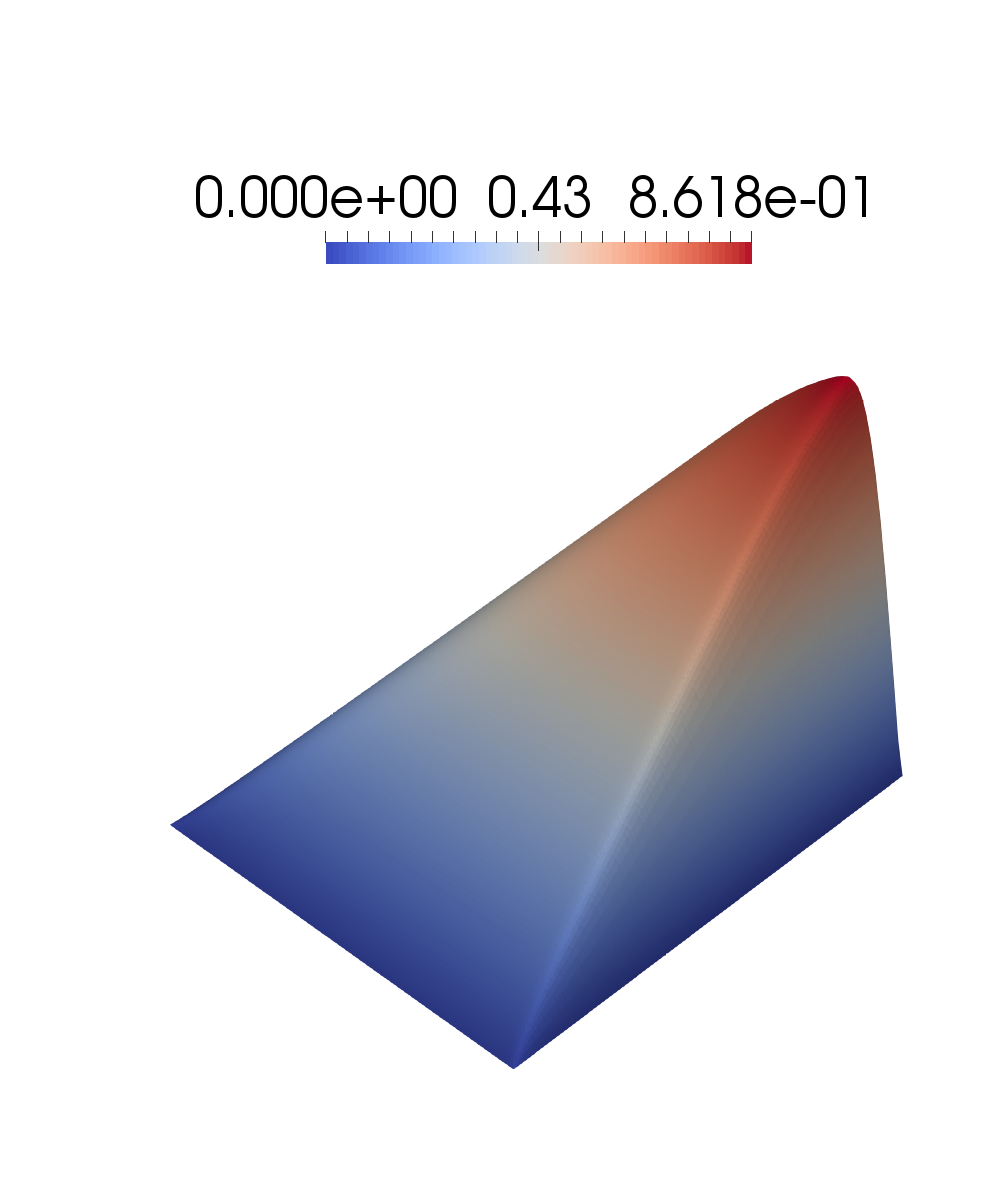}
    }
    \\
    %%%%%%%%%%%%%%%%%%%%%%%%%%%%%%%%%%%%%%%%%%%%%%%%%%%%%%%%%%%%%%%%%%%% 
    \subfloat[{\label{fig:1b}
        %%%%%%%%%%%%%%%%%%%%%%%%%%%%%%%%%%%%%%%%%%%%%%%%%%%%%%%%%%%%%%% 
        {
          The R-FEM solution $\cE(u_h)$ with $\epsilon = 0.001$.
        }
        %%%%%%%%%%%%%%%%%%%%%%%%%%%%%%%%%%%%%%%%%%%%%%%%%%%%%%%%%%%%%%%% 
    }]{\hspace{-1cm}
      \includegraphics[scale=.23]{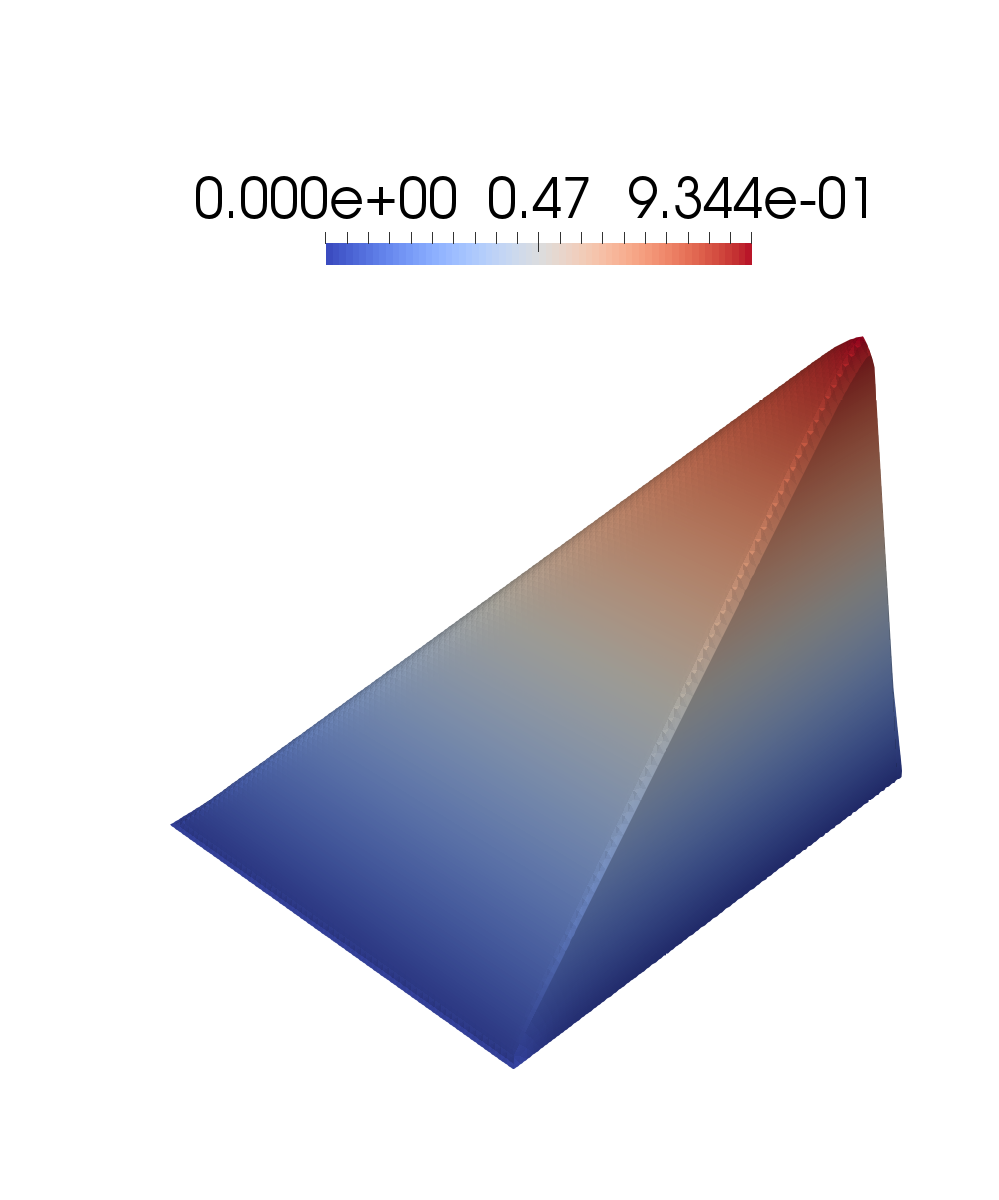}
    }
    %%%%%%%%%%%%%%%%%%%%%%%%%%%%%%%%%%%%%%%%%%%%%%%%%%%%%%%%%%%%%%%%%%%% 
    \subfloat[{\label{fig:1b}
      %%%%%%%%%%%%%%%%%%%%%%%%%%%%%%%%%%%%%%%%%%%%%%%%%%%%%%%%%%%%%%% 
        {
          The IP dG solution $u_h$ with $\epsilon = 0.001$.
        }
        %%%%%%%%%%%%%%%%%%%%%%%%%%%%%%%%%%%%%%%%%%%%%%%%%%%%%%%%%%%%%%%% 
    }]{
      \includegraphics[scale=.23]{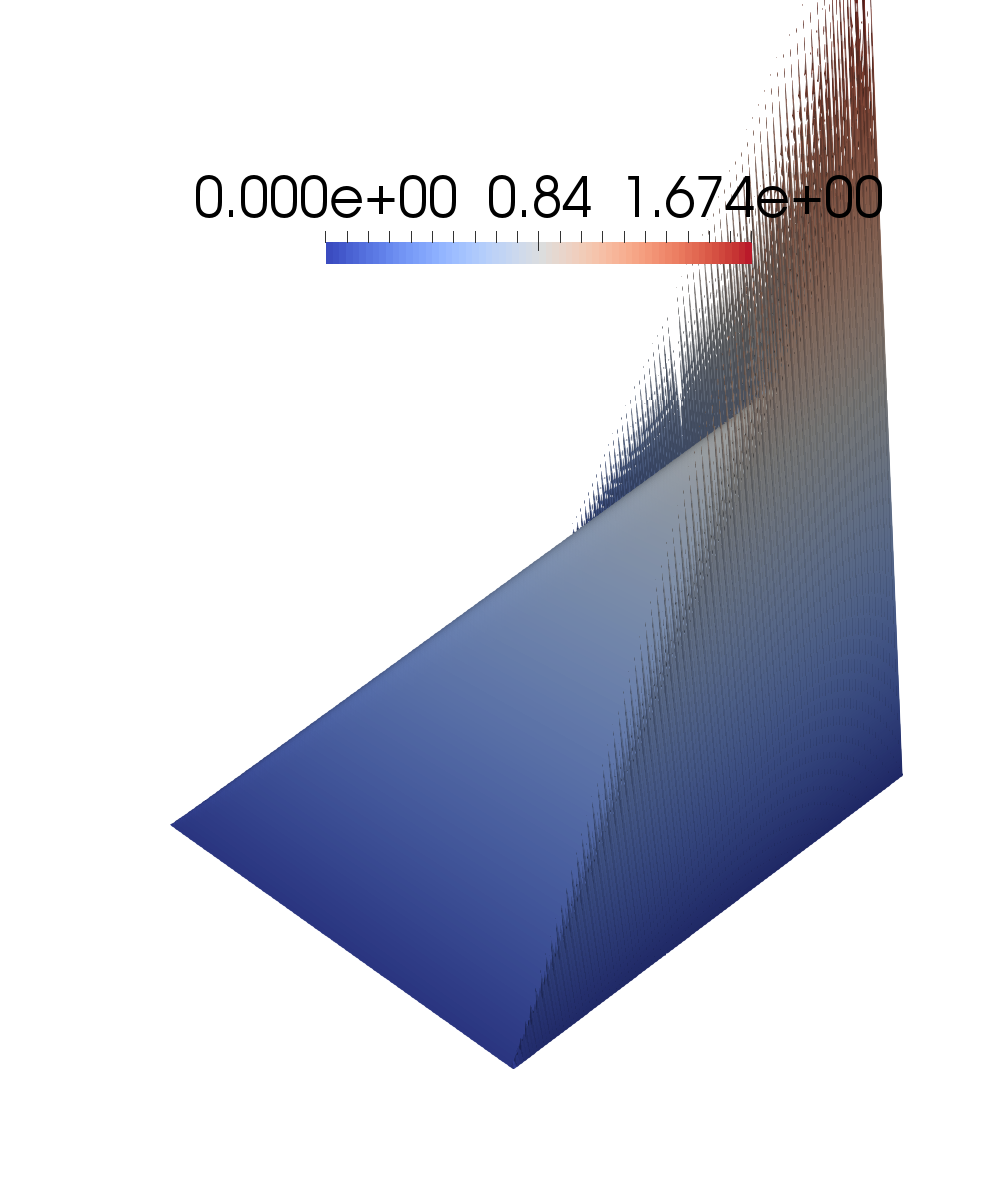}
    }
  \end{center}
\end{figure}

%% \clearpage

\section{Outlook and further applications}
 
In an effort to combine the advantages of discontinuous finite element spaces, yet produce conforming or classical non-conforming discretisations, we have introduced the framework of recovered finite element methods. We have demonstrated the convergence properties of R-FEM on standard simplicial and, in some instances, also on box-type meshes, we have proven an posteriori error bound and we have assessed the conditioning of the resulting stiffness matrices. As a first work on describing the R-FEM framework, we have refrained from describing advanced potential areas of applications in detail, in an effort to focus on the key ideas.  Nonetheless, R-FEM is envisaged to be successfully applicable to a number of directions. Here, we shall briefly comment on some of these.

\subsection{R-FEM for polytopic meshes}\label{ext:poly}
An important attribute of the definition of R-FEM \eqref{rem} is that the spaces $V_h^r$ and $V_h^s$ can differ. This idea has been used above in the context of recovering into different local polynomial degrees, yet on the \emph{same} mesh, aiming to improve on the convergence rate and/or the error per degree of freedom compared to respective conforming FEMs. However, it is by all means possible to consider spaces $V_h^r$ and $V_{\tilde{h}}^s$ subordinate to \emph{different} meshes. One natural application of this idea is the construction of R-FEM on general \emph{polygonal/polyhedral} (termed collectively henceforth as \emph{polytopic}) meshes, based on a recovery operator $\mathcal{E}:V_h^r\to V_{\tilde{h}}^s\cap H^1_0(\Omega)$, whereby $V_h^r$ is a discontinuous space subordinate to a polytopic mesh and $V_{\tilde{h}}^s\cap H^1_0(\Omega)$ is a conforming space subordinate to a (perhaps simplicial) finer mesh. This way once can construct \emph{conforming} approximations of elliptic problems on polytopic meshes where one has access to the whole of the approximate solution; this is in contrast to the recent virtual element framework \cite{virtual} whereby only certain functionals of the approximate solution are available to the user. This is a significant topic in its own right and will be discussed in detail in \cite{REMpoly}.

\subsection{R-FEM for high order PDEs}
The availability of classical $H^2$-conforming (e.g., Argyris or Hsieh-Clough-Tocher elements \cite{ciarlet})) has allowed for the construction of respective recovery operators $\mathcal{E}(w)\in H^2(\Omega)$ for $w\in V_h^r$ or $w\in V_h^r\cap H^1(\Omega)$ \cite{brenner,GHV}. Therefore, it is possible to construct R-FEM methods for biharmonic/plate problems. As this is also a  significant topic in its own right, it will be discussed elsewhere.

\subsection{Mixed R-FEM for problems with constraints}
Upon constructing suitable recovery operators, it is interesting to revisit the classical problem of numerical approximation of PDE problems with constraints, e.g., (nearly) incompressible elasticity, Darcy or viscous incompressible flows. Indeed, the flexibility in the underlying degrees of freedom offered by R-FEM in conjunction with suitable recovery operators, e.g., recovering onto classical inf-sup stable mixed finite element pairs is a potentially substantial direction of further research. For some first results in this context, we refer to \cite{REMmixed}.

\section*{Acknowledgements} We wish to express our sincere gratitude to Gabriel R.~Barrenechea (University of Strathclyde), Andrea Cangiani and Zhaonan (Peter) Dong (University of Leicester) for their insightful comments on an earlier version of this work, which lead to a clearer understanding of the method. EHG acknowledges funding by The Leverhulme Trust.

\bibliographystyle{siam}
\bibliography{bibliography}

\end{document}